\newtheorem*{theorem*}{Theorem}
\newtheorem{maintheorem}{Theorem}[section]
\newtheorem{theorem}{Theorem}[section]
\newtheorem{lemma}[theorem]{Lemma}
\newtheorem{proposition}[theorem]{Proposition}
\newtheorem{corollary}[theorem]{Corollary} 
\theoremstyle{definition}
\newtheorem{definition}[theorem]{Definition}
\newtheorem{example}[theorem]{Example}
\newtheorem{remark}[theorem]{Remark}
\newtheoremstyle{myitemstyle}						% flexible theorem style
	{}			%Space above
	{}			%Space below
	{}			%Body font
	{}			%indent amount
	{}			%Thm head font
	{.}			%Punkte nach thm head
	{ }			%Abstand nach thm head
	{}			%Thm head spec
\theoremstyle{myitemstyle}
\newtheorem{myitemthm}{}
\newcommand{\hooklongrightarrow}{\lhook\joinrel\longrightarrow}
\newcommand{\R}{\mathbb{R}}
\newcommand{\Rbar}{\overline{\mathbb{R}}}
\newcommand{\Z}{\mathbb{Z}}
\newcommand{\ZZ}{\mathbb{Z}}
\newcommand{\Q}{\mathbb{Q}}
\renewcommand{\C}{\mathbb{C}}
\newcommand{\CC}{\mathbb{C}}
\newcommand{\N}{\mathbb{N}}
\newcommand{\PP}{\mathbb{P}}
\newcommand{\NN}{\mathbb{N}}
\newcommand{\QQ}{\mathbb{Q}}
\newcommand{\A}{\mathbb{A}}
\renewcommand{\G}{\mathbb{G}}
\newcommand{\calB}{\mathcal{B}}
\newcommand{\frakf}{\mathfrak{f}}
\newcommand{\calO}{\mathcal{O}}
\newcommand{\calS}{\mathcal{S}}
\newcommand{\calV}{\mathcal{V}}
\DeclareMathOperator{\Spec}{Spec}
\DeclareMathOperator{\Hom}{Hom}
\DeclareMathOperator{\val}{val}
\DeclareMathOperator{\Trop}{Trop}
\DeclareMathOperator{\id}{id}
\DeclareMathOperator{\Span}{Span}
\DeclareMathOperator{\trop}{trop}
\DeclareMathOperator{\Proj}{Proj}
\DeclareMathOperator{\Rel}{Rel}
\DeclareMathOperator{\rk}{rk}
\DeclareMathOperator{\Quot}{Quot}
\DeclareMathOperator{\supp}{supp}
\DeclareMathOperator{\ev}{ev}
\newcommand{\deq}{\ensuremath{\stackrel{\textrm{def}}{=}}}
\newcommand{\rtl}{\ensuremath{\dashrightarrow}}
\newcommand{\equ}{\ensuremath{\,=\,}}
\newcommand{\dsubseteq}{\ensuremath{\,\subseteq\,}}
\title{Tropicalization of toric prevarieties} 
\date{}
\author{Alex K\"uronya}
\address{Institut f\"ur Mathematik, Goethe--Universit\"at Frankfurt,
%Robert-Mayer-Str. 6--8,
60325 Frankfurt am Main, Germany}
\email{kuronya@math.uni-frankfurt.de}
\author{Pedro Souza}
\address{Institut f\"ur Mathematik, Goethe--Universit\"at Frankfurt,
%Robert-Mayer-Str. 6--8,
60325 Frankfurt am Main, Germany}
\email{souza@math.uni-frankfurt.de}
\author{Martin Ulirsch}
\address{Institut f\"ur Mathematik, Goethe--Universit\"at Frankfurt,
%Robert-Mayer-Str. 6--8,
60325 Frankfurt am Main, Germany}
\email{ulirsch@math.uni-frankfurt.de}
\begin{document}

\maketitle

\begin{abstract}
The homogeneous spectrum of a multigraded finitely generated algebra (in the sense of Brenner-Schr\"oer) always admits an embedding into a toric variety that is not necessarily separated, a so-called \emph{toric prevariety}. In order to have a convenient framework to study the tropicalization of homogeneous spectra we propose a tropicalization procedure for toric prevarieties and study its basic properties. With these tools at hand, we prove a generalization of Payne's and Foster--Gross--Payne's tropical limit theorem for divisorial schemes.
\end{abstract}

\setcounter{tocdepth}{1}
\tableofcontents

%%%%%%%%%%%%%%%%%%%%%%%%%%%%%%%%%%%%%%%%%%%%%%%%%%%%%%

\section*{Introduction}

 Non-separated compactifications of moduli spaces appear very naturally in algebraic geometry; among other examples compactified Jacobians over semistable degenerations of algebraic curves come to mind. The upshot is that (possibly) non-separated compactifications often  have a chance to be a lot closer to the original moduli problem than compactifications that are forced to be separated. 

Tropical geometry, as initially introduced by Mikhalkin (see e.g.  \cite{Mikhalkin_ICM}), has become an important tool to study the combinatorial geometry of compactifications of moduli spaces (see in particular \cite{ACP, CCUW}). For example, in the case of compactified Jacobians the combinatorics of the different possible limits of line bundles in the special fiber can be recovered in terms of the chip-firing combinatorics of divisors on the dual graph of a semistable curve (see \cite{BakerJensen} and the references within as well as, in particular, \cite{AbreuPacini, MMUV} for a perspective on this topic in terms of universal Jacobians). 

The first step when studying the tropicalization of an algebraic variety $Y$ over a non-Archimedean field $K$ is to choose some form of local or global coordinates. The most classical way to do this is by finding a closed embedding $i$ of $Y$ into a toric variety $X$ with big torus $T$. Then this embedding allows us to take coordinate-wise valuations in order to get a polyhedral object, the \emph{tropicalization} 
\begin{equation*}
\Trop_X(Y)\deq\Trop(Y,i)
\end{equation*}
of $Y$ with respect to the embedding $i\colon Y\hookrightarrow X$. We refer the reader to \cite{MaclaganSturmfels, Gubler_guide} for more technical details in the case of algebraic tori and to \cite{Kajiwara_troptoric, Payne_anallimittrop} in the case of toric varieties. We, in particular, point the reader to \cite{Payne_anallimittrop} and \cite{FGP}, where the authors use the tropicalization with respect to embeddings into toric varieties to study the structure of Berkovich analytic spaces.

There are, however, many cases, in which an embedding into a toric variety is not-well understood. One aspect of this problem is that non-separated varieties do not admit closed embeddings into a toric variety and so the phenomenon of non-separatedness cannot be captured by the process of tropicalization described above. We point to \cite{BrennerSchroer, KU} for the theory of multihomogeneous spectra of multigraded rings that form a very natural and rich source of non-separated algebraic varieties.

In this article we intend to offer a remedy to this particular problem by developing a theory of tropicalization for (possibly non-separated) \emph{toric prevarieties} (as introduced and studied in \cite{KKMSD_toroidal} and \cite{ACampoNeuenHausen_toricpre}). 

The central idea is that every toric prevariety $X$ (with big torus $T$) admits an open cover by $T$-invariant open affine subsets $U_\sigma=\Spec K[S_\sigma]$ (for a rational polyhedral cone $\sigma$) on which the process of tropicalization is well-understood by \cite{Kajiwara_troptoric, Payne_anallimittrop}. The main task of this paper is to glue these local pictures in a way that respects the combinatorial structure of $X$ that is described in terms of so-called \emph{systems of fans} in \cite{ACampoNeuenHausen_toricpre}.

It turns out that, contrary to the situation of (separated) toric varieties, there are really two different ways of doing this: The first approach goes by using the natural stratification of the affine tropical toric variety $U_\sigma^{trop}=\Hom(S_\sigma,\Rbar)$ at infinity and gluing $X^{trop}$ the same way the strata of $X$ are glued algebraically. The other approach is to consider the non-negative part $U_\sigma^{trop,\geq 0}=\Hom(S_\sigma,\Rbar_{\geq 0})$ and by gluing along the faces of $\sigma$ (which naturally endows $X^{trop,\geq 0}$ with the structure of an extended cone complex in the sense of \cite{ACP, Ulirsch_functroplogsch}). We refer to $X^{trop}$ as a \emph{tropical toric prevariety} and to $X^{trop,\geq 0}$ as a \emph{non-negative tropical toric prevariety}. 

When $X$ is a (separated) toric variety, the non-negative tropicalization $X^{trop,\geq 0}$ of $X$ is a subspace of $X^{trop}$, namely the closure of the fan of $X$ in $X^{trop}$. Contrary to the usual tropicalization of a toric variety, the  non-negative tropicalization can be generalized to toroidal embeddings (see \cite{Thuillier_toroidal, Ulirsch_functroplogsch}) and therefore plays an important role when studying the tropical geometry of moduli spaces.

We shall see in Section \ref{section_tropicalization} below that there is also a natural continuous, proper and surjective \emph{tropicalization map} 
\begin{equation*}
\trop_{X}\colon X^{an}\longrightarrow X^{trop}
\end{equation*}
from the Berkovich analytic space $X^{an}$ associated to $X$ to $X^{trop}$. 

Any toric prevariety $X$ is the base change of a certain monoid scheme defined $\mathbb{Z}$; so we may also speak of toric prevarieties  over the valuation ring $R$ of $K$ base change (or any ring for that matter). This is not strictly speaking a "variety" in the classical sense, but we prefer this terminology to avoid clumsy phrases (as, for example, the term "toric scheme" is already taken, see \cite{KKMSD_toroidal, Gubler_guide}).

We will see in Section \ref{section_positivetropicalization} below that we have a natural continuous and surjective \emph{non-negative tropicalization map} 
\begin{equation*}
\trop_{X}^{\geq 0}\colon X^{\circ}\longrightarrow X^{trop,\geq 0}
\end{equation*}
from the Raynaud generic fiber $X^{\circ}$ of a toric prevariety $X$ over the valution ring $R$ to $X^{trop,\geq 0}$. The relationship between the two objects can be summarized by the commutative diagram 
\begin{equation*}\begin{tikzcd}
X^{\circ}\arrow[rr,"\trop"]\arrow[d]&&X^{trop_X^{\geq 0},\geq 0}\arrow[d]\\
X_K^{an}\arrow[rr,"\trop_X"]&& X^{trop} \ .
\end{tikzcd}\end{equation*}
Here the vertical arrows are the natural morphisms $X^{\circ}\rightarrow X_K^{an}$ (sending an $R'$-valued point, for a valuation ring $R'$ extending $R$, to its generic fiber) and $X^{trop,\geq 0}\rightarrow X^{trop}$ (constructed in Proposition \ref{prop_nonnegtroptotrop} below). Both vertical arrows are injective, as soon as $X$ is separated, and bijective, when $X$ is proper.

Given a possibly separated scheme $Y$ of finite type over $K$ as well as a (locally closed) embedding $i\colon Y\hookrightarrow X$ into a toric prevariety $X$ (over $K$), we may now define the \emph{tropicalization} of $Y$ with respect to the embedding $i$ by
\begin{equation*}
    \Trop(Y,i)\deq\trop_X\big(i(Y)^{an}\big) \ .
\end{equation*}
Similarly, given a possibly separated scheme $Y$ of finite type over $R$ as well as a (locally closed) embedding $i\colon Y\hookrightarrow X$ into a toric prevariety $X$ (over $R$), we define the \emph{positive tropicalization} of $Y$ with respect to the embedding $i$ by
\begin{equation*}
    \Trop^{\geq 0}(Y,i)\deq\trop_X^{\geq 0}\big(i(Y)^{\circ}\big)\ .
\end{equation*}

The restriction of this process to every torus orbit is the usual tropicalization in the sense of \cite{MaclaganSturmfels, Gubler_guide} and so, by the Bieri--Groves Theorem \cite{BieriGroves, KapranovEinsiedlerLind}, the intersection of $\Trop(Y,i)$ (or $\Trop_{\geq 0}(Y,i)$ respectively) with every stratum of $X^{trop}$ (or $X^{trop,\geq 0}$ respectively) carries a (possibly non-unique) structure of a polyhedral complex that is rational with respect to the value group of $K$.

The process of tropicalization constructed here is functorial with respect to toric morphisms of the surrounding toric prevariety and so it makes sense to consider systems of embeddings into toric prevarieties. In fact, expanding on \cite{Payne_anallimittrop, FGP}, we have the following:
\begin{maintheorem}\label{maintheorem}
Let $K$ be a complete non-Archimedean field and $R$ its valuation ring.
\begin{enumerate}[(i)] 
\item Let $Y$ be a divisorial scheme of finite type over  $K$. Then the tropicalization maps associated to all embeddings $i\colon Y\rightarrow X$ into a simplicial toric prevariety $X$ naturally induce a homeomorphism
\begin{equation*}
Y^{an}\xlongrightarrow{\sim}\varprojlim \Trop(Y,i)
\end{equation*}
between the Berkovich analytic space $Y^{an}$ and the projective limit over all tropicalizations $\Trop(Y,i)$. 
\item Let $Y$ be a divisorial scheme of finite type over  $R$. Then the non-negative tropicalization maps associated to all embeddings $i\colon Y\rightarrow X$ into a simplicial toric prevariety $X$ naturally induce a homeomorphism
\begin{equation*}
Y^\circ\xlongrightarrow{\sim}\varprojlim \Trop^{\geq 0}(Y,i)
\end{equation*}
between the Raynaud generic fiber $Y^\circ$ and the projective limit over all $\Trop^{\geq 0}(Y,i)$.
\end{enumerate}
\end{maintheorem}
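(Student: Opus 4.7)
The plan is to follow the strategy of Payne \cite{Payne_anallimittrop} and its divisorial extension by Foster--Gross--Payne \cite{FGP}, with the main new ingredient being the systematic use of toric prevarieties as ambient spaces. I focus on part (i); part (ii) proceeds in parallel, with $Y^{an}$ replaced by the Raynaud generic fiber $Y^\circ$ and $\trop_X$ replaced by the non-negative tropicalization $\trop_X^{\geq 0}$ developed in Section \ref{section_positivetropicalization}. The natural candidate map
\[
\Phi\colon Y^{an}\longrightarrow\varprojlim \Trop(Y,i)
\]
sends $x$ to the compatible family $\bigl(\trop_X\circ i^{an}(x)\bigr)_i$; it is well-defined and continuous by functoriality and continuity of tropicalization under toric morphisms of prevarieties, as set up earlier in the paper.

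Surjectivity of $\Phi$ follows from a standard filtered-intersection argument using properness of the tropicalization maps: for a compatible tuple $(p_i)\in\varprojlim\Trop(Y,i)$, the preimages $K_i\deq(\trop_X\circ i^{an})^{-1}(p_i)\subseteq Y^{an}$ are nonempty compact subsets that form a filtered family under refinement of embeddings, and their intersection is precisely the fiber of $\Phi$ over $(p_i)$.

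The heart of the proof is injectivity. Given distinct points $x_1,x_2\in Y^{an}$, I would produce an embedding $i\colon Y\hookrightarrow X$ into a simplicial toric prevariety such that $\trop_X$ separates their images. Let $y_1,y_2\in Y$ be the images of $x_1,x_2$ under the analytification map. Divisoriality of $Y$ guarantees an affine open $U=\Spec A$ containing $\{y_1,y_2\}$, so that $x_1,x_2\in U^{an}$. The affine case of the theorem, available from \cite{FGP}, provides an embedding $j\colon U\hookrightarrow\Spec K[S_\sigma]$ into an affine toric variety whose tropicalization separates $x_1$ and $x_2$. I then cover $Y$ by finitely many affine opens $U=U_1,\ldots,U_n$, choose an embedding of each $U_k$ into an affine toric variety $V_k=\Spec K[S_{\sigma_k}]$, and glue the $V_k$ into a toric prevariety $X$ along the overlaps $U_k\cap U_l$ via the systems-of-fans formalism of \cite{ACampoNeuenHausen_toricpre}. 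Refining the $\sigma_k$ as needed allows $X$ to be made simplicial.

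The continuous bijection $\Phi$ is then promoted to a homeomorphism because each component map $\trop_X\circ i^{an}$ is proper, so the induced map to the inverse limit is proper and hence closed. The main obstacle I foresee is the gluing step in the injectivity argument: one must arrange the local embeddings $U_k\hookrightarrow V_k$ to patch into a genuine morphism of schemes to a toric prevariety, with the overlaps $U_k\cap U_l$ matched via torus-equivariant transition maps on the intersections $V_k\cap V_l$. This is precisely where the freedom to allow $X$ to be non-separated is essential; in the separated toric-variety setting one would be forced to identify distinct cones of the ambient fan, which obstructs the construction in general and is exactly the reason the prevariety generalization is needed.
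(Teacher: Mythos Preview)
Your surjectivity argument via filtered intersections of compact fibers is essentially the paper's Proposition~\ref{thm_inj}, and is fine. The difficulties lie in injectivity and in the final homeomorphism step.

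First, a concrete gap: the claim that divisoriality of $Y$ furnishes an affine open containing both underlying points $y_1,y_2$ is false in the non-separated setting you are explicitly trying to cover. For instance, on the affine line with two origins the two origins lie in no common affine open. The paper handles this case separately (Proposition~\ref{thm_surj}): if $x_1\in V_k^{an}$ and $x_2\notin V_k^{an}$, one uses an embedding in which $V_k$ is the \emph{preimage of a torus-invariant open}, so that the tropicalizations land in different strata of $X^{trop}$ and are thereby distinguished. This requires having a global embedding of $Y$ with $V_k$ arising torus-invariantly, not just a local embedding of $V_k$.

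Second, and more seriously, the gluing step you flag as ``the main obstacle'' is not merely an obstacle: it does not go through as stated. Independently chosen closed embeddings $U_k\hookrightarrow V_k=\Spec K[S_{\sigma_k}]$ have no reason to match on overlaps via \emph{toric} isomorphisms of torus-invariant opens, which is what the systems-of-fans gluing of \cite{ACampoNeuenHausen_toricpre} requires. The paper avoids this entirely. Rather than patching local toric embeddings, it uses the Brenner--Schr\"oer multihomogeneous spectrum to produce, from an ample family on $Y$, a \emph{single global} locally closed embedding $i\colon Y\hookrightarrow X=\Proj_D R[T_1,\ldots,T_n]$ into a simplicial toric prevariety (Theorem~\ref{thm_divisorial=>embedding}). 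The affine opens $V_k$ are then automatically preimages of torus-invariant opens. To separate two points in the same $V_k^{an}$ via a regular function $f\in\calO_Y(V_k)$, one does not construct a new embedding from scratch: one lifts $f$ to a homogeneous element $\widetilde g\in S$, adjoins a single variable $x$ with $x\mapsto\widetilde g$ to obtain $S'=S[x]$, and embeds $Y\hookrightarrow X\hookrightarrow\Proj_D S'$. In this new embedding $f$ is the pullback of the monomial $x/h$, and the tropicalization then records $-\log|f|$. This is the content of condition~$(\ast)$ and its verification in the proof of Theorem~A.

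Finally, your homeomorphism argument via properness of each $\trop_X\circ i^{an}$ is not correct as stated: for a merely locally closed embedding $i$, this composite need not be proper. The paper instead shows directly (proof of Theorem~\ref{thm_productplus&star}) that the inverse of $\pi_{\mathcal I}$ is continuous, by observing that every evaluation map $\ev_f$ on $V_k^{an}$ factors through the projection to some $\Trop(Y,i)$ once $f$ is realized as the pullback of a monomial.
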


Our proof of Theorem \ref{maintheorem} mostly follows along the lines laid out in \cite{FGP} with one main exception. In order to construct a sufficient number of embeddings into toric varieties the authors of \cite{FGP} rely on some rather technical results coming from \cite{Wlodarczyk_toricembeddings}. We provide a more elementary alternative by working with the theory of multihomogeneous spectra of multigraded algebras, as developed in \cite{BrennerSchroer} and further expanded upon in \cite{KU}. The price to pay for this simplification is that, in general, we cannot work with closed embeddings into toric varieties, but only with locally closed embeddings into toric prevarieties, for which we develop the tropical machinery in this article.

\subsection*{Acknowledgements} The authors have been funded by the  Deutsche Forschungsgemeinschaft  (DFG, German Research Foundation) TRR 326 \emph{Geometry and Arithmetic of Uniformized Structures}, project number 444845124. A.~K. and M.~U. were  partially supported by the LOEWE grant ‘Uniformized Structures in Algebra and Geometry’. P.~S. acknowledges funding from the Deutscher Akademischer Austauschdienst (DAAD) under the scholarship program Forschungsstipendien - Promotionen in Deutschland, 2020/21 - No. 57507871.  M.~U. has also been funded by the Deutsche Forschungsgemeinschaft (DFG, German Research Foundation) - project number 456557832.

%%%%%%%%%%%%%%%%%%%%%%%%%%%%%%%%%%%%%%%%%%%%%%%%%%%%%%

\section{Toric prevarieties and systems of fans}

Let us first recall the description of the category of toric prevarieties in terms of systems of fans that was developed  by  A’Campo-Neuen and  Hausen in \cite{ACampoNeuenHausen_toricpre}. Here we begin by working over the complex numbers $\mathbb{C}$, toric prevarieties over arbitrary base rings are introduced in Definition \ref{def_otherbases} below.

\begin{definition}
A \emph{toric prevariety} is a normal integral scheme that is of finite type (but not necessarily separated) over $\CC$ together with an operation of an algebraic torus $T$, and the choice of a  point $x_0\in X$ such that $T\hookrightarrow X$ given by $t\mapsto t\cdot x_0$ is an open embedding. 
\end{definition}

A toric prevariety is a (normal) toric variety in the sense of \cite{Fulton_toric} or \cite{CoxLittleSchenk_toric} if and only it is separated. A morphism $f\colon X\rightarrow X'$ between toric prevarieties $(X,x_0)$ and $(X',x_0')$ is said to be a \emph{toric morphism}, if $f(x_0)=x_0'$ and there is a homomorphism $\phi\colon T\rightarrow T'$ of algebraic tori such that $f(t\cdot x)=\phi(t)\cdot f(x)$ for all $t\in T$ and $x\in X$. 

The category of (separated) toric varieties with toric varieties is equivalent to the category of (rational polyhedral) fans. In order to give a similar description for the category of toric prevarieties one needs to generalize this notion (see \cite[Section 2]{ACampoNeuenHausen_toricpre}). 

\begin{definition}
Let $N$ be a finitely generated free abelian group and write $N_\R$ for the vector space $N\otimes \R$. A collection $S=(\Delta_{ij})_{i,j\in I}$ of fans in $N_\R$ (indexed by a finite set $I$) is said to be a \emph{system of fans}, if 
\begin{enumerate}[(i)]
\item $\Delta_{ij}=\Delta_{ji}$ for all $i,j\in I$ and 
\item $\Delta_{ij}\cap\Delta_{jk}$ is a subfan of $\Delta_{ik}$ for all $i,j,k\in I$. 
\end{enumerate} 
\end{definition}

It immediately follows from these axioms that for all $i,j\in I$ the fan $\Delta_{ij}$ is a subfan of $\Delta_{ii}$. To a system of fans $S(\Delta_{ij})_{i,j\in I}$ we may naturally associate a toric prevariety $X=X(S)$ as follows: Consider the toric varieties $X_i=X(\Delta_{ii})$ for $i\in I$ and the open subvarieties $X_{ij}=X(\Delta_{ij})$ of $X_i$ respectively $X_j$ for $i,j\in I$ with $i\neq j$. By Axiom (i) we have a natural isomorphism $f_{ji}\colon X_{ij}\xrightarrow{\sim} X_{ji}$ and by Axiom (ii) we have $f_{ki}=f_{kj}\circ f_{ji}$ for all $i,j,k\in I$. Therefore we may glue the $X_i$ along the isomorphisms $X_{ij}\simeq X_{ji}$. This gluing is compatible with the respective operations of the torus $T:=N\otimes \C^\ast$ and therefore we obtain a toric prevariety $X(S)$. 

Every toric prevariety has two distinguished systems of charts: One is by maximal torus-invariant separated open subsets, the other by maximal torus-invariant affine open subsets. The latter corresponds to a system of fans $\calS$, in which every $\Delta_{ii}$ is a fan associated to a rational polyhedral cone. We refer to such systems of fans as \emph{affine} systems of fans.

By Sumihiro's Theorem every toric prevariety may be covered by finite many torus-invariant open affine subsets (see \cite[Prop. 1.3]{ACampoNeuenHausen_toricpre}). Therefore every toric prevariety is of the form $X(S)$ for a system of fans $S$. 

\begin{example}\label{example_A1twoorigins}
Let $I=\{1,2\}$ and consider the one-dimensional cone $\sigma=\mathbb{R}_{\geq 0}$. We define a system of fans $\calS$ in $\mathbb{R}$ by setting $\Delta_{11}\deq\Delta_{22}\deq\{\sigma,\{0\}\}$ and $\Delta_{12}\deq\Delta_{21}\deq\{\{0\}\}$. Then the corresponding toric prevariety $X(\calS)$ is given by taking two copies of $\A^1$, namely $X(\Delta_{11})$ and $X(\Delta_{22})$, and gluing them over $\G_m\subseteq \A^1$ via the identity $\G_m=X(\Delta_{12})=X(\Delta_{21})=\G_m$. So $X(\calS)$ is precisely the affine line with two origins.  
\end{example}

Let $S$ be a system of fans. Consider the set of pairs $(\sigma, i)$ consisting of a rational polyhedral cone $\sigma$ in $N_\R$ and an index $i\in I$ that fulfil $\sigma \in \Delta_{ii}$. Two such pairs $(\sigma, i)$ and $(\tau,j)$ are said to be \emph{equivalent} if $\sigma=\tau$ as cones and $\sigma=\tau\in\Delta_{ij}$. We write $\Omega(S)$ for the set of equivalence classes $[\sigma,i]$. There is a natural partial order on $\Omega(S)$ given by $[\tau,j]\preceq [\sigma,i]$ if and only if $\tau$ is a face of $\sigma$ and $[\tau,i]=[\tau,j]$.

\begin{definition}\label{def_morsysfan}
Let $S=(\Delta_{ij})_{i,j\in I}$ and $S'=(\Delta_{ij})_{i,j\in I'}$ be two system of fans in the vector space $N_\R$ and $N'_\R$ generated by a free finitely generated abelian groups $N$ and $N'$ respectively. A tuple consisting of a homomorphism $F\colon N\rightarrow N'$ and a map $\frakf\colon \Omega(S)\rightarrow \Omega(S')$ is said to be a \emph{morphism of systems of fans}, if 
\begin{enumerate}[(i)]
\item the map $\frakf$ is order preserving, i.e. whenever $[\tau,j]\preceq [\sigma,i]$ it follows that $\frakf([\tau,j])\preceq \frakf([\sigma,i])$, and 
\item whenever $f([\sigma,i])=[\sigma',i']$ then $F_\R(\sigma)\subseteq \sigma'$. 
\end{enumerate}
\end{definition}

For a system of fans $\calS$ we write $U_{[\sigma,i]}=U_\sigma=\Spec\mathbb{C}(S_\sigma)$ for the affine torus-invariant open subset associated to $[\sigma,i]\in\Omega(\calS)$. By Axiom (i) in Definition \ref{def_morsysfan} a morphism $(F,\frakf)\colon \calS\rightarrow \calS'$ of fans induces toric morphisms $U_{[\sigma,i]}\rightarrow U_{\frakf[\sigma,i]}$ that glue to a toric morphism $X(\calS)\rightarrow X(\calS')$ of toric prevarieties by Axiom (ii). 
This defines a functor from the category of systems of fans to the category of toric prevarieties with toric morphisms. In  \cite{ACampoNeuenHausen_toricpre} the authors have shown:

\begin{theorem}[\cite{ACampoNeuenHausen_toricpre} Theorem 3.6] The category of toric prevarieties with toric morphisms is equivalent to the category of systems of fans and to the full subcategory of affine systems of fans. 
\end{theorem}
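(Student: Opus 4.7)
The plan is to show that the already-constructed functor $\calS\mapsto X(\calS)$ from systems of fans to toric prevarieties is an equivalence of categories, and that its restriction to the full subcategory of affine systems of fans is also an equivalence. Both statements reduce to checking essential surjectivity (which is realized already on the affine subcategory) together with full faithfulness.

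For essential surjectivity, I would invoke Sumihiro's theorem (cited above as \cite[Prop.~1.3]{ACampoNeuenHausen_toricpre}): every toric prevariety $X$ with big torus $T$ is covered by finitely many maximal $T$-invariant affine open subsets $\{U_i\}_{i\in I}$, each of which is an affine toric variety $U_i=\Spec\CC[S_{\sigma_i}]$ for some cone $\sigma_i$ in $N_\R$. I would define $\Delta_{ii}$ to be the fan of faces of $\sigma_i$, and for $i\neq j$ let $\Delta_{ij}$ be the subfan of $\Delta_{ii}$ whose cones correspond to the $T$-invariant affine opens contained in $U_i\cap U_j$. The symmetry axiom is automatic from $U_i\cap U_j=U_j\cap U_i$, while the cocycle axiom follows from $(U_i\cap U_j)\cap(U_j\cap U_k)\subseteq U_i\cap U_k$. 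The standard glueing lemma for schemes then yields a $T$-equivariant isomorphism $X(\calS(X))\cong X$.

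For full faithfulness, a toric morphism $f\colon X(\calS)\to X(\calS')$ restricts on the open tori to a group homomorphism $T\to T'$, giving the lattice map $F\colon N\to N'$. To extract $\frakf\colon\Omega(\calS)\to\Omega(\calS')$, I would use the orbit-cone correspondence: every class $[\sigma,i]\in\Omega(\calS)$ labels a unique $T$-orbit $O_\sigma\subset X(\calS)$, and $T$-equivariance forces $f(O_\sigma)$ to lie in a single $T'$-orbit of $X(\calS')$, identifying a class $\frakf([\sigma,i])\in\Omega(\calS')$. Axiom~(ii) of Definition~\ref{def_morsysfan} then follows from the classical criterion that a toric morphism between affine toric varieties $U_\sigma\to U_{\sigma'}$ exists if and only if $F_\R(\sigma)\subseteq\sigma'$, while axiom~(i) follows from continuity of $f$ together with the fact that toric morphisms send orbit closures into orbit closures. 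Conversely, any morphism $(F,\frakf)$ determines local toric morphisms $U_{[\sigma,i]}\to U_{\frakf([\sigma,i])}$ that glue to a global morphism $X(\calS)\to X(\calS')$ by the argument already sketched right before the theorem.

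The main obstacle I anticipate is verifying that these local affine toric morphisms really do agree on the overlaps $U_{[\sigma,i]}\cap U_{[\tau,j]}$, so that the glueing produces a well-defined morphism; in the other direction, one must check that the $\frakf$ extracted from $f$ is well-defined on equivalence classes and not just on representatives $(\sigma,i)$. Both issues are forced by axioms~(i) and~(ii) for $\frakf$ combined with the equivalence relation identifying $(\sigma,i)\sim(\sigma,j)$ when $\sigma\in\Delta_{ij}$, but they require careful bookkeeping with how the strata $O_\sigma$ sit inside the various charts. Once this compatibility is established, the two constructions are mutually inverse at the morphism level, and the equivalence with the full subcategory of affine systems of fans is immediate from the fact that the system $\calS(X)$ produced above is itself affine.
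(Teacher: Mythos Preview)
The paper does not prove this theorem; it is stated purely as a citation of \cite[Theorem~3.6]{ACampoNeuenHausen_toricpre}, so there is no proof in the paper to compare your proposal against. Your sketch is the standard argument and matches what one finds in the cited reference: essential surjectivity via Sumihiro's theorem and the resulting affine torus-invariant cover, and full faithfulness by reading off $F$ from the restriction to the big torus and $\frakf$ from the orbit--cone correspondence together with the affine criterion $F_\R(\sigma)\subseteq\sigma'$. The two bookkeeping obstacles you flag (well-definedness of $\frakf$ on equivalence classes, and agreement of the local morphisms on overlaps) are precisely the points handled in \cite{ACampoNeuenHausen_toricpre}, and they are resolved along the lines you indicate.
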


Let $X=X(\calS)$ be a toric prevariety. By \cite[Theorem 4.1]{ACampoNeuenHausen_toricpre} there is a (separated) toric variety $\widetilde{X}$ together with a toric morphism $X\rightarrow \widetilde{X}$ that is initial among all toric morphisms into (separated) toric varieties. The toric variety $\widetilde{X}$ is unique up to unique toric isomorphism and is called the \emph{toric separation} of $X$. This construction allows to think of a toric prevariety $X$ as being constructed from a (separated) toric variety by torus-invariantly multiplying non-dense torus-orbits (e.g. in Example \ref{example_A1twoorigins} the affine line with two origins). 

 In this section toric prevarieties have been defined over $\CC$. Nevertheless all toric prevarieties naturally arise as a base change of a monoid scheme $X_\mathbb{Z}$ over $\Z$: for an affine toric variety $U_\sigma=\Spec \CC[S_\sigma]$ associated to a rational polyhedral cone $\sigma$ we have $U_{\sigma,\mathbb{Z}}=\Spec\Z[S_\sigma]$ and in general this scheme is given by gluing the $\Spec \Z[S_\sigma]$ according to the combinatorics of the system of fans defining $X$. 

\begin{definition}\label{def_otherbases}
Let $K$ an arbitrary field (or more generally an arbitrary ring $R$). A  \emph{toric prevariety} over $K$ (or $R$) is a base change to $K$ (or to $R$ respectively) of the monoid scheme $X_\mathbb{Z}$ associated to a toric prevariety $X$. 
\end{definition}

Similarly a toric morphism of toric prevarieties is also defined over $\ZZ$ and so it makes sense to refer to a base change of such a toric morphism as a \emph{toric morphism} of toric prevarieties over $K$ (or $R$) respectively. 

%%%%%%%%%%%%%%%%%%%%%%%%%%%%%%%%%%%%%%%%%%%%%%%%%%%%%%

\section{Tropical toric prevarieties}

\subsection{Tropical toric varieties}
We begin by recalling from \cite{Kajiwara_troptoric, Payne_anallimittrop} and, in particular, from \cite{Rabinoff} how to construct a tropical analogue of a toric variety. 

Write $\Rbar$ for the additive monoid $\R\cup\{\infty\}$ satisfying the rule $a+\infty =\infty$ for all $a\in\Rbar$. For a rational polyhedral cone $\sigma$ in $N_\R$ one can define the \emph{affine tropical toric variety} $U_\sigma^{trop}$ as a partial compactification of $N_\R$ by 
\begin{equation*}
U_\sigma^{trop}\deq\Hom(S_\sigma,\Rbar) \ .
\end{equation*}
The topology on $U_\sigma^{trop}$ is the weakest topology that makes all evaluation maps $u\mapsto u(s)$ for $s\in S_\sigma$ continuous. Since $S_\sigma$ is finitely generated, we may choose a finite generating set $s_1,\ldots, s_n$ of $S_\sigma$ and then $\Hom(S_\sigma,\Rbar)$ carries the subspace topology with respect to the embedding $\Hom(S_\sigma,\Rbar)\hookrightarrow \Rbar^n$ given by $u\mapsto (u(s_1),\ldots, u(s_n))$. 

For every monoid homomorphism $u\colon S_\sigma\rightarrow \Rbar$ the preimage of $\R\subseteq \Rbar$ is of the form $\tau^\perp\cap S_\sigma$ for some face $\tau$ of $\sigma$. So we naturally find a stratification 
\begin{equation}\label{eq_stratification}
\bigsqcup_{\tau \preceq \sigma} N_\R/\Span(\tau) \xlongrightarrow{\sim} U_\sigma^{trop}
\end{equation}
into locally closed subsets. Hereby an element $[u]\in N_\R/\Span(\tau)$ corresponds to the monoid homomorphism $S_\sigma\rightarrow \Rbar$ given by 
\begin{equation*}
s\longmapsto \begin{cases} u(s), & \textrm{if } s\in \tau^\perp\cap S_\sigma\\
\infty, & \textrm{else} \ 
\end{cases}
\end{equation*}
and this does not depend on the choice of representative of $[u]\in N_\R/\Span(\tau)$. 

This implies that for a face $\tau$ of $\sigma$ the monoid homomorphism $S_\sigma\rightarrow S_\tau$ induces an open embedding
\begin{equation}\label{eq_faceopenimmersion}
U_\tau^{trop}\hookrightarrow U_\sigma^{trop} \ ,
\end{equation}
whose image is the union of locally closed strata that corresponds to the faces of $\sigma$ that are also faces of $\tau$. 

\begin{definition}
Let $\Delta$ be a rational polyhedral fan in the vector space $N_\R$ spanned by the finitely generated free abelian group $N$ and denote by $X=X(\Delta)$ the associated toric variety. The \emph{tropical toric variety} associated to $\Delta$ is defined to be the colimit
\begin{equation*}
X^{trop}\deq\varinjlim_{\sigma\in \Delta} N_\R(\sigma) \ ,
\end{equation*}
taken over all cones $\sigma\in \Delta$ together with maps arising as in \eqref{eq_faceopenimmersion} from $\tau\preceq \sigma$. 
\end{definition}

\begin{example}
The tropical toric variety $\A^{2,trop}$ is given by $\Hom(\N^2,\Rbar)=\Rbar^2$, which is naturally stratified with strata $\R^2$, $\R\times\{\infty\}$, $\{\infty\}\times \R$, and $\{\infty\}\times \{\infty\}$. We may glue four copies of $\A^{2,trop}$ as indicated in Figure \ref{fig_P1xP1} to obtain the tropical toric variety $$(\PP^1\times\PP^1)^{trop}=(\R\cup\{\infty\}\cup\{-\infty\}\big)^2$$ with its nine locally closed strata.
\end{example}

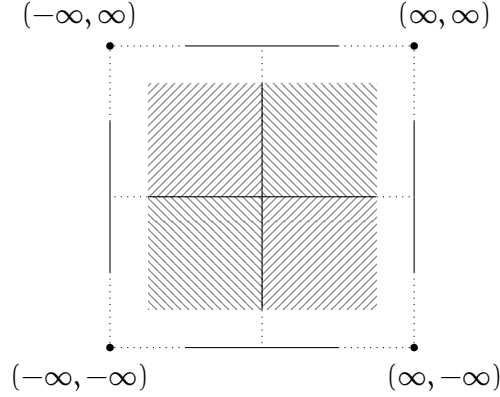
\begin{figure}[h]
\begin{tikzpicture}

\fill[pattern color=gray, pattern=north west lines] (0,0) rectangle (1.5,1.5);
\fill[pattern color=gray, pattern=north west lines] (-1.5,-1.5) rectangle (0,0);
\fill[pattern color=gray, pattern=north east lines] (-1.5,1.5) rectangle (0,0);
\fill[pattern color=gray, pattern=north east lines] (1.5,-1.5) rectangle (0,0);

\fill (2,2) circle (0.05 cm);
\fill (-2,-2) circle (0.05 cm);
\fill (2,-2) circle (0.05 cm);
\fill (-2,2) circle (0.05 cm);

\draw (0,0) -- (1.5,0);
\draw (0,0) -- (0,1.5);
\draw (2,0) -- (2,1);
\draw (0,2) -- (1,2);
\draw (0,0) -- (-1.5,0);
\draw (0,0) -- (0,-1.5);
\draw (-2,0) -- (-2,-1);
\draw (0,-2) -- (-1,-2);
\draw (-2,0) -- (-2,1);
\draw (0,-2) -- (1,-2);
\draw (2,0) -- (2,-1);
\draw (0,2) -- (-1,2);
 
\draw [dotted] (0,1) -- (0,2);
\draw [dotted] (1,0) -- (2,0);
\draw [dotted] (2,1) -- (2,2);
\draw [dotted] (1,2) -- (2,2);
\draw [dotted] (0,-1) -- (0,-2);
\draw [dotted] (-1,0) -- (-2,0);
\draw [dotted] (-2,-1) -- (-2,-2);
\draw [dotted] (-1,-2) -- (-2,-2);
\draw [dotted] (-2,1) -- (-2,2);
\draw [dotted] (-1,2) -- (-2,2);
\draw [dotted] (2,-1) -- (2,-2);
\draw [dotted] (1,-2) -- (2,-2);

\node at (2.4,2.4) {$(\infty,\infty)$};
\node at (-2.4,2.4) {$(-\infty,\infty)$};
\node at (2.4,-2.4) {$(\infty,-\infty)$};
\node at (-2.4,-2.4) {$(-\infty,-\infty)$};

\end{tikzpicture}\caption{The tropical toric variety 
$(\PP^1\times\PP^1)^{trop}$.}\label{fig_P1xP1}
\end{figure}

Let $\Delta$ and $\Delta'$ be two fans in $N_\R$ and $N_\R'$ respectively, giving rise to toric varieties $X=X(\Delta)$ and $X'=X'(\Delta')$. A homomorphism $F\colon N\rightarrow N'$ for which $F(\sigma)$ is contained in a cone in $\Delta'$ for all cone $\sigma\in \Delta$ induces a continuous map 
\begin{equation*}
F^{trop}\colon X^{trop}\longrightarrow (X')^{trop}
\end{equation*}
that respects the stratifications. 

\subsection{Tropical toric prevarieties}\label{section_troptoricpre}
We have seen in the previous section that a toric prevariety is defined by gluing toric varieties according to the combinatorial data of a system of fans. We now define a tropical toric prevariety by accordingly gluing tropical toric varieties along such data.

Let $X=X(S)$ be a toric prevariety defined by a system of fans $S=(\Delta_{ij})_{i,j\in I}$ and write $X_{ij}=X(\Delta_{ij})$ and $X_{i}=X_{ii}$. Since $\Delta_{ij}$ is a subfan of $\Delta_{ii}\cap\Delta_{jj}$, we have inclusion maps $X_{ij}\to X_{i}$ and $X_{ij}\to X_{j}$ which induce morphisms
\begin{equation*}
X_{ij}^{\trop}\longrightarrow X_{i}^{\trop}\qquad\textup{and}\qquad X_{ij}^{\trop}\longrightarrow X_{j}^{\trop}.
\end{equation*}
We now glue by the equivalence relation $\sim$ on the disjoint union
\begin{equation*}
\bigsqcup_{i\in I}X_{i}^{\trop}
\end{equation*}
defined by setting $f_{i}\sim f_{j}$ if and only if they both belong to $X_{ij}^{\trop}$ and $f_{i}=f_{j}$.

\begin{definition}
Let $S=(\Delta_{ij})_{i,j\in I}$ a system of rational polyhedral fans in $N_{\mathbb{R}}$ and let $X=X(S)$ be the corresponding toric prevariety. The \emph{tropical toric prevariety} associated to $X$ is defined to be
\begin{equation*}
X^{\trop}=\Big(\bigsqcup_{i\in I}X_{i}^{\trop}\Big)/\sim.
\end{equation*}
\end{definition}

From \eqref{eq_stratification} we immediately obtain a stratification
\begin{equation*}
    \bigsqcup_{[\sigma,i]\in\Omega(\calS)} N_\R/\Span(\sigma) \xlongrightarrow{\sim} X^{trop}
\end{equation*}
by locally closed subsets.

The construction of $X^{trop}$ is functorial with respect to toric morphisms. Indeed, by restricting a morphism $f\colon X\to X^{\prime}$ of toric prevarieties to $X_{i}$, we obtain a morphism $X_{i}\to X^{\prime}$ which is the gluing of its restrictions to open affine toric subvarieties of $X_{i}$. Since $f$ maps open affine toric subvarieties into open affine toric subvarieties, we have an induced map
\begin{equation*}
f_{i}^{\trop}\colon X_{i}^{\trop}\longrightarrow\bigsqcup_{i^{\prime}\in I^{\prime}}X_{i^{\prime}}^{\trop}
\end{equation*}
and these maps glue to a continuous map
\begin{equation*}
f^{\trop}\colon X^{\trop}\longrightarrow X^{\trop}
\end{equation*}
that respects the stratitifications.

One can also express $X^{\trop}$ as a colimit of tropical affine toric varieties as follows. For $[\sigma,i]\in\Omega(S)$, define $U_{[\sigma,i]}^{trop}=U_\sigma^{trop}=\Hom(S_\sigma,\Rbar)$. If $[\tau,j]\preceq [\sigma,i]$, then there is a map $U_{[\tau,j]}^{trop}\to U_{[\sigma,i]}^{trop}$ and they form a direct system. The maps $U_{[\sigma,i]}^{trop}\to X_{i}^{\trop}$ induce an isomorphism
\begin{equation*}
\varinjlim_{[\sigma,i]\in\Omega(S)}U_{[\sigma,i]}^{trop}\xlongrightarrow{\sim}X^{\trop}
\end{equation*}

\begin{example}\label{example_linetwooriginstrop}
Let $I=\{1,2\}$, consider the one-dimensional cone $\sigma=\mathbb{R}_{\geq 0}$, and the system of fans in $\mathbb{R}$ given by setting $\Delta_{11}=\Delta_{22}=\{\sigma,\{0\}\}$ and $\Delta_{12}=\Delta_{21}=\{\{0\}\}$ (as in Example \ref{example_A1twoorigins}). The associated tropical toric prevariety is obtained by gluing two copies of $(\mathbb{A}^{1})^{\trop}=\overline{\mathbb{R}}$ along $\mathbb{G}_{m}^{\trop}=\mathbb{R}$, thus obtaining the real line with two points at infinity (see Figure \ref{fig_A1twoorigins} below).
\end{example}

\begin{remark}\label{remark_F1}
In \cite{GiansiracusaGiansiracusa} the authors construct a tropicalization for all schemes that admit a model over $\mathbb{F}_1$, the so-called "field with one element". One may deduce from the theory in \cite{ACampoNeuenHausen_toricpre} that a toric prevariety $X$ naturally admits a model over $\mathbb{F}_1$, i.e. they arises as the base change of a monoid scheme to $\mathbb{C}$ (also see the discussion above Definition \ref{def_otherbases}). The approach of \cite{GiansiracusaGiansiracusa} would lead to a semiring scheme $X_\mathbb{T}$ over the tropical numbers, whose set of $\mathbb{T}$-valued points agrees with $X^{trop}$. 
\end{remark}

\subsection{Non-negative tropical toric prevarieties}
There is an alternative tropical analogue of a toric variety, the so-called \emph{non-negative tropicalization}, that seems to have appeared first in \cite{PopescuPampuStepanov} and naturally lends itself to generalizations to toroidal embeddings \cite{Thuillier_toroidal, ACP} and logarithmic schemes \cite{Ulirsch_functroplogsch}. 

Write $\Rbar_{\geq 0}$ for the submonoid of $\Rbar$ consisting only of non-negative elements. For a rational polyhedral cone $\sigma$ in $N_\R$ one can define the \emph{non-negative affine tropical toric variety} $U_\sigma^{trop,\geq 0}$  by 
\begin{equation*}
U_\sigma^{trop,\geq 0}\deq\Hom(S_\sigma,\Rbar_{\geq 0}) \ .
\end{equation*}
The topology on $U_\sigma^{trop,\geq 0}$ is the weakest topology that makes all evaluation maps $u\mapsto u(s)$ for $s\in S_\sigma$ continuous. Again, since $S_\sigma$ is finitely generated, we may choose a finite generating set $s_1,\ldots, s_n$ of $S_\sigma$ and then $\Hom(S_\sigma,\Rbar)$ carries the subspace topology with respect to the embedding $\Hom(S_\sigma,\Rbar_{\geq 0})\hookrightarrow \Rbar^n_{\geq 0}$ given by $u\mapsto (u(s_1),\ldots, u(s_n))$. The non-negative affine tropical toric variety $U_\sigma^{trop,\geq 0}$ is a canonical compactification of the cone $\sigma$ via the open embedding
\begin{equation}\label{eq_stratificationsnonneg}
    \sigma=\Hom(S_\sigma,\R_{\geq 0})\hooklongrightarrow \Hom(S_\sigma,\Rbar_{\geq 0})=U_\sigma^{trop,\geq 0} \ .
\end{equation}

For every monoid homomorphism $u\colon S_\sigma\rightarrow \Rbar_{\geq 0}$ the preimage of $\R_{\geq 0}\subseteq \Rbar_{\geq 0}$ is of the form $\tau^\perp\cap S_\sigma$ for some face $\tau$ of $\sigma$. So we naturally find a stratification 
\begin{equation*}
\bigsqcup_{\tau \preceq \sigma} \sigma/\tau \xlongrightarrow{\sim} U_\sigma^{trop,\geq 0}
\end{equation*}
into locally closed subsets. Here an element $[u]\in \sigma/\tau$ corresponds to the monoid homomorphism $S_\sigma\rightarrow \Rbar_{\geq 0}$ given by 
\begin{equation*}
s\longmapsto \begin{cases} u(s), & \textrm{if } s\in \tau^\perp\cap S_\sigma\\
\infty, & \textrm{else} \ 
\end{cases}
\end{equation*}
and this does not depend on the choice of representative of $[u]\in \sigma/\tau$. 

This implies that for a face $\tau$ of $\sigma$ the monoid homomorphism $S_\sigma\rightarrow S_\tau$ induces a closed embedding
\begin{equation}\label{eq_faceclosedimmersion}
U_\tau^{trop,\geq 0}\hooklongrightarrow U_\sigma^{trop,\geq 0} \ ,
\end{equation}
whose image is the closure of the face $\tau$ in $U_\sigma^{\trop,\geq 0}$. 

\begin{definition}
Let $\Delta$ be a rational polyhedral fan in the vector space $N_\R$ spanned by the finitely generated free abelian group $N$ and denote by $X=X(\Delta)$ the associated toric variety. The \emph{non-negative tropical toric variety} associated to $\Delta$ is defined to be the colimit
\begin{equation*}
X^{trop, \geq 0}\deq\varinjlim_{\sigma\in \Delta} U_\sigma^{trop,\geq 0} \ ,
\end{equation*}
taken over all cones $\sigma\in \Delta$ together with maps arising as in \eqref{eq_faceclosedimmersion} from $\tau\preceq \sigma$. 
\end{definition}

The non-negative tropical toric variety $X^{trop,\geq 0}$ naturally carries the structure of an extended cone complex in the sense of \cite{ACP, Ulirsch_functroplogsch}. From \eqref{eq_stratificationsnonneg} we immediately obtain a stratification
\begin{equation*}
    \bigsqcup_{\sigma\in\Delta} \sigma/\tau \xlongrightarrow{\sim} X^{trop,\geq 0}
\end{equation*}
by locally closed subsets.

Let $\Delta$ and $\Delta'$ be two fans in $N_\R$ and $N_\R'$ respectively, giving rise to toric varieties $X=X(\Delta)$ and $X'=X'(\Delta')$. A homomorphism $F\colon N\rightarrow N'$ for which $F(\sigma)$ is contained in a cone in $\Delta'$ for all cone $\sigma\in \Delta$ induces a continuous map 
\begin{equation*}
F^{trop,\geq 0}\colon X^{trop,\geq 0}\longrightarrow (X')^{trop,\geq 0}
\end{equation*}
that respects the stratifications. 

As in Section \ref{section_troptoricpre} above, we may define a \emph{non-negative tropical toric prevariety} by gluing according to the combinatorics of a system of fans. 

Let $X=X(S)$ be a toric prevariety defined by a system of fans $S=(\Delta_{ij})_{i,j\in I}$ and write $X_{ij}=X(\Delta_{ij})$ and $X_{i}=X_{ii}$. Since $\Delta_{ij}$ is a subfan of $\Delta_{ii}\cap\Delta_{jj}$, we have inclusion maps $X_{ij}\to X_{i}$ and $X_{ij}\to X_{j}$ which induce morphisms
\begin{equation*}
X_{ij}^{\trop,\geq 0}\longrightarrow X_{i}^{\trop,\geq 0}\qquad\textup{and}\qquad X_{ij}^{\trop,\geq 0}\longrightarrow X_{j}^{\trop,\geq 0}.
\end{equation*}
As above, we now glue by the equivalence relation $\sim$ on the disjoint union
\begin{equation*}
\bigsqcup_{i\in I}X_{i}^{\trop,\geq 0}
\end{equation*}
defined by setting $f_{i}\sim f_{j}$ if and only if they both belong to $X_{ij}^{\trop,\geq 0}$ and $f_{i}=f_{j}$.

\begin{definition}
Let $S=(\Delta_{ij})_{i,j\in I}$ a system of rational polyhedral fans in $N_{\mathbb{R}}$ and let $X=X(S)$ be the corresponding toric prevariety. The \emph{non-negative tropical toric prevariety} associated to $X$ is defined to be
\begin{equation*}
X^{\trop,\geq 0}\deq\Big(\bigsqcup_{i\in I}X_{i}^{\trop,\geq 0}\Big)/\sim.
\end{equation*}
\end{definition}

The construction of $X^{trop,\geq 0}$ is functorial. Indeed, by restricting a morphism $f\colon X\to X^{\prime}$ of toric prevarieties to $X_{i}$, we obtain a morphism $X_{i}\to X^{\prime}$ which is the gluing of its restrictions to open affine toric subvarieties of $X_{i}$. Since $f$ maps open affine toric subvarieties into open affine toric subvarieties, we have an induced map
\begin{equation*}
f_{i}^{\trop,\geq 0}\colon X_{i}^{\trop,\geq 0}\longrightarrow\bigsqcup_{i^{\prime}\in I^{\prime}}X_{i^{\prime}}^{\trop,\geq 0}
\end{equation*}
and these morphisms glue to
\begin{equation*}
f^{\trop,\geq 0}\colon X^{\trop,\geq 0}\longrightarrow X^{\trop,\geq 0}.
\end{equation*}

One can also express $X^{\trop,\geq 0}$ as a colimit of non-negative tropical affine toric varieties as follows. For $[\sigma,i]\in\Omega(S)$, define $U_{[\sigma,i]}^{trop,\geq 0}\deq U_\sigma^{\trop,\geq 0}$. If $[\tau,j]\preceq [\sigma,i]$, then there is a map $U_{[\tau,j]}^{trop,\geq 0}\to U_{[\sigma,i]}^{trop,\geq 0}$ and they form a direct system. The maps $U_{[\sigma,i]}^{\trop,\geq 0}\to X_{i}^{\trop,\geq 0}$ induce a homeomorphism
\begin{equation*}
\varinjlim_{[\sigma,i]\in\Omega(S)}U_{[\sigma,i]}^{\trop,\geq 0}\xlongrightarrow{\sim}X^{\trop,\geq 0}
\end{equation*}

\begin{example}\label{example_linewithtwooriginstropnonneg}
Let us consider again $X$ the affine line with two origins, viewed as a toric prevariety as in Example \ref{example_A1twoorigins} and Example \ref{example_linetwooriginstrop}. The corresponding non-negative tropical toric prevariety $X^{\trop,\geq 0}$ is obtained by gluing two copies of $U_{\sigma}^{\trop,\geq 0}=\overline{\mathbb{R}}_{\geq 0}$ along $U_{\{0\}}^{\trop,\geq 0}=\{0\}$ (see Figure \ref{fig_A1twoorigins} below).
\end{example}

\begin{figure}[h]
    \begin{tikzpicture}
        \draw (1,2) -- (2.5,2.375);
        \draw (1,2) -- (2.5,1.625);
        \draw[dotted] (2.5,2.375) -- (3,2.5);
        \draw[dotted] (2.5,1.625) -- (3,1.5);
        \fill (3,2.5) circle (0.05cm);
        \fill (3,1.5) circle (0.05cm);

        \draw[->] (2,1.25) -- (2,0.75);
    
        \draw (0,0) -- (2.5,0);
        \draw[dotted] (2.5,0) -- (3,0);
        \fill (3,0.5) circle (0.05cm);
        \fill (3,-0.5) circle (0.05cm);
        \fill (1,0) circle (0.05cm);
        
        \node at (1,0.4) {$0$};
        \node at (1,2.4) {$0$};
        \node at (3.4,-0.5) {$\infty_2$};
        \node at (3.4,0.5) {$\infty_1$};
        \node at (3.4,2.5) {$\infty_1$};
        \node at (3.4,1.5) {$\infty_2$};
        
    \end{tikzpicture}
    \caption{The non-negative tropical affine line $X^{trop,\geq 0}$ with two origins mapping to the tropical affine line $X^{trop}$ with two origins.}
    \label{fig_A1twoorigins}
\end{figure}
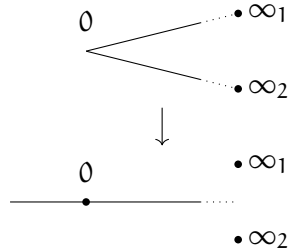

In this example, we already see that there is a natural map $X^{\trop,\geq 0}\to X^{\trop}$ which is not injective (see Proposition \ref{prop_nonnegtroptotrop} below for details).

\begin{remark}
Again, as in Remark \ref{remark_F1}, one may think of $X$ as being defined over $\mathbb{F}_1$. Taking a base change $X_{\mathcal{O}_\mathbb{T}}$ to the semiring  $\calO_{\mathbb{T}}$ of non-negative tropical numbers, we recover the non-negative tropicalization as the set of $\mathcal{O}_\mathbb{T}$-valued points of $X_{\mathcal{O}_\mathbb{T}}$. (see \cite{Lorscheid} for more details on this in the language of  \emph{blueprints}).
\end{remark}

\begin{remark}
One may think of a toric prevariety as a logarithmic scheme with respect to its boundary. When $K$ is endowed with the trivial absolute value, the tropicalization map constructed in Proposition \ref{prop_nonnegtroptoricpre} is exactly the tropicalization map for logarithmic schemes constructed in \cite{Ulirsch_functroplogsch}.
\end{remark}

\subsection{Comparison between $X^{trop}$ and $X^{trop,\geq 0}$}
For an affine toric variety $U_\sigma$ associated to a rational polyhedral cone $\sigma\subseteq N_\R$, there is a natural continuous inclusion map 
\begin{equation}\label{eq_nonnegtroptotropaffine}
    U_\sigma^{trop,\geq 0}=\Hom(S_\sigma,\Rbar_{\geq 0})\hooklongrightarrow \Hom(S_\sigma,\Rbar)=U_\sigma^{trop}
\end{equation}
that naturally embeds $U_\sigma^{trop,\geq 0}$ as the topological closure of $\sigma$ in $U_\sigma^{trop}$ and respects the stratifications.

\begin{proposition}\label{prop_nonnegtroptotrop}
Let $X$ be a toric prevariety defined by a system of fans $S=(\Delta_{ij})_{i,j\in I}$. Then there is a unique continuous map \begin{equation*}
    X^{trop,\geq 0}\longrightarrow X^{trop}
\end{equation*}
that restricts to \eqref{eq_nonnegtroptotropaffine} for all rational polyhedral cones in all $\Delta_{ij}$. This map is injective if and only if $X$ is separated and bijective if and only if $X$ is proper.
\end{proposition}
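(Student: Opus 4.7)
The plan is to construct the map chart-by-chart via the colimit descriptions and then reduce both the injectivity and bijectivity statements to the toric variety case, using the fact that a separated toric prevariety is a toric variety.

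For the construction, the natural inclusion $U_\sigma^{trop,\geq 0} \hookrightarrow U_\sigma^{trop}$ from \eqref{eq_nonnegtroptotropaffine} trivially commutes with both the open face embeddings \eqref{eq_faceopenimmersion} and the closed face embeddings \eqref{eq_faceclosedimmersion}: both kinds of face maps are induced by the localization $S_\sigma \to S_\tau$, while \eqref{eq_nonnegtroptotropaffine} is induced by $\Rbar_{\geq 0} \hookrightarrow \Rbar$. Thus, as $[\sigma,i]$ ranges over $\Omega(S)$, these inclusions assemble into a morphism of direct systems whose colimits are $X^{trop,\geq 0}$ and $X^{trop}$, producing the desired continuous map; uniqueness is immediate since the affine charts $U_{[\sigma,i]}^{trop,\geq 0}$ cover $X^{trop,\geq 0}$.

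For injectivity, if $X$ is separated then $X = X(\Delta)$ is a toric variety, and as noted in the introduction $X^{trop,\geq 0}$ identifies with the closure of $|\Delta|$ inside $X^{trop}$; in particular the map is injective. Conversely, suppose $X$ is not separated: there exist $i,j \in I$ and a cone $\sigma \in \Delta_{ii} \cap \Delta_{jj}$ with $\sigma \notin \Delta_{ij}$, and necessarily $\sigma \neq \{0\}$ since the zero cone lies in every subfan. Choose $u \in U_\sigma^{trop,\geq 0}$ whose associated homomorphism corresponds to a point in the relative interior of $\sigma$. The two copies $(u,i)$ and $(u,j)$ map to the same point of the torus stratum $N_\R \subset X^{trop}$, which is shared across all charts since $\{0\} \in \Delta_{ij}$. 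Yet they are distinct in $X^{trop,\geq 0}$: an identification would force $u$ to lie in $U_\tau^{trop,\geq 0}$ for some $\tau \in \Delta_{ij}$, equivalently in the closed subspace $U_{\sigma \cap \tau}^{trop,\geq 0} \subseteq U_\sigma^{trop,\geq 0}$, which is the topological closure of $\sigma \cap \tau$. Since $u$ is in the relative interior of $\sigma$, this forces $\sigma \cap \tau = \sigma$, hence $\sigma \preceq \tau$, and combined with the subfan axiom this contradicts $\sigma \notin \Delta_{ij}$.

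For bijectivity, since proper implies separated we again reduce to $X = X(\Delta)$ and need only show that surjectivity is equivalent to completeness of $\Delta$. The image of $X^{trop,\geq 0}$ in the torus stratum $N_\R \subset X^{trop}$ is precisely $|\Delta|$ (finite monoid homomorphisms $S_\sigma \to \R_{\geq 0}$ correspond to points of $\sigma$), handling necessity. For the converse, completeness of $\Delta$ implies that for each $\tau \in \Delta$ the star of $\tau$ surjects onto $N_\R/\Span(\tau)$, yielding surjectivity stratum by stratum. The most delicate point throughout is the interplay between the open face embeddings used in $U^{trop}$ and the closed face embeddings used in $U^{trop,\geq 0}$; once one has explicitly identified the image of $U_\tau^{trop,\geq 0} \hookrightarrow U_\sigma^{trop,\geq 0}$ with the topological closure of $\tau$ inside the ambient affine tropical toric variety, the rest is routine bookkeeping with fans and their relative interiors.
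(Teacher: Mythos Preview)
Your proof is correct and follows essentially the same approach as the paper: glue the affine inclusions via the colimit descriptions, reduce injectivity and bijectivity to the separated case where $X^{trop,\geq 0}$ is the closure of $|\Delta|$ in $X^{trop}$, and for the non-separated direction exhibit a cone appearing in two charts but not in their gluing fan. Your argument for non-injectivity is in fact more carefully written than the paper's (which simply asserts that the two cones are ``glued only over a proper face'' in $X^{trop,\geq 0}$ but identified in $X^{trop}$), and your stratum-by-stratum check of surjectivity via stars is a mild elaboration of the paper's one-line claim that the image is the closure of $|\Delta|$.
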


\begin{proof}
For a face $\tau$ of a rational polyhedral cone $\sigma$, the map from \eqref{eq_nonnegtroptotropaffine} naturally makes the diagram
\begin{equation*}
    \begin{tikzcd}
        U_\tau^{trop,\geq 0}\arrow[rr]\arrow[d]&& U_\tau^{trop}\arrow[d]\\
        U_\sigma^{trop,\geq 0}\arrow[rr]&& U_\sigma^{trop}
    \end{tikzcd}
\end{equation*}
commute. Therefore the locally defined map glues to an (automatically unique) continuous map $X^{trop,\geq 0}\rightarrow X^{trop}$, as desired. If $X$ is separated, it can be described by a rational polyhedral fan $\Delta$ (and not a system of fans). In this case $X^{trop,\geq 0}\rightarrow X^{trop}$ restricts to the embedding of the fan $\Delta$ on $N_\R$ and the image of $X^{trop,\geq 0}\rightarrow X^{trop}$ is precisely the closure of $\Delta$ in $X^{trop}$. This is all of $X^{trop}$ if and only the support of $\Delta$ is all of $N_\R$, which, in turn, is equivalent to $X$ being proper. 

Vice versa, suppose that $X$ is not separated. Then there are two cones $\sigma_1$ and $\sigma_2$ in two different fans in the system of fans describing $X$ that are equal as cones in $N_\R$. In $X^{trop,\geq 0}$ these two cones are glued only over a proper face, while in $X^{trop}$ those two cones would be identified in full (while their limits at infinity in $X^{trop}$ remains distinct). Thus, in this case, the map $X^{trop,\geq 0}\rightarrow X^{trop}$ is not injective.  
\end{proof}

%%%%%%%%%%%%%%%%%%%%%%%%%%%%%%%%%%%%%%%%%%%%%%%%%%%%%%

\section{The process of tropicalization}

\subsection{Analytification and tropicalization}\label{section_tropicalization}
In this section we construct a tropicalization maps for toric prevarieties that generalizes the tropicalization for toric varieties introduced in \cite{Kajiwara_troptoric} and \cite{Payne_anallimittrop} and prove some of its properties. 

The domain of this tropicalization map is the \emph{Berkovich analytification} (in the sense of \cite{Berkovich_book}). Let $X=\Spec A$ be an affine scheme of finite type over a non-Archimedean field $K$. A point $x$ in the Berkovich analytification $X^{an}$ is a multiplicative seminorm $\vert.\vert_x\colon A\rightarrow \R$ that restricts to the given non-Archimedean norm on $K$. The space $X^{an}$ carries the weakest topology making evaluation maps
\begin{equation*}
 x\longmapsto \vert f\vert_x    
\end{equation*}
for all $f\in A$ continuous. For a morphism $\phi\colon Y\rightarrow X$ of affine schemes of finite type over $K$ there is a natural continuous map $\phi^{an}\colon Y^{an}\rightarrow X^{an}$ that is given by 
\begin{equation*}
    \vert.\vert_x\longmapsto \vert.\vert_x\circ \phi^\#
\end{equation*}
where $\phi^\#\colon B\rightarrow A$ denotes the induced map on global section. In particular, when $U\subseteq X$ is an open affine subset, the induced map $U^{an}\rightarrow X^{an}$ is an open immersion. 

When $X$ is a not necessarily affine scheme that is locally of finite type over $K$, the Berkovich analytification $X^{an}$ is locally on affine open subsets given by the above space of seminorms and globally by gluing. This way every point in $X^{an}$ may be represented by an $L$-valued point for a suitable non-Archimedean extension $L$ of $K$. Moreover, for a morphism $f\colon X\rightarrow Y$ of schemes locally of finite type over $K$, there is an induced continuous map $\phi^{an}\colon X^{an}\rightarrow Y^{an}$ that restricts to the above pullbacks of seminorms on open affine subsets; the association $f\mapsto f^{an}$ is functorial in $f$. 

We refer the reader to \cite{Berkovich_book} and also to \cite{Payne_AMSnotices} for further details and, in particular, to \cite{Berkovich_book} for the definition of a structure sheaf on $X^{an}$. Since here we only care about the underlying topological space of $X^{an}$, we refrain from going further into this direction.

\begin{proposition}\label{prop_troptoricpre} Let $X$ be a toric prevariety over a non-Archimedean field $K$ given by a system of fans $S=(\Delta_{ij})_{i,j\in I}$. 
\begin{enumerate}[(i)]
\item There is a natural continuous and proper tropicalization map $\trop_X\colon X^{an}\rightarrow X^{trop}$ whose restriction to a torus-invariant open affine subset $U_{[\sigma,i]}$ for a cone $[\sigma,i]\Omega(S)$ is given by 
\begin{equation*}\begin{split}
\trop_\sigma\colon U_{[\sigma,i]}^{an}&\longrightarrow U_{[\sigma,i]}^{trop}\deq\Hom(S_\sigma,\Rbar)\subseteq X^{trop}\\
x&\longmapsto \big(s\mapsto -\log\vert \chi^s\vert_x\big) \ .
\end{split}\end{equation*}
\item For a toric morphism $f\colon X\rightarrow X'$ induced by a morphism $(F,\frakf)\colon S\rightarrow S'$ of systems of fans, the natural diagram 
\begin{equation*}\begin{tikzcd}
X^{an}\arrow[rr,"\trop_X"]\arrow[d,"f^{an}"]&& X^{trop}\arrow[d,"f^{trop}"]\\
(X')^{an}\arrow[rr,"\trop_{X'}"] &&(X')^{trop}
\end{tikzcd}\end{equation*}
commutes. 
\item There is a natural section $J_X\colon X^{trop}\rightarrow X^{an}$ of $\trop_X$ such that the composition $r_X=J_X\circ \trop_X\colon X^{an}\rightarrow X^{an}$ is a strong deformation retraction onto a closed subset $\Sigma(X^{an})$ of $X^{an}$ (that is automatically homeomorphic to $X^{trop})$.
\item Let $T^\circ=\big\{t\in T^{an}\big\vert \vert\chi^m\vert=1 \textrm{ for all } m\in M\big\}$ be the affinoid torus in $T^{an}$. There is a natural homeomorphism 
\begin{equation*}
\mu_X\colon \big\vert [X^{an}/T^\circ]\big\vert\xlongrightarrow{\sim} X^{trop,\geq 0}\end{equation*} between the underlying topological space of the non-Archimedean analytic stack quotient $[X^{an}/T^\circ]$ and $X^{trop}$ that makes the diagram 
\begin{equation*}\begin{tikzcd}
& X^{an} \arrow[rd,"\trop_X"]\arrow[ld]& \\
\big\vert {[X^{an}/T^\circ]}\big\vert\arrow[rr,"\mu_X"]&& X^{trop}
\end{tikzcd}\end{equation*}
commute. 
\end{enumerate}
\end{proposition}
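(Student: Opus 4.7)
The plan is to reduce each of the four statements to its known counterpart for individual affine toric varieties (as developed in \cite{Kajiwara_troptoric, Payne_anallimittrop, Rabinoff}) and then to transport the results across the gluing encoded by the system of fans $S$. For (i), on each chart $U_{[\sigma,i]}$ the formula $x\mapsto\bigl(s\mapsto-\log|\chi^s|_x\bigr)$ is classically a continuous proper surjection $\trop_\sigma\colon U_{[\sigma,i]}^{an}\to U_{[\sigma,i]}^{trop}$. Whenever $[\tau,j]\preceq[\sigma,i]$ the face inclusion $K[S_\sigma]\hookrightarrow K[S_\tau]$ immediately intertwines $\trop_\sigma$ with $\trop_\tau$ through the open immersion \eqref{eq_faceopenimmersion}. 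Since $X^{an}$ and $X^{trop}$ are, respectively, the colimits of the $U_{[\sigma,i]}^{an}$ and $U_{[\sigma,i]}^{trop}$ along those very face maps, the $\trop_\sigma$ assemble into the desired continuous map $\trop_X$; properness is local on the target, so it descends from the affine case. Part (ii) is handled identically: Definition \ref{def_morsysfan} is precisely the condition that a morphism $(F,\frakf)\colon S\to S'$ restricts on each chart to a toric morphism of affine toric varieties, for which the tropicalization square commutes by the classical functoriality, and the local squares glue along face maps.

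For (iii) I use the explicit section of \cite{Rabinoff}: a point $u\in U_{[\sigma,i]}^{trop}=\Hom(S_\sigma,\Rbar)$ is sent to the monomial (``Gauss'') seminorm $J_\sigma(u)\colon\sum_s a_s\chi^s\mapsto\max_s|a_s|\,e^{-u(s)}$, and these sections are manifestly compatible with face inclusions, so they glue to a global continuous section $J_X$ of $\trop_X$. The classical strong deformation retraction of $U_{[\sigma,i]}^{an}$ onto $J_\sigma\bigl(U_{[\sigma,i]}^{trop}\bigr)$ is constructed by continuously interpolating a seminorm with its associated monomial seminorm; this is visibly natural in the chart and hence glues to a strong deformation retraction $r_X$ of $X^{an}$ onto $\Sigma(X^{an})\deq J_X(X^{trop})$. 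Part (iv) follows because the action of the compact torus $T^\circ\subseteq T^{an}$ preserves $|\chi^s|$ for every character, so $\trop_\sigma$ is constant on $T^\circ$-orbits, and the induced map from the orbit space is a homeomorphism in the affine case by Rabinoff's analysis; the colimit description then transports this to $\mu_X$ and yields the commutative triangle.

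The main obstacle will be verifying that the section and the deformation retraction of (iii) glue without ambiguity over the non-separated scheme $X$. Concretely, when the same cone $\sigma$ appears in two different fans of $S$, the Gauss points it produces in the two corresponding affine charts should yield either distinct points of $X^{an}$ or the same point, depending on the status of $[\sigma,\cdot]$ in $\Omega(S)$; this is built into the gluing at the level of $X^{an}$, but the compatibility with $J_X$ has to be checked to ensure that the section is well defined and continuous. The same care is needed for the interpolating homotopies, where one must confirm that at every time $t\in[0,1]$ the seminorm stays within the correct chart, so that the global homotopy defining $r_X$ makes sense on $X^{an}$.
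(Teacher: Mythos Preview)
Your overall strategy—verify everything on affine torus-invariant charts and glue along the poset $\Omega(S)$—is exactly the paper's approach, and for parts (ii), (iii), and (iv) your sketch matches the paper's argument (the paper cites \cite{Thuillier_toroidal} for the explicit homotopy in (iii) and \cite{Ulirsch_tropisquot} rather than Rabinoff for (iv), but the structure is the same). The obstacle you flag about gluing $J_X$ and the homotopy over non-separated charts is legitimate; the paper addresses it only by remarking that the local constructions are compatible with face restrictions, so your level of detail there is already comparable.

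There is, however, a genuine gap in your argument for properness in (i). The claim that ``properness is local on the target, so it descends from the affine case'' does not go through as stated: if $A\subseteq X^{\trop}$ is compact and you cover it by the open charts $U_{[\sigma,i]}^{\trop}$, the pieces $A\cap U_{[\sigma,i]}^{\trop}$ are open in $A$, not closed, and hence need not be compact—so you cannot invoke properness of the individual $\trop_\sigma$ directly. (Properness in the sense ``preimages of compacts are compact'' is not local on an open cover of the target once the target fails to be Hausdorff.) The paper repairs this with a two-step argument. First, it uses that any compact—meaning quasi-compact \emph{and Hausdorff}—subset $A$ of the non-Hausdorff space $X^{\trop}$ must already lie inside $X(\Delta_i)^{\trop}$ for one of the maximal fans $\Delta_i$ of $S$; this is where the Hausdorff hypothesis on $A$ is spent, since the non-separable pairs of points in $X^{\trop}$ are precisely those duplicated across different $\Delta_i$. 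This reduces the question to the separated toric variety $X(\Delta_i)$. Second, for a separated toric variety the paper chooses a toric completion $\widetilde{X}$ and observes that $\trop_X^{-1}(A)=\trop_{\widetilde{X}}^{-1}(A)$ is closed in the compact space $\widetilde{X}^{an}$, hence compact. Inserting this (or an equivalent) argument in place of the locality claim would complete your proof.
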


\begin{proof}
Let $f\colon U_{\sigma}\to U_{\sigma^{\prime}}$ be a toric morphism of affine toric varieties and write $f^{\#}$ for the induced $K$-algebra homomorphism  $K[S_{\sigma^{\prime}}]\to K[S_{\sigma}]$ and the monoid homomorphism $S_{\sigma^{\prime}}\rightarrow S_\sigma$. Let $x\in X^{an}$ and $s^{\prime}\in S_{\sigma^{\prime}}$. Then we have
\begin{equation*}\begin{split}
(\trop_{U_{\sigma^{\prime}}}\circ\:f^{an})(x)(s^{\prime})&=-\log\vert \chi^{s^{\prime}}\vert_{f(x)}\\&=-\log\vert f^{\#}(\chi^{s^{\prime}})\vert_{x}\\
&=-\log\vert\chi^{f^\#(s)}\vert_{x}\\&=\trop_{U_\sigma}(x)(f^\#(s^{\prime}))
=(f^{trop}\circ \trop_{U_\sigma})(x)(s^{\prime})
\end{split}\end{equation*}
and this shows that the diagram
\begin{equation}\label{eq_commutavityaffinetrop}
    \begin{tikzcd}
        U_\sigma^{an}\arrow[d,"f^{an}"]\arrow[rr,"\trop_{U_\sigma}"]&& U_\sigma^{trop}\arrow[d,"f^{trop}"]\\
        U_{\sigma^{\prime}}^{an}\arrow[rr,"\trop_{U_{\sigma^{\prime}}}"]  && U_{\sigma^{\prime}}^{trop}
    \end{tikzcd}
\end{equation}
commutes. 

For Part (i) let $\sigma$ be a rational polyhedral cone. The commutativity of \eqref{eq_commutavityaffinetrop} applied to the open embedding of a torus-invariant open subset $U_\tau\subseteq U_\sigma$ associated to a face $\tau$ of $\sigma$ shows that $\trop_{U_\sigma}$ naturally restricts to $\trop_{U_\tau}$ on $U_\tau^{an}\subseteq U_\sigma^{an}$. The existence of $\trop_X$ now follows, since we can write $X^{an}$ as the colimit of the $U_{[\sigma,i]}^{an}$ for all torus-invariant open affine subsets $U_{[\sigma,i]}$ of $X$. The continuity of $\trop_X$ follows from the continuity of the $\trop_{U_{[\sigma,i]}}$ and there it is obvious. 

Let $A$ be a compact subset of $X^{trop}$, i.e. $A$ is a quasi-compact Hausdorff space. Then $A$ has to lie in a Hausdorff open subset in $X^{trop}$ of the form $X(\Delta_{i})$ for a fan $\Delta_i$ in $\calS$ (here we choose the fans in $\calS$ to be maximal). Then $\trop_X^{-1}(A)\subseteq X(\Delta_i)^{an}$ and so it is enough to show the properness of $\trop_X$ for a (separated) toric variety. Here this is a fact that seems to be well-known in the community. We only provide a proof, since we could not find it written explicitly in the literature. 

Let $X=X(\Delta)$ be a toric variety associated to a fan $\Delta$ in the vector space $N_\R$ generated by the cocharacter lattice $N$ of its big torus $T$. Let $A\subseteq X^{trop}$ a compact subset. Choose a toric completion $\widetilde{X}$ of $X$. Then $A$ is a closed subset of $\widetilde{X}^{trop}$. Its preimage $\trop_{\widetilde{X}}^{-1}(A)$ is closed, since $\trop_{\widetilde{X}}$ is continuous; so, using that $\widetilde{X}^{trop}$ is compact, we find that 
\begin{equation*}
    \trop_X^{-1}(A)=\trop_{\widetilde{X}}^{-1}(A)
\end{equation*}
is compact as well.

For Part (ii) it suffices to check this statement for an torus-invariant open affine subset $U_{\sigma}\subseteq X$ and then this is the commutativity of \eqref{eq_commutavityaffinetrop}.

In Part (iii) we again first consider a torus-invariant open affine subset $U_\sigma$ of $X$. Here the section $J_{U_\sigma}\colon U_\sigma^{trop}\rightarrow U_\sigma^{an}$ is given by associating to $u\in U_\sigma=\Hom(S_\sigma, \Rbar)$ the multiplicative seminorm defined by
\begin{equation*}
    \Big\vert \sum_{s\in S_\sigma}a_s\chi^s\Big\vert_{J(u)}\deq\sup_{s\in S_\sigma}\big\{\vert a_s \vert e^{-u(s)}\big\}
\end{equation*}
for an element $\sum_{s\in S_\sigma}a_s\chi^s\in K[S_\sigma]$. It is clear that this map is continuous and that the equality $\trop_{U_\sigma}\circ J_{U_\sigma}=\id_{U_\sigma}$ holds. Set $\rho_{U_\sigma}:=J_{U_\sigma}\circ \trop_{U_\sigma}$ and observe that 
\begin{equation*}
    \rho_{U_\sigma}\circ\rho_{U_\sigma}=J_{U_\sigma}\circ \trop_{U_\sigma}\circ J_{U_\sigma}\circ \trop_{U_\sigma}=J_{U_\sigma}\circ \trop_{U_\sigma}=\rho_{U_\sigma} \ . 
\end{equation*}
Therefore $\rho_{U_\sigma}\colon X^{an}\rightarrow X^{an}$ is a retraction onto a closed subset of $X^{an}$, the \emph{non-Archimedean skeleton} of $U_\sigma^{an}$ (which is automatically homeomorphic to $U_\sigma^{trop}$). We refer the reader to \cite[Section 2]{Thuillier_toroidal} for an explicit construction of a strong homotopy between the identity map on $U_\sigma^{an}$ and $\rho_{U_\sigma}$. 

Since all of these construction are naturally compatible with restrictions to a torus-invariant open affine subset $U_\tau$ of $U_\sigma$ for a face $\tau$ of $\sigma$, there is also a continuous section $J_X\colon X^{trop}\rightarrow X^{an}$ of $\trop_X$, a retraction $\rho_X\colon X^{an}\rightarrow X^{an}$ given by $\rho_X:=J_X\circ \trop_X$, and a strong homotopy between the identity on $X^{an}$ and $\rho_X$.

In Part (iv) we may again reduce to a torus-invariant open affine subset $U_\sigma$ and in this case the proof is identical to the one presented in \cite{Ulirsch_tropisquot}. 
\end{proof}

%%%%%%%%%%%%%%%%%%%%%%%%%%%%%%%%%%%%%%%%%%%%%%%%%%%%%%

\subsection{Raynaud generic fiber and non-negative tropicalization}\label{section_positivetropicalization}
In this section we construct a natural \emph{non-negative tropicalization map}, which seems to have appeared first in \cite{PopescuPampuStepanov} for affine toric varieties.

The domain of the non-negative tropicalization map is not the Berkovich analytification, but rather a certain variant called the \emph{Raynaud generic fiber} that, in its simplest form, associates to a scheme locally of finite type over the valuation ring $R$ of non-Archimedean field $K$ a Berkovich analytic space $X^\circ$ (see \cite[Sections 5.2 and 5.3]{Temkin_introBerkovich} for details). 

On an affine $R$-scheme $X=\Spec A$ of finite type the underlying topological space of the Raynaud generic fiber $X^\circ$ is the closed subset of $X_K^{an}$ that parametrizes multiplicative seminorms $\vert.\vert_x\colon A\rightarrow R_{\geq 0}$ that are \emph{bounded}, i.e. that fulfill $\vert f\vert_x\leq 1$ for all $f\in A$. Given a morphism $\phi\colon Y\rightarrow X$ of affine schemes $X=\Spec A$ and $Y=\Spec B$ of finite type over $R$ the induced map $\phi^{an}\colon Y_K^{an}\rightarrow X_K^{an}$ restricts to a natural continuous map $\phi^\circ\colon Y^\circ\rightarrow X^\circ$. Notice, however, that for an open affine subset $U=\Spec B$ of $X=\Spec A$, the subset $U^\circ\subseteq X^\circ$ is closed (and, in fact, a so-called \emph{affinoid domain}).

For a not necessarily affine scheme $X$ that is locally of finite type over $R$, the Raynaud generic fiber is the colimit of all $U^\circ$ for open affine subsets $U=\Spec A$ of $X$. Note that this way every point of $X^\circ$ may be represented by an $R'$-valued point for the valuation ring $R
$ of a non-Archimedean extension of $K$. Also every morphism $f\colon X\rightarrow Y$ of schemes locally of finite type over $R$ induces a natural continuous map $f^\circ \colon X^\circ\rightarrow Y^\circ$ that restricts to the pullback of seminorms on open affine subsets; the association $f\mapsto f^\circ$ is functorial.

There is a natural continuous map $X^\circ\rightarrow X^{an}$ that is locally induced by the inclusions $U^\circ\hookrightarrow U^{an}$ on open affine subsets $U=\Spec A$ of $X$. By the valuative criteria, this map is injective if and only $X$ is separated and bijective if and only if $X$ is proper. 

\begin{proposition}\label{prop_nonnegtroptoricpre} Let $X$ be a toric prevariety over a complete valuation ring $R$ given by a system of fans $S=(\Delta_{ij})_{i,j\in I}$. 
\begin{enumerate}[(i)]
\item There is a natural continuous and proper tropicalization map $\trop_X\colon X_\eta\rightarrow X^{trop,\geq 0}$ whose restriction to a torus-invariant open affine subset $U_{[\sigma,i]}$ for a cone $[\sigma,i]\in\Omega(S)$ is given by 
\begin{equation*}\begin{split}
\trop_\sigma\colon U_{[\sigma,i]}^\circ&\longrightarrow U_{[\sigma,i]}^{trop,\geq 0}\deq\Hom(S_\sigma,\Rbar_{\geq 0})\\
x&\longmapsto \big(s\mapsto -\log\vert \chi^s\vert_x\big) \ .
\end{split}\end{equation*}
\item For a toric morphism $f\colon X\rightarrow X'$ induced by a morphism $(F,\frakf)\colon S\rightarrow S'$ of systems of fans, the natural diagram 
\begin{equation*}\begin{tikzcd}
X^\circ\arrow[rr,"\trop_X"]\arrow[d,"f^\circ"]&& X^{trop,\geq 0}\arrow[d,"f^{trop}"]\\
(X')^\circ\arrow[rr,"\trop_{X'}"] && (X')^{trop,\geq 0}
\end{tikzcd}\end{equation*}
commutes. 
\item There is a natural section $J_X\colon X^{trop,\geq 0}\rightarrow X^\circ$ of $\trop_X$ such that the composition $r_X=J_X\circ \trop_X\colon X^\circ\rightarrow X^\circ$ is a strong deformation retraction onto a closed subset $\Sigma(X^\circ)$ of $X^\circ$ (which is automatically homeomorphic to $X^{trop,\geq 0}$).
\item Let $T^\circ=\big\{t\in T^{an}\big\vert \vert\chi^m\vert=1 \textrm{ for all } m\in M\big\}$ be the affinoid torus in $T^{an}$. There is a natural homeomorphism 
\begin{equation*}
\mu_X\colon \big\vert [X^{\circ}/T^\circ]\big\vert\xlongrightarrow{\sim} X^{trop,\geq 0}
\end{equation*} between the underlying topological space of the non-Archimedean analytic stack quotient $[X^{\circ}/T^\circ]$ and $X^{trop,\geq 0}$ that makes the diagram 
\begin{equation*}\begin{tikzcd}
& X^{\circ} \arrow[rd,"\trop_X"]\arrow[ld]& \\
\big\vert {[X^{\circ}/T^\circ]}\big\vert\arrow[rr,"\mu_X"]&& X^{trop,\geq 0}
\end{tikzcd}\end{equation*}
commute. 
\end{enumerate}
\end{proposition}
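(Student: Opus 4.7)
My plan is to mirror the proof of Proposition \ref{prop_troptoricpre} almost verbatim, with $X^{an}$ replaced by the Raynaud generic fiber $X^\circ$ and $X^{trop}$ replaced by $X^{trop,\geq 0}$. The single new conceptual input is that the defining boundedness condition of $X^\circ$ — namely $|f|_x \leq 1$ for $f \in A$ — precisely accounts for the passage to the non-negative part of the target: if $U_\sigma = \Spec R[S_\sigma]$, then $\chi^s \in R[S_\sigma]$ for every $s \in S_\sigma$, hence $|\chi^s|_x \leq 1$, so $-\log|\chi^s|_x \geq 0$, forcing $\trop_{U_\sigma}$ to factor through $\Hom(S_\sigma,\Rbar_{\geq 0}) = U_\sigma^{trop,\geq 0}$.

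For Part (i), I would first establish the commutative diagram for a toric morphism of affine toric prevarieties exactly as in \eqref{eq_commutavityaffinetrop}, with ${}^\circ$ in place of ${}^{an}$. Applying this to the closed embedding $U_\tau \hookrightarrow U_\sigma$ for a face $\tau \preceq \sigma$ shows that the maps $\trop_{U_{[\sigma,i]}}$ are compatible with the gluing data defining $X^\circ$ and $X^{trop,\geq 0}$, producing a continuous global map. For properness I would argue, as in the analytification case, that any compact subset of $X^{trop,\geq 0}$ is Hausdorff and thus lies in a Hausdorff open subset of the form $X(\Delta_i)^{trop,\geq 0}$ for some fan $\Delta_i$ in the defining system, reducing to the separated case. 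In the separated case, Proposition \ref{prop_nonnegtroptotrop} identifies $X^{trop,\geq 0}$ with a closed subset of $X^{trop}$, while $X^\circ$ embeds as a closed subset of $X_K^{an}$, so properness of $\trop_X$ is inherited from Proposition \ref{prop_troptoricpre}(i). Part (ii) is then immediate from the same commutative diagram as in the analytification case.

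For Part (iii), the local section $J_{U_\sigma}\colon U_\sigma^{trop,\geq 0} \to U_\sigma^\circ$ is defined by the same formula $\bigl|\sum a_s \chi^s\bigr|_{J(u)} = \sup_s |a_s|\, e^{-u(s)}$, and the essential new check is the boundedness of $J(u)$ on $R[S_\sigma]$: for $a_s \in R$ we have $|a_s| \leq 1$, and $e^{-u(s)} \leq 1$ because $u(s) \geq 0$, so $|f|_{J(u)} \leq 1$ for every $f \in R[S_\sigma]$. The retraction identity $\trop \circ J = \id$, the identification of $\Sigma(X^\circ)$ as a closed skeleton homeomorphic to $X^{trop,\geq 0}$, and the strong deformation retraction all transfer directly from the analytification argument, using Thuillier's explicit homotopy, which visibly preserves the boundedness condition. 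Part (iv) reduces to a single chart $U_\sigma$ and then follows from the argument in \cite{Ulirsch_tropisquot}, once one notes that the affinoid torus $T^\circ$ coincides with the Raynaud generic fiber of the integral torus $T$ over $R$ and thus acts on $X^\circ$ compatibly with its action on $X_K^{an}$. The step I expect to require genuine care rather than direct transcription is the global properness in Part (i): $X^\circ$ itself need not be compact in the non-separated setting, and it is the reduction to Hausdorff pieces of $X^{trop,\geq 0}$ indicated above, together with Proposition \ref{prop_nonnegtroptotrop}, that makes the argument go through.
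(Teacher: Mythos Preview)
Your overall plan mirrors the paper's, and the extra checks you supply—that boundedness on $R[S_\sigma]$ forces the target into $\Hom(S_\sigma,\Rbar_{\geq 0})$, and that $J(u)$ is itself a bounded seminorm—are exactly the points the paper leaves implicit.

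The one place your transcription does not go through is the properness reduction in Part (i). You assert that a compact subset of $X^{trop,\geq 0}$ lies in a Hausdorff \emph{open} piece $X(\Delta_i)^{trop,\geq 0}$, copying the argument of Proposition~\ref{prop_troptoricpre}. But the face inclusions \eqref{eq_faceclosedimmersion} are \emph{closed} embeddings, so each $X(\Delta_i)^{trop,\geq 0}$ is a closed (not open) subset of $X^{trop,\geq 0}$, and a compact set need not be contained in any single one—already the entire V-shape of Example~\ref{example_linewithtwooriginstropnonneg} is compact. The paper sidesteps this by observing that in the separated case both $X^\circ$ and $X^{trop,\geq 0}$ are themselves compact, so properness is immediate without any toric completion and without your detour through Proposition~\ref{prop_nonnegtroptotrop}. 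In fact both spaces are compact even in the non-separated case, being finite colimits of compact Hausdorff affinoid (resp.\ extended-cone) pieces along closed embeddings; so your worry that ``$X^\circ$ itself need not be compact in the non-separated setting'' is misplaced—compactness is precisely what makes this proposition \emph{easier} than Proposition~\ref{prop_troptoricpre}, not harder.

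A small terminological slip: $U_\tau \hookrightarrow U_\sigma$ is an open immersion of schemes; it is the induced maps on Raynaud generic fibers and on non-negative tropicalizations that become closed.
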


\begin{proof}
Let $f\colon U_\sigma\rightarrow U_\sigma$ be a toric morphism between two affine toric varieties associated to the rational polyhedral cones $\sigma$ and $\sigma'$. Then the commutativity of \eqref{eq_commutavityaffinetrop} implies the commutativity of \begin{equation}\label{eq_commutavityaffinenonnegtrop}
    \begin{tikzcd}
        U_\sigma^{\circ}\arrow[d,"f^{\circ}"']\arrow[rr,"\trop_{U_\sigma}"]&& U_\sigma^{trop, \geq 0}\arrow[d,"f^{trop}"]\\
        U_{\sigma^{\prime}}^{\circ} \arrow[rr,"\trop_{U_{\sigma^{\prime}}}"]  && U_{\sigma^{\prime}}^{trop,\geq 0}
    \end{tikzcd}
\end{equation}
One now argues as in the proof of Proposition \ref{prop_troptoricpre} in order to show that Parts (i)-(iii). We point out the proof of the properness of $\trop_X$ for a (separated) toric variety $X$ is easier here, since both $X^\circ$ and $X^{trop}$ are already compact. 

For Part (iv) we again reduce to the case of a torus-invariant open affine subset $U_\sigma$ of $X$. We recall from \cite{Ulirsch_tropisquot} that the central part of the argument was that $U_\sigma^{trop}$ is the topological $1$-colimit of the groupoid presentation $T_K^\circ\times U_{\sigma, K}^{an}\rightrightarrows U_{\sigma,K}^{an}$ of $\big[U_{\sigma,K}^{an}/T^\circ\big]$. But this implies that $U_\sigma^{trop,\geq 0}$ is the topological $1$-colimit of the groupoid presentation $T^\circ\times X^\circ\rightrightarrows X^\circ$ and so we find that $X^{trop,\geq 0}$ is naturally homeomorphic to the underlying topological space $\big\vert\big[ X^\circ/T^\circ\big]\big\vert$ of $\big[ X^\circ/T^\circ\big]$ (in the sense of \cite[Section 3]{Ulirsch_tropisquot}).
\end{proof}

The non-negative tropicalization map is compatible with the usual tropicalization map, as indicated in the following.

\begin{proposition}\label{prop_compatibility}
Let $X=X(S)$ be a toric prevariety over the valuation ring $R$ of a non-Archimedean field $K$ and write $X_K\deq X\times_R K$ for its scheme-theoretic generic fiber. Then the natural diagram
\begin{equation*}
\begin{tikzcd}
X^{\circ}\arrow[rr,"\trop_X^{\geq 0}"]\arrow[d]&&X^{trop,\geq 0}\arrow[d]\\
X_K^{an}\arrow[rr,"\trop_{X_K}"]&& X^{trop}
\end{tikzcd}
\end{equation*}
commutes.
\end{proposition}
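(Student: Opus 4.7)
The plan is to reduce the statement to the affine case and then verify commutativity by a direct computation using the explicit formulas defining the two tropicalization maps, exploiting the fact that both vertical maps in the diagram are locally just inclusions.

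More precisely, I would first observe that all four objects in the diagram admit compatible open/colimit descriptions indexed by $[\sigma,i]\in\Omega(S)$: the Raynaud generic fiber $X^\circ$ and the Berkovich analytification $X_K^{an}$ are glued from the $U_{[\sigma,i]}^\circ$ and $U_{[\sigma,i]}^{an}$ respectively; $X^{trop,\geq 0}$ and $X^{trop}$ are glued from the $U_{[\sigma,i]}^{trop,\geq 0}$ and $U_{[\sigma,i]}^{trop}$ as in Sections \ref{section_troptoricpre} and earlier. The horizontal maps $\trop_X^{\geq 0}$ and $\trop_{X_K}$ are defined locally via Propositions \ref{prop_nonnegtroptoricpre}(i) and \ref{prop_troptoricpre}(i) respectively, and the two vertical maps are defined locally as well — the left vertical is the natural inclusion $U_{[\sigma,i]}^\circ\hookrightarrow U_{[\sigma,i], K}^{an}$ sending a bounded multiplicative seminorm on $R[S_\sigma]$ to its unique extension to $K[S_\sigma]$, while the right vertical is the map from Proposition \ref{prop_nonnegtroptotrop}, which locally is simply the inclusion $\Hom(S_\sigma,\Rbar_{\geq 0})\hookrightarrow\Hom(S_\sigma,\Rbar)$. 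Hence, by the universal property of colimits, it suffices to check commutativity on each torus-invariant open affine $U_{[\sigma,i]}$.

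In the affine case, the diagram becomes
\begin{equation*}
\begin{tikzcd}
U_{[\sigma,i]}^{\circ}\arrow[rr,"\trop_\sigma^{\geq 0}"]\arrow[d]&&U_{[\sigma,i]}^{trop,\geq 0}\arrow[d,hook]\\
U_{[\sigma,i],K}^{an}\arrow[rr,"\trop_\sigma"]&& U_{[\sigma,i]}^{trop}
\end{tikzcd}
\end{equation*}
and for $x\in U_{[\sigma,i]}^\circ$, corresponding to a bounded seminorm $|\cdot|_x$ on $R[S_\sigma]$, both compositions send $x$ to the monoid homomorphism $s\mapsto -\log|\chi^s|_x$ evaluated via the extended seminorm on $K[S_\sigma]$. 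The only thing to check is that these really agree as elements of $\Hom(S_\sigma,\Rbar)$, which is immediate since extending a bounded seminorm from $R[S_\sigma]$ to $K[S_\sigma]$ does not change its values on $\chi^s$ for $s\in S_\sigma$; and that the resulting element indeed lies in $\Hom(S_\sigma,\Rbar_{\geq 0})$, which holds because boundedness of $x$ gives $|\chi^s|_x\leq 1$ for all $s\in S_\sigma$ and therefore $-\log|\chi^s|_x\geq 0$.

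There is no substantial obstacle: the entire argument is a matter of unwinding the constructions in Sections \ref{section_tropicalization} and \ref{section_positivetropicalization} together with the description of the comparison map in Proposition \ref{prop_nonnegtroptotrop}. The one point deserving mild care is checking that the open-affine cover used on the analytic side matches the one used on the tropical side so that the affine-local commutativity really glues; this is guaranteed because the system of fans $S$ underlying $X$ provides simultaneously the charts $U_{[\sigma,i]}$ over $R$ (hence over $K$) and the affine pieces $U_{[\sigma,i]}^{trop,\geq 0}$ and $U_{[\sigma,i]}^{trop}$ used in the colimit presentations of the tropicalizations.
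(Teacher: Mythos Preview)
Your proposal is correct and follows essentially the same approach as the paper: reduce to the torus-invariant affine open subsets $U_{[\sigma,i]}$ and observe that there the two tropicalization maps are given by the same formula $x\mapsto(s\mapsto -\log\vert\chi^s\vert_x)$. Your write-up is in fact more detailed than the paper's own proof, which simply states that the formulas in Propositions \ref{prop_troptoricpre}(i) and \ref{prop_nonnegtroptoricpre}(i) agree on $U_{[\sigma,i]}^\circ\subseteq U_{[\sigma,i],K}^{an}$.
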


So, when $X$ is separated, the non-negative tropicalization map is precisely the restriction of the usual tropicalization map to $X^\circ\subseteq X^{an}$.

\begin{proof}[Proof of Proposition \ref{prop_compatibility}]
We may check the commutativity of the diagram on open affine torus-invariant subsets $U_{[\sigma,]}\subseteq X$ for a rational polyhedral cone $[\sigma,i]\in\Omega(S)$. But in this case we have a commutative diagram
\begin{equation*}
    \begin{tikzcd}
        U_{[\sigma,i]}^\circ \arrow[rr,"\trop_{U_{[\sigma,i]}}^{\geq 0}"]\arrow[d]&& U_{[\sigma,i]}^{trop,\geq 0}\arrow[d]\\
        U_{[\sigma,i],K}^{an}\arrow[rr,"\trop_{U_{[\sigma,i],K}}"] && U_{[\sigma,i]}^{trop}\ ,
    \end{tikzcd}
\end{equation*}
since the formulas in Proposition \ref{prop_troptoricpre} (i) and Proposition \ref{prop_nonnegtroptoricpre} (i) agree for $x\in U_{[\sigma,i]}^{\circ}\subseteq U_{[\sigma,i]}^{an}$.
\end{proof}

\begin{remark}
When $K$ is carrying the trivial absolute value, its valuation ring is simply $K$ itself and the Raynaud generic fiber of a scheme $X$ that is locally of finite type over $K$ is nothing but the $\beth$-space constructed in \cite{Thuillier_toroidal}. The non-negative tropicalization map then turns out to be a special case of the tropicalization map constructed in \cite{Ulirsch_functroplogsch} for a fine and saturated logarithmic scheme $X$. This tropicalization map, in turn, can be identified with a retraction to a non-Archimedean skeleton of $X^\beth$, when $X$ is logarithmically smooth over $K$ by \cite{Thuillier_toroidal}. 

\end{remark}

%%%%%%%%%%%%%%%%%%%%%%%%%%%%%%%%%%%%%%%%%%%%%%%%%%%%%%

\section{Homogeneous spectra of multigraded rings}

In this section we show how to associate a reasonably well-behaved scheme to a ring $S$ graded by a finitely generated abelian group $D$. The construction was invented by Brenner and Schr\"oer \cite{BrennerSchroer}, and it generalizes the usual projective spectrum of an $\N$-graded ring. A more detailed discussion of this material (in particular an analysis of the connection with convex geometry) can be found in \cite{KU}. Here we will only focus on the construction itself, and the path leading to embeddings of divisorial schemes into simplicial toric prevarietiees. This section is expository. 

For the duration of this section let $D$ be a finitely generated abelian group, and let 
\[
S \equ \bigoplus_{d\in D} S_d
\]
be a $D$-graded ring. We will write $S_d$ for the degree $d$ homogeneous piece of $S$, and $\deg_D(f)$ for the degree of a homogeneous element $f\in S$. 

\begin{definition}[Relevant element]\label{defn:relevant}
We call a $D$-graded ring $S$ \emph{periodic} if the subgroup
\[
\big\{ \deg_D(f) \mid f\in S^\times \text{ homogeneous }\big\} \leq D
\]
is of finite index (here $S^\times$ denotes the set of homogeneous units in $S$). An element $f$ of a $D$-graded ring $S$ is said to be \emph{relevant} if it is homogeneous, and the localization $S_f$ is periodic. We denote the set of relevant elements of $S$ by $\Rel_D(S)$. 	
\end{definition}

\begin{remark}
If $D\simeq \Z$, then every homogeneous element of non-zero degree is relevant. 
\end{remark}

\begin{lemma}[\cite{BrennerSchroer} Lemma 2.1]\label{lem:periodic git}
	Let $S$ be a periodic $D$-graded ring. Then the projection morphism $\Spec (S)\to \Spec (S_0)$ induced by the inclusion $S_0\hookrightarrow S$
	is a geometric quotient in the sense of geometric invariant theory.  In particular, for $f\in\Rel^D(S)$, the morphism $S_f\to S_{(f)} \deq (S_f)_0$ will be a geometric quotient.  
\end{lemma}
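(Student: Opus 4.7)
The plan is to interpret the $D$-grading on $S$ as an action of the diagonalizable group scheme $G:=\Spec\mathbb{Z}[D]$ on $X:=\Spec(S)$, for which the ring of invariants $S^G$ is exactly $S_0$. The structure map $\pi\colon\Spec(S)\to\Spec(S_0)$ is then the affine GIT quotient; it is automatically affine, surjective, submersive, and satisfies $\pi_\ast\mathcal{O}_X^G=\mathcal{O}_{\Spec(S_0)}$, so it is at least a good categorical quotient. The remaining task is to show that its set-theoretic fibers are precisely the $G$-orbits, which is the one property separating a geometric quotient from a merely good one.

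The key input is periodicity. I would choose finitely many homogeneous units $f_1,\dots,f_n\in S^\times$ whose degrees generate a finite-index subgroup $D_0\leq D$; taking products and inverses then yields a homogeneous unit of every single degree in $D_0$. Applying $\Spec\mathbb{Z}[-]$ to the short exact sequence $0\to D_0\to D\to D/D_0\to 0$ of abelian groups produces an exact sequence $1\to H\to G\to G_0\to 1$ of diagonalizable group schemes, with $G_0=\Spec\mathbb{Z}[D_0]$ and with $H=\Spec\mathbb{Z}[D/D_0]$ finite. The intermediate graded subring $S':=\bigoplus_{d\in D_0}S_d$ satisfies $S^H=S'$ and $(S')^{G_0}=S_0$, giving a factorization
\[
\Spec(S)\longrightarrow\Spec(S')\longrightarrow\Spec(S_0)
\]
of $\pi$ into two quotient-like steps.

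Next I would verify each factor separately. For $\Spec(S')\to\Spec(S_0)$, fix a homogeneous unit $h_d\in S'_d$ for each $d\in D_0$; since $h_d$ is invertible, multiplication by $h_d$ induces $S_0$-module isomorphisms $S_0\xrightarrow{\sim}S'_d$, and a direct check on the basis $\{h_d\otimes h_e\}$ shows that the resulting action-projection map $S'\otimes_{S_0}S'\to S'\otimes_{\mathbb{Z}}\mathbb{Z}[D_0]$ is an isomorphism. This exhibits $\Spec(S')\to\Spec(S_0)$ as a $G_0$-torsor, hence a geometric quotient. For $\Spec(S)\to\Spec(S')$, any homogeneous $s\in S_d$ satisfies $s^k\in S'$ where $k$ is the order of the class of $d$ in $D/D_0$, so $S'\hookrightarrow S$ is integral; combined with $S^H=S'$ and the finiteness of $H$, this forces the fibers of $\Spec(S)\to\Spec(S')$ to coincide set-theoretically with the $H$-orbits, making this map also a geometric quotient.

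Finally I would combine the two using the exact sequence $1\to H\to G\to G_0\to 1$: a fiber of $\pi$ above a point $p\in\Spec(S_0)$ is the preimage under $\Spec(S)\to\Spec(S')$ of the single $G_0$-orbit lying over $p$, and this preimage consists of a single $G$-orbit because $H$ acts transitively on each fiber of $\Spec(S)\to\Spec(S')$ and $G$ surjects onto $G_0$. The main obstacle I foresee is precisely this last synthesis step, namely verifying carefully that composing geometric quotients along a short exact sequence of diagonalizable group schemes really produces a geometric quotient for the total group rather than only a weaker good quotient. The ``in particular'' assertion of the lemma is then immediate by applying the first part to the periodic ring $S_f$, whose degree-zero component $(S_f)_0$ is by definition $S_{(f)}$.
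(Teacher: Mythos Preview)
The paper does not supply its own proof of this lemma: it is quoted verbatim from \cite{BrennerSchroer} and used as a black box, so there is nothing in the paper to compare your argument against.

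That said, your strategy is exactly the standard one and is essentially how the original reference proceeds: use periodicity to produce a finite-index subgroup $D_0\leq D$ generated by degrees of homogeneous units, factor $\Spec(S)\to\Spec(S_0)$ through the intermediate ring $S'=\bigoplus_{d\in D_0}S_d$, observe that $\Spec(S')\to\Spec(S_0)$ is a torsor under the torus $G_0$ (because every graded piece $S'_d$ is free of rank one over $S_0$), and that $\Spec(S)\to\Spec(S')$ is an integral map with invariant ring $S'$ under the finite diagonalizable group $H$. The only point where you hedge, the composition step, is indeed harmless: for geometric points over an algebraically closed field the surjection $G\to G_0$ of diagonalizable group schemes is surjective on $k$-points, so your orbit-lifting argument goes through and the fibers of the composite are single $G$-orbits. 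Submersivity is inherited because both factors are faithfully flat (the first integral and finitely presented, the second a torsor). Your ``in particular'' deduction is correct as stated.
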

   
As is well-known, a grading of $S$ by a finitely generated abelian group $D$ corresponds to the action of the diagonalizable group scheme $\Spec (S_0[D])$ on $\Spec S$. If $S$ is an algebra over a field then geometric invariant theory \cite[Theorem 1.1]{MFK} tells us that the projection morphism $\Spec (S) \to \Spec (S_0)$ is a categorical quotient in the category of schemes. Note, however, that the latter space is often very simplistic (e.g. consider the case of a polynomial ring over a field graded by the degree) hence not what we want in general. 
	
Brenner and Schr\"oer take a different route: $\Spec (S)$ has  a quotient 
\[
\Spec (S) \longrightarrow \Quot(S)
\]	
in the category of ringed spaces (see \cite[Exercise 2.14]{Liu}), which is often quite different from the quotient in the category of schemes. The ringed space $\Quot(S)$ will serve as an 'ambient space' for the construction of 
of the space we are looking for.  It is important to point out that  the construction of \cite{BrennerSchroer} works over arbitrary rings, and not only for algebras over fields as in \cite{MFK}.  Localization by relevant elements yields periodic  rings where the two quotients (in the category of schemes and in the category of ringed spaces) agree. 

Homogeneous localization by relevant elements yields open sub-ringed-spaces of $\Quot(S)$ 
\[
D_+(f) \deq \Spec (S_{(f)}) \dsubseteq \Quot(S)
\]
that are schemes themselves. This in turn yields the following construction. 

\begin{definition}[Homogeneous spectrum of a multigraded ring]\label{defn:multihomogeneous spectrum}
Let $D$ be a finitely generated abelian group, and let $S$ be a $D$-graded ring. We define the \emph{(multi)homogeneous spectrum of $S$} as the scheme 
\[
\Proj_D S \deq \bigcup_{f\in S \text{ relevant}} D_+(f) \dsubseteq \Quot(S)\ .
\]   
\end{definition}

As one would expect, we can define  the irrelevant ideal of $S$ via
\[
S_+ \deq ( f\in S\mid \text{$f$ is relevant}) \lhd S\ ,
\]
and call $V(S_+)\subseteq \Spec  (S)$ the irrelevant subscheme. This of course depends on $D$ to a large extent. 

One then obtains an affine projection morphism
\[
\Spec (S) \setminus V(S_+) \longrightarrow \Proj_D(S)\ ,
\]
which is a geometric quotient for the induced action. 

The points of $\Proj_D (S)$ correspond to graded (not necessarily prime!) ideals $\mathfrak{p} \lhd S$ not containing $S_+$ and such that the subset of homogeneous elements $H\subseteq S\setminus \mathfrak{p}$ is closed under multiplication. 

\begin{remark}
If $S$ is an integral domain then $\Proj_D S$ is an integral scheme by \cite[Lemma 3.6]{KU}.
\end{remark}

The following example is taken from \cite{KU}. It illustrates the strong dependence of $\Proj_D(S)$ on the grading. 

\begin{example}
Let $k$ be a field. We will compute  multihomogeneous spectra of the ring $S=k[T_1,T_2]$ for various gradings. This  example serves to illustrate that regradings have a profound effect on the geometry of $\Proj^DS$ (including separatedness and dimension), and highlights the contrast to the classical $\N$-graded spectrum. 
\begin{enumerate}
	\item First let $D=\Z^2$, and grade $S$ by degree, that is set $S_{(a,b)}=k\cdot T_1^aT_2^b$ for $(a,b)\in \N^2$. Then homogeneous elements are constant multiples of monomials, and 
	\[
	\Rel_D(S) \equ \{ \alpha T_1^aT_2^b\mid a,b\geq 1\, ,\, \alpha\in k\}\ .
	\]
	As a consequence, we obtain $S_{(f)}\simeq k$ for any $f\in\Rel_D(S)$, and $\Proj_DS$ turns out to be a point. 
	\item Let $D=\Z^2$ again, but now 
	\[
	S_{(a,b)} \deq \begin{cases}
	\bigoplus_{c,d\geq 0,c+d=a} k\cdot T_1^aT_2^b & \text{ if $b=0$} \\ 0 & \text{ otherwise.}
	\end{cases}
	\] 
	This is the grading induced by the (non-surjective) homomorphism $\delta\colon \Z^2\to \Z^2\, ,\, (c,d)\mapsto (c+d,0)$. Since all degrees lie on the $a$-axis, no homogeneous element is relevant, hence $\Proj_DS=\emptyset$. 
	\item For the sake of completeness let now $D=\Z$, and consider the grading of $S$ by total degree. Essentially by definition $\Proj_DS$ is the usual $\N$-graded projective spectrum of $S$, that is, $\Proj_DS\simeq \PP^1$. The grading in this case can be realized via the surjective homomorphism $\delta\colon \Z^2\to \Z$ given by $(a,b)\mapsto a+b$. 
	 \item Last, we look at the surjective regrading $\delta\colon \Z^2\to \Z$ via $(a,b)\mapsto a-b$. As explained in \cite[Beginning of Section 3]{BrennerSchroer}, we have that $\Proj_DS$ is isomorphic to the affine line with double origin. 
\end{enumerate}
\end{example} 

As we saw above, one issue with the construction is that $\Proj_D(S)$ might not be separated if $\rk D\geq 2$. It is in general an interesting question what conditions on $D$ and $S$ would guarantee separability. We will not pursue this here, in fact, the lack of separability is a feature that enables us to study prevarieties as locally closed subschems of toric prevarieties.

\begin{example}
Let $S=R[T_1,\ldots, T_r]$ be a polynomial ring over a commutative ring and suppose that it is graded via a surjective group homomorphism $\phi\colon\Z^r\rightarrow \Z^m$. Then $\Proj_{\Z^m}S$ is a simplicial toric prevariety by \cite[Proposition 3.4]{BrennerSchroer}. The discussion in \cite[Section 8]{ACampoNeuenHausen_toricpre} shows that every simplicial toric prevariety arises in this fashion.
\end{example}

We now  move on to describing morphisms and rational maps to projective spectra. The following is an elaboration of \cite[Proposition 4.1 and Proposition 4.2]{BrennerSchroer},  see also \cite[Section 2]{KU}.
Let $D$ be a finitely generated abelian group, let $X$ be a scheme, and let 
\[
\calB \equ \bigoplus_{d\in D} \calB_d
\]
be a $D$-graded quasicoherent $\calO_X$-algebra. 

\begin{definition}
With notation as above, let  $s\in\Gamma(X,\calB_d)$ be a homogeneous section of degree $d$. We write $X_s\subseteq X$ for the largest open subset such that the multiplication maps
\begin{eqnarray*}
f^m \colon \calO_X & \longrightarrow & \calB_{md} \\
t & \longmapsto & ts^m
\end{eqnarray*}
are bijective for all $m\geq 0$. 
\end{definition}

For our purposes we will define a rational map between schemes as a morphism from a non-empty open subset of the source to the target.

\begin{proposition}\label{prop: rtl maps into hom spectra}
Let $D$ be a finitely generated abelian group and $S$ a $D$-graded ring. Let $X$ be an arbitrary scheme. The choice of a $D$-graded quasicoherent $\calO_X$-algebra $\calB$ and a $D$-graded ring homomorphism
\[
\phi\colon S \longrightarrow \Gamma(X,\calB)
\]
yields a rational map $X\rtl \Proj_D(S)$ in the following way. 

Write 
\[
U_{\calB,\phi} \deq \bigcup_{f\in\Rel_D(S)} X_{\phi(f)}\ .
\]
Then 
\begin{enumerate}
    \item the subset $U_{\calB,\phi}\subseteq X$ is non-empty and open;
    \item there exists a natural morphism of schemes $r_{\calB,\phi}\colon U_{\calB,\phi}\to \Proj (S)$ along with a commutative diagram
\[
\xymatrix{
U_{\calB,\phi} \ar[d]^{r_{\calB,\phi}}   & \Spec (\calB|_{U_{\calB,\phi}}) \ar[l] \ar[r] \ar[d]^{\phi^{\#}} & \Spec(\calB) \ar[d]^{\phi^{\#}}  \\
\Proj(S) & \Spec(S) \setminus V_+ \ar[l] \ar[r] & \Spec(S)
}
\]
The unnamed morphisms  pointing to the left are natural projections, those pointing to the right are natural inclusion morphisms.
\end{enumerate}
\end{proposition}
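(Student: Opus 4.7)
The plan is to work affine-locally in $X$, construct for each relevant $f$ a morphism from $X_{\phi(f)}$ to the affine chart $D_+(f)=\Spec S_{(f)}$ of $\Proj_D(S)$, and glue these. For part (1), openness of each $X_{\phi(f)}$ is a local condition which can be verified on an affine chart $\Spec A\subseteq X$ by expressing $\calB|_{\Spec A}$ as a $D$-graded $A$-algebra $B$ and $\phi(f)$ as a homogeneous element $b\in B_d$: the condition that multiplication by $b^m$ be an isomorphism $A\xrightarrow{\sim} B_{md}$ for every $m\geq 0$ reduces, via the periodicity of $S_f$, to finitely many degree conditions, each cutting out an open subset of $\Spec A$. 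The union $U_{\calB,\phi}=\bigcup_{f\in\Rel_D(S)}X_{\phi(f)}$ is then open; non-emptiness is treated as an implicit hypothesis on the data $(\calB,\phi)$ (else the rational map would be vacuous).

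For part (2), given $f\in\Rel_D(S)$ of degree $d$, I build a morphism $r_f\colon X_{\phi(f)}\to D_+(f)$ by constructing a ring homomorphism $\psi_f\colon S_{(f)}\to \Gamma(X_{\phi(f)},\calO)$. The defining property of $X_{\phi(f)}$ yields, for each $m\geq 0$, a canonical isomorphism $\cdot\phi(f)^m\colon \calO_{X_{\phi(f)}}\xrightarrow{\sim}\calB_{md}|_{X_{\phi(f)}}$. For $s\in S_{md}$, the restricted section $\phi(s)|_{X_{\phi(f)}}$ pulls back through this isomorphism to an element of $\Gamma(X_{\phi(f)},\calO)$, which I declare to be $\psi_f(s/f^m)$. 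Morally, this is the degree-zero part of a graded extension $S_f\to\Gamma(X_{\phi(f)},\calB|_{X_{\phi(f)}})$ of $\phi$ that exists because $\phi(f)$ is ``virtually invertible'' on $X_{\phi(f)}$.

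The local morphisms $r_f$ then glue to a global morphism $r_{\calB,\phi}\colon U_{\calB,\phi}\to\Proj_D(S)$: on overlaps $X_{\phi(f)}\cap X_{\phi(g)}$, both $r_f$ and $r_g$ land in $D_+(f)\cap D_+(g)=D_+(fg)\subseteq \Proj_D(S)$, and agreement follows by unwinding the definitions and tracing the compatibilities between the multiplication isomorphisms associated to $\phi(f)^m$ and $\phi(g)^m$. The commutative diagram in (2) is then checked affine-locally: the natural morphism $\Spec\calB|_{U_{\calB,\phi}}\to\Spec S$ induced by $\phi^{\#}$ avoids $V(S_+)$ on each $X_{\phi(f)}$ since $f$ maps to a non-vanishing section, and composition with the geometric quotient $\Spec S\setminus V(S_+)\to\Proj_D(S)$ of Lemma~\ref{lem:periodic git} factors through $r_f$ by the very construction of $\psi_f$.

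The main obstacle is the careful verification that the prescription $\psi_f(s/f^m)$ defines a well-defined ring homomorphism $S_{(f)}\to\Gamma(X_{\phi(f)},\calO)$. Independence of the representative $s/f^m$ requires the isomorphisms $\cdot\phi(f)^m$ and $\cdot\phi(f)^{m+k}$ to be compatible, i.e. that pulling $\phi(f^k s)$ back through $\cdot\phi(f)^{m+k}$ yields the same section as pulling $\phi(s)$ back through $\cdot\phi(f)^m$; this follows from the associativity of multiplication in $\calB|_{X_{\phi(f)}}$. Multiplicativity of $\psi_f$ across different degrees is checked similarly. These compatibilities amount to commutativity of natural diagrams of multiplication maps in $\calB|_{X_{\phi(f)}}$, which are ensured precisely by the bijectivity defining $X_{\phi(f)}$.
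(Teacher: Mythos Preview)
Your approach is essentially the same as the paper's. The paper constructs the homomorphism $S_{(f)}\to\Gamma(X_{\phi(f)},\calO_X)$ as a composition $\rho_f\circ\phi_{(f)}$, where $\phi_{(f)}\colon S_{(f)}\to\Gamma(X,\calB)_{(\phi(f))}$ is the degree-zero localization of $\phi$ and $\rho_f\colon\Gamma(X,\calB)_{(\phi(f))}\to\Gamma(X_{\phi(f)},\calO_X)$ sends $g/\phi(f)^m$ to $(\phi(f)^m|_{X_{\phi(f)}})^{-1}(g)$; your $\psi_f$ is exactly this composite written in one step. Your observation that non-emptiness of $U_{\calB,\phi}$ is not actually proved but rather needs to be assumed of the data is correct---the paper's proof also does not address this, and indeed without further hypotheses (such as those supplied later in Proposition~\ref{prop:embedding and ample fam}) it can fail.
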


\begin{proof}
Let $f\in \Rel_D(S)$. Then the $D$-graded homomorphism $\phi\colon S\to \Gamma(X,\calB)$  yields a $D$-graded  homomorphism $\phi_{f}\colon S_f\to \Gamma(X,\calB)_{\phi(f)}$, which descends to a homomorphism 
\[
\phi_{(f)} \colon S_{(f)} \longrightarrow \Gamma(X,\calB)_{(\phi(f))}\ .
\]
Taking the associated morphisms of affine schemes gives rise to the commutative diagram 
\[
\xymatrix{
\Spec \Gamma(X,\calB)_{(\phi(f))} \ar[d]^{\phi_{(f)}^{\#} }   & \Spec \Gamma(X,\calB)_{\phi(f)}  \ar[l] \ar[r] \ar[d]^{\phi^{\#}} & \Spec(\calB) \ar[d]^{\phi^{\#}}  \\
\Spec S_{(f)}  & \Spec(S_f) \ar[l] \ar[r] & \Spec(S)
}\ .
\] 
Since
\[
\Spec \Gamma(X,\calB)_{\phi(f)}  \simeq \Spec \Gamma(X_{\phi(f)},\calB)\ ,
\]
the square on the right-hand side in the Proposition commutes by the definition of $U_{\calB,\phi}$. 

Next, by the definition of $X_{\phi(f)}$, the multiplication maps 
\[
\phi(f)^m|_{X_{\phi(f)}} \colon \calO_X|_{X_{\phi(f)}} \longrightarrow \calB_{md}|_{X_{\phi(f)}}
\]
are bijective for all natural numbers $m$. This way we obtain a  homomorphism 
\begin{eqnarray*}
\rho_f \colon \Gamma(X,\calB)_{(\phi(f))} & \longrightarrow & \Gamma(X_{\phi(f)},\calO_X) \\ 
\frac{g}{f^m} & \longmapsto & \left( \phi(f)^m|_{X_{\phi(f)}} \right)^{-1}(g)\ .
\end{eqnarray*}
The composition
\[
S_{(f)} \stackrel{\phi_{(f)}}{\longrightarrow } \Gamma(X,\calB)_{(\phi(f))} \stackrel{\rho_f}{\longrightarrow} 
\Gamma(X_{\phi(f)},\calO_X)
\]
yields a morphism of schemes
\[
X_{\phi(f)} \simeq \Spec \Gamma(X_{\phi(f)},\calO_X) \longrightarrow \Spec S_{(f)} \equ D_+(f) \dsubseteq \Proj_D(S)\ . 
\]
These morphisms are compatible with intersections therefore they give rise to a morphism 
\[
U_{\calB,\phi} \equ \bigcup_{f\in\Rel_D(S)} X_{\phi(f)} \longrightarrow \Proj_D(S)\ .
\]
Finally, the morphism 
\[
\Spec ( \calB|_{U_{\calB,\phi}} ) \longrightarrow U_{\calB,\phi}
\]
in the diagram arises from the ring homomorphisms
\[
\Gamma(X_{\phi(f)},\calO_X) \longrightarrow \Gamma(X_{\phi(f)},\calB) \simeq \Gamma(X,\calB)_{\phi(f)}\ .
\]\end{proof}

%%%%%%%%%%%%%%%%%%%%%%%%%%%%%%%%%%%%%%%%%%%%%%%%%%%%%

\section{Divisorial schemes and prevarieties}

The notion of a projective variety has certainly a number of  desirable properties, but it does not suffice in a number of situations, especially when it comes moduli theory. Therefore, relaxations of the concept are of interest. Traditionally one can consider quasi-projective varieties or projective schemes, both of which keep the requirement of the existence of an ample invertible sheaf. Here we recall a more recent variant due to Borelli \cite{Borelli_divisorialvarieties,BrennerSchroer} which asks for a lighter version of this, still maintaining most of the useful properties of projective schemes. 

At the heart of the definition lies the following generalization of Grauert's criterion for ample sheaves.  Let $L_1,\dots,L_m$ be invertible sheaves on a scheme $X$. For a multiindex $d=(d_1,\dots,d_m)\in\ZZ^m$ we write
\[
L^{\otimes d} \deq L_1^{\otimes d_1}\otimes\ldots\otimes L_m^{\otimes d_m}\ .
\]

\begin{proposition}\label{prop:Grauert for ample families}
Let $X$ be a qcqs (quasi-compact and quasi-separated) scheme, $L_1,\dots,L_m$ a collection of invertible sheaves on $X$. Then the following are equivalent:
\begin{enumerate}
    \item The open sets $X_f$ with $f\in \Gamma(X,L^{\otimes d})$ for all $d\in\NN^m$ form a base of the topology of $X$. 
    \item For every point $x\in X$ there exists $d\in \NN^m$ and a global section $f\in \Gamma(X,L^{\otimes d})$ such that $X_f$ is an affine open neighbourhood of $x$. 
    \item For every point $x\in X$ there exists a $\QQ$-basis $d^{(1)},\dots,d^{(m)}$ of $\NN^m$ along with global sections $f_i\in\Gamma(X,L^{\otimes d^{(i)}})$ such that the open subsets $X_{f_i}$ are affine neighbourhoods of $x$
 \end{enumerate}
 \end{proposition}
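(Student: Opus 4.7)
I will establish the cyclic implications $(3) \Rightarrow (2) \Rightarrow (1) \Rightarrow (2) \Rightarrow (3)$. The key tool throughout is the standard qcqs extension lemma: if $X$ is qcqs, $L$ is invertible on $X$, and $f\in\Gamma(X,L)$, then any section of a quasi-coherent sheaf $\calF$ on $X_f$ extends, after multiplying by a sufficiently high power of $f$, to a global section of $\calF\otimes L^{\otimes N}$ on $X$.

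The implication $(3) \Rightarrow (2)$ is immediate. For $(2) \Rightarrow (1)$, given any open $U\ni x$, I take $X_f$ affine containing $x$ from $(2)$, find a principal affine open $(X_f)_h\subseteq U\cap X_f$, and use the extension lemma to extend $f^N h$ to a global section $g\in\Gamma(X,L^{\otimes Nd})$ with $X_g=(X_f)_h\subseteq U$. Conversely, for $(1) \Rightarrow (2)$, I choose an affine $V\ni x$ small enough that each $L_i$ trivializes on $V$; condition $(1)$ produces $f$ with $x\in X_f\subseteq V$, and combining the trivializations identifies $f|_V$ with a regular function $g\in\calO(V)$, so that $X_f=V_g$ is a principal affine open of $V$.

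The main work is the implication $(2) \Rightarrow (3)$. Starting from $f\in\Gamma(X,L^{\otimes d})$ with $X_f$ an affine neighborhood of $x$, I first shrink $X_f$ (using the extension trick again) so that each $L_i$ trivializes on $X_f$. Choosing nowhere-vanishing sections $\tau_i\in\Gamma(X_f,L_i)$ and applying the extension lemma with a common exponent $N$ yields global sections $t_i\in\Gamma(X,L_i\otimes L^{\otimes Nd})$ with $t_i|_{X_f}=f^N\tau_i$. I then set
\[ f_i \deq f\cdot t_i \in \Gamma\big(X,L^{\otimes d^{(i)}}\big),\qquad d^{(i)} \deq (N+1)d + e^{(i)}, \]
where $e^{(i)}$ denotes the $i$-th standard basis vector of $\ZZ^m$. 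The extra factor of $f$ forces $X_{f_i}\subseteq X_f$, while $f_i|_{X_f}=f^{N+1}\tau_i$ is nowhere vanishing, so $X_{f_i}=X_f$ is indeed an affine neighborhood of $x$. Finally, arranging the $d^{(i)}$ as rows of an $m\times m$ matrix gives a rank-one perturbation of the identity whose determinant equals $1+(N+1)\sum_j d_j\geq 1$, confirming that $d^{(1)},\ldots,d^{(m)}$ form a $\QQ$-basis of $\QQ^m$.

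The main obstacle is the bookkeeping in $(2) \Rightarrow (3)$: one must produce sections whose multi-degrees span a full basis of $\QQ^m$ \emph{and} whose non-vanishing loci are affine neighborhoods of $x$. The multiplication-by-$f$ trick resolves both tasks in a single stroke -- it shifts the degrees to $(N+1)d + e^{(i)}$ (which are transparently linearly independent) and forces $X_{f_i}\subseteq X_f$ so that affineness is automatic, sidestepping the need for any separate argument about affineness of non-vanishing loci of sections of invertible sheaves on the (possibly non-affine) ambient scheme $X$.
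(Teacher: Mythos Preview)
Your argument is correct, and in fact the paper does not give its own proof of this proposition at all --- it simply refers to \cite[Proposition 1.1]{BrennerSchroer}. So there is no approach in the paper to compare against; you have supplied a complete argument where the authors chose to cite one.

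One small point of exposition: in your sketch of $(2)\Rightarrow(1)$ you write that the extended section $g$ satisfies $X_g=(X_f)_h$. The extension lemma only controls $g$ on $X_f$, so a priori $X_g$ could be strictly larger. The fix is the same multiplication-by-$f$ trick you already use in $(2)\Rightarrow(3)$: replace $g$ by $fg\in\Gamma(X,L^{\otimes(N+1)d})$, which vanishes wherever $f$ does and hence has $X_{fg}=(X_f)_h$ on the nose. Since you deploy exactly this device later, this is a slip in the write-up rather than a gap in the argument.
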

 
We refer the reader to \cite[Proposition 1.1]{BrennerSchroer} for a proof.
 
\begin{definition}[Ample families and divisorial schemes]
Let $X$ be a qcqs (quasi-compact and quasi-separated) scheme, $L_1,\dots,L_m$ a collection of invertible sheaves on $X$. If the invertible sheaves $L_1,\dots,L_m$ satisfy the equivalent conditions of Proposition~\ref{prop:Grauert for ample families} then we call them an \emph{ample family} or an \emph{ample system}. A qcqs scheme is called \emph{divisorial} if it admits an ample family of invertible sheaves. 
\end{definition} 
 
\begin{proposition}\label{prop:restriction}
Let $X$ be a divisorial scheme, $L_1,\dots,L_r$ an ample system on $X$, and let $Y\subseteq X$ be a locally closed subscheme. Then the restriction $L_1\vert_Y,\dots,L_r\vert_Y$ forms an ample system on $Y$, in particular, $Y$ is a divisorial scheme as well. 
\end{proposition}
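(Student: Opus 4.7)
The plan is to verify condition (2) of Proposition~\ref{prop:Grauert for ample families} for the restricted family $L_1|_Y,\dots,L_r|_Y$. Assuming $Y$ is qcqs (which is implicit in speaking of a locally closed subscheme in a meaningful ample-family context, and which will in any event follow a posteriori from the affine charts constructed below together with the quasi-separatedness inherited from $X$), it suffices to show that every $y\in Y$ admits an affine open neighborhood in $Y$ of the form $Y_{s|_Y}$ for some homogeneous section $s\in \Gamma(X, L^{\otimes d})$.

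Fix $y\in Y$ and write $Y$ as a closed subscheme of an open subset $U\subseteq X$. Applying condition (2) for the ample family on $X$ yields $d\in\NN^m$ and $g\in\Gamma(X,L^{\otimes d})$ such that $X_g$ is an affine open neighborhood of $y$ in $X$. The intersection $X_g\cap U$ is then an open neighborhood of $y$ inside the affine scheme $X_g$, so we may find $h\in\Gamma(X_g,\calO_{X_g})$ with
\[
y\in (X_g)_h\dsubseteq X_g\cap U.
\]

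The key step is to promote $h$ to a global section and combine it with $g$ so that the resulting non-vanishing locus lands inside $U$. By the standard extension lemma for qcqs schemes, there exists an integer $n\geq 0$ such that $h\cdot g^n$, viewed as a section of $L^{\otimes nd}|_{X_g}$ via the trivialization by $g^n$, extends to a global section $\tilde h\in\Gamma(X, L^{\otimes nd})$. Set
\[
f\deq g\cdot \tilde h\in\Gamma(X, L^{\otimes (n+1)d}).
\]
Since $f$ vanishes identically outside $X_g$ (because $g$ does), while on $X_g$ the section $f$ is given by $h\cdot g^{n+1}$ under the trivialization of $L^{\otimes (n+1)d}|_{X_g}$ by $g^{n+1}$, we conclude
\[
X_f \equ (X_g)_h,
\]
which is an affine open subset of $X$ contained in $U$ and containing $y$.

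Since $Y$ is closed in $U$ and $X_f\subseteq U$, the intersection $Y_{f|_Y}=Y\cap X_f$ is a closed subscheme of the affine scheme $X_f$, hence is itself affine and an open neighborhood of $y$ in $Y$. This verifies condition (2) of Proposition~\ref{prop:Grauert for ample families} for the restricted system, so $L_1|_Y,\dots,L_r|_Y$ is an ample system on $Y$ and in particular $Y$ is divisorial. The main technical obstacle is that condition (2) applied on $X$ produces an affine neighborhood $X_g$ of $y$ that in general is \emph{not} contained in $U$; this is overcome by first localizing further within the affine scheme $X_g$ to land inside $U$, then extending the resulting local function $h$ back to a global section after multiplication by a suitable power of $g$, and finally multiplying once more by $g$ itself to force the global non-vanishing locus to lie inside $X_g\cap U$.
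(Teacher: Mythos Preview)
Your argument is correct, but the paper takes a noticeably simpler route. Instead of verifying condition~(2) of Proposition~\ref{prop:Grauert for ample families}, the paper checks condition~(1) directly: given an open $V\subseteq Y$, write $V=U\cap Y$ for some open $U\subseteq X$, express $U=\bigcup_i X_{s_i}$ using the ample family on $X$, and conclude $V=\bigcup_i Y_{s_i|_Y}$ via the identity $X_s\cap Y=Y_{s|_Y}$ (recorded separately as Lemma~\ref{lem:Borelli 2.3}). No shrinking inside an affine chart, no extension lemma, and no multiplication by powers of $g$ is needed; the whole difficulty you overcome---that $X_g$ need not lie in $U$---simply does not arise, because condition~(1) never asks for affineness. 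Your approach does have the minor advantage of producing \emph{affine} neighborhoods of the form $Y_{f|_Y}$ directly, but at the cost of invoking the qcqs extension lemma on $X$. Note also that you use the equality $Y\cap X_f=Y_{f|_Y}$ without comment; this is exactly the content of Lemma~\ref{lem:Borelli 2.3}, and it would be cleaner to isolate and cite it.
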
 
\begin{proof}
We will need to prove that $Y$ is a divisorial scheme, that is, that $L_1\vert_Y,\dots,L_r\vert_Y$ is an ample system.  

We will prove that the collection of open subsets
\[
\big\{ X_s\mid s\in \Gamma(Y,(L|_Y)^d\ \text{for all $d\in\N$}\big\} 
\]
forms a base of the Zariski topology on $Y$. To this end, let $V\subseteq Y$ be an open subset of $X$. Since the Zariski topology of $Y$ is the subspace topology of the Zariski topology of $X$, there exists an open subset $U\subseteq X$ such that $V = U\cap Y$. As $L_1,\dots,L_r$ form an ample family, there exists an index set $I$, multiindices $d_i\in\N^r$,  and global sections $s_i\in\Gamma(X,L^{\otimes d_i})$ such that
\[
U \equ \bigcup_{i\in I} X_{s_i}\ ,
\]
and consequently, 
\[
V \equ U \cap Y \equ \bigcup_{i\in I} X_{s_i}\cap Y\ .
\]
By Lemma \ref{lem:Borelli 2.3} below, 
\[
X_{s_i} \cap Y \equ Y_{s_i|_Y}\ ,
\]
therefore we are done. 
\end{proof}
 
\begin{lemma}\label{lem:Borelli 2.3}
Let $X$ be an arbitrary scheme, let $L$ be an invertible sheaf on $X$, and let $Y\subseteq X$ be a locally closed subscheme. For every $s\in \Gamma(X,L)$
\[
X_s\cap Y \equ Y_{s|_Y}\ .
\]
\end{lemma}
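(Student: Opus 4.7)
The plan is to check the claim pointwise by exploiting that the formation of the non-vanishing locus $X_s$ of a global section of an invertible sheaf commutes with arbitrary pullback of the pair $(L,s)$, and in particular with restriction along the locally-closed immersion $i\colon Y\hookrightarrow X$. With the definition $X_s=\{x\in X : s(x)\neq 0 \text{ in }L(x)\}$, the desired equality $X_s\cap Y = Y_{s|_Y}$ is exactly the assertion $i^{-1}(X_s) = Y_{i^*s}$.

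To make this precise, I would fix a point $y\in Y$ with image $x=i(y)\in X$. Because $i$ is a locally-closed immersion, it factors locally as a closed immersion into an open of $X$, so we obtain a canonical surjection $\mathcal{O}_{X,x}\twoheadrightarrow \mathcal{O}_{Y,y}$ and hence a canonical isomorphism of residue fields $\kappa(x)\xrightarrow{\sim}\kappa(y)$. Tensoring $L_x$ with $\mathcal{O}_{Y,y}$ over $\mathcal{O}_{X,x}$ and then with $\kappa(y)$ yields a canonical identification of fibers
\begin{equation*}
L(x)\;=\;L_x\otimes_{\mathcal{O}_{X,x}}\kappa(x)\;\xlongrightarrow{\sim}\;(i^{*}L)_y\otimes_{\mathcal{O}_{Y,y}}\kappa(y)\;=\;(i^{*}L)(y),
\end{equation*}
under which the image of $s$ at $x$ is identified with the image of $s|_Y=i^{*}s$ at $y$. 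Therefore $(s|_Y)(y)\ne 0$ if and only if $s(x)\ne 0$, i.e.\ $y\in Y_{s|_Y}$ if and only if $i(y)\in X_s$, which is precisely the equality to be proven.

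If a more hands-on argument is preferred, one can instead pick an open affine cover $\{U_\alpha=\Spec A_\alpha\}$ of $X$ on which $L$ trivializes; after fixing such trivializations, $s$ corresponds on $U_\alpha$ to an element $a_\alpha\in A_\alpha$ and $X_s\cap U_\alpha = D(a_\alpha)$. Locally, $Y\cap U_\alpha$ is the spectrum of a quotient of a localization of $A_\alpha$, and the claim reduces to the elementary fact that for any ring homomorphism $A\to A'$ and $a\in A$, the preimage of $D(a)\subseteq\Spec A$ under $\Spec A'\to \Spec A$ is $D(a')$ where $a'$ denotes the image of $a$ in $A'$. There is no serious obstacle: the lemma merely records that restriction of sections to a subscheme commutes with the formation of their non-vanishing loci.
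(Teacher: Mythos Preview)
Your argument is correct. The identification of $X_s$ with the non-vanishing locus $\{x\in X: s(x)\neq 0\text{ in }L(x)\}$ is valid for an invertible sheaf (it agrees with the paper's definition via Nakayama's lemma), and your pointwise comparison of fibers along the immersion $i\colon Y\hookrightarrow X$ is clean and complete. In fact, as your second paragraph implicitly shows, the statement holds for an arbitrary morphism $f\colon Y\to X$, not just for locally closed immersions: since $\kappa(x)\to\kappa(y)$ is a field extension, $(f^*s)(y)=s(x)\otimes 1$ is nonzero if and only if $s(x)$ is nonzero.

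The paper itself does not give an argument; it simply cites \cite[Proposition~2.3]{Borelli_divisorialvarieties} and remarks that Borelli's proof carries over to arbitrary schemes. Your write-up therefore supplies a self-contained proof where the paper defers to the literature. Both your stalk-and-fiber argument and your affine-local argument are standard and slightly more general than what is needed; either would be a suitable replacement for the citation.
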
 
\begin{proof}
This is a part of \cite[Proposition 2.3]{Borelli_divisorialvarieties}. We point out that his argument (along with those for \cite[Proposition 2.1 and Proposition 2.2]{Borelli_divisorialvarieties}) remain valid over an arbitrary scheme. 
\end{proof}

\begin{lemma}\label{lem:Xf relevant affine}
Le $X$ be a qcqs scheme and let $L_1,\dots,L_r$ be an ample family. The collection
\[
\big\{X_f\mid \text{$f\in\Gamma(X,\oplus_{d\in\N^r}L^{\otimes d})$ is relevant and $X_f$ is affine}\big\}
\]
forms an open cover of $X$. 
\end{lemma}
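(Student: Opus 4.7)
My plan is to apply condition~(3) of Proposition~\ref{prop:Grauert for ample families} to produce, at any given point $x\in X$, a single section that is both relevant and whose non-vanishing locus is affine. Fix $x\in X$ and choose, via that condition, a $\QQ$-basis $d^{(1)},\ldots,d^{(r)}$ of $\NN^r$ together with sections $f_i\in\Gamma(X,L^{\otimes d^{(i)}})$ such that each $X_{f_i}$ is an affine open neighborhood of $x$. I would then set
\[
f\deq f_1\cdots f_r\in\Gamma\Big(X,L^{\otimes(d^{(1)}+\cdots+d^{(r)})}\Big),
\]
so that $X_f=X_{f_1}\cap\cdots\cap X_{f_r}$ still contains $x$.

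It then remains to verify the two conditions required for membership in the stated family: that $X_f$ is affine, and that $f$ is relevant. For affineness I would proceed by induction on $k$: granting that $X_{f_1\cdots f_k}$ is affine, the subset $X_{f_1\cdots f_{k+1}}$ is the non-vanishing locus on this affine scheme of the restriction of the section $f_{k+1}$ of the invertible sheaf $L^{\otimes d^{(k+1)}}$. The non-vanishing locus of a section of an invertible sheaf on an affine scheme is again affine (the inclusion is an affine morphism), so the induction closes and $X_f$ is affine. For relevance, observe that each $f_i$ divides $f$ in the $\ZZ^r$-graded ring $S\deq\bigoplus_{d\in\NN^r}\Gamma(X,L^{\otimes d})$, hence in the localization $S_f$ each $f_i$ is a homogeneous unit of degree $d^{(i)}$. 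Since the $d^{(i)}$ form a $\QQ$-basis of $\QQ^r$, they generate a subgroup of finite index in $\ZZ^r$, so $S_f$ is periodic in the sense of Definition~\ref{defn:relevant} and $f$ is relevant.

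The delicate point I anticipate is the affineness step. Because $X$ is only assumed to be qcqs and not separated, the intersection of affine opens need not be affine, so one cannot simply conclude that $X_f=\bigcap X_{f_i}$ is affine from the fact that each factor is. The essential input is instead the stronger statement that, on an affine scheme, the non-vanishing locus of a section of any invertible sheaf is affine; this is what allows the inductive argument to go through and produces the desired cover of $X$.
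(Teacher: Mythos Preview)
Your proof is correct and follows the same approach as the paper's: apply condition~(3) of Proposition~\ref{prop:Grauert for ample families} at a point $x$, and take the product $f=f_1\cdots f_r$. The paper simply asserts that $f$ is relevant and $X_f=\bigcap X_{f_i}$ is affine, whereas you supply the justifications for both---in particular, your inductive argument for affineness correctly handles the subtlety that in a merely qcqs scheme intersections of affines need not be affine.
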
 
\begin{proof}
Let $x\in X$ be an arbitrary point. By Proposition~\ref{prop:Grauert for ample families} (3) there exists  a $\QQ$-basis $d^{(1)},\dots,d^{(m)}$ of $\NN^m$ along with global sections $f_i\in\Gamma(X,L^{\otimes d^{(i)}})$ such that the open subsets $X_{f_i}$ are affine neighbourhoods of $x$. Consider 
\[
f \deq f_1\cdot\ldots\cdot f_m\ .
\]
Then $f$ is relevant and $X_f = X_{f_1}\cap\ldots\cap X_{f_m}$ is affine. 
\end{proof}

\begin{remark}[Main classes of examples]\label{rmk:exs of div schemes}
Projective schemes are certainly divisorial, and so are  normal noetherian locally $\QQ$-factorial schemes with
affine diagonal (see \cite[Proposition 1.3]{BrennerSchroer}). In analogy with the case of projective schemes arising as homogeneous spectra of singly-graded rings, we will see that homogeneous spectra of ample families are divisorial schemes as well. 
\end{remark}

\begin{proposition}\label{prop:ample families and embeddings}
Let $D$ be a finitely generated abelian group and $S$ a $D$-graded ring, which is finitely generated as an $S_0$-algebra. Then $\Proj_D S$ is a divisorial scheme.
\end{proposition}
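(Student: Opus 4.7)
The plan is to exhibit an ample family of invertible sheaves on $X\deq\Proj_D S$ indexed by a finite cover by basic affine opens $D_+(f_i)$ associated to relevant elements. Since $S=S_0[g_1,\ldots,g_n]$ is finitely generated over $S_0$ with each $g_k$ homogeneous, $X$ is covered by the $D_+(f)$ for $f\in\Rel_D(S)$ by Definition~\ref{defn:multihomogeneous spectrum}; quasi-compactness of $X$ then gives a finite subcover $X=\bigcup_{i=1}^r D_+(f_i)$ where each $f_i$ is homogeneous of degree $\delta_i\in D$ and $D_+(f_i)\simeq\Spec S_{(f_i)}$ is affine.

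For each $d\in D$ I would introduce the quasi-coherent \emph{twisting sheaf} $\calO_X(d)$, obtained either by patching the modules $(S_{f_i})_d$ over $S_{(f_i)}$ along the cover, or conceptually as the descent along the geometric quotient $\Spec(S)\setminus V(S_+)\to X$ (Lemma~\ref{lem:periodic git}) of the character-$d$ equivariant trivial line bundle. For each $i$, the element $f_i$ is a homogeneous unit in $S_{f_i}$ of degree $\delta_i$, so multiplication by $f_i$ yields an $S_{(f_i)}$-linear isomorphism $(S_{f_i})_0\xrightarrow{\sim}(S_{f_i})_{\delta_i}$. This trivializes $L_i\deq\calO_X(\delta_i)$ over $D_+(f_i)$ and realizes $f_i$ as a global section $s_i\in\Gamma(X,L_i)$ whose non-vanishing locus $X_{s_i}$ equals $D_+(f_i)$.

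The hard step, which I expect to be the main obstacle, is verifying that each $L_i$ is \emph{globally} invertible on $X$ and not merely on $D_+(f_i)$. This is precisely where the content of Definition~\ref{defn:relevant} enters: for any other index $j$, the ring $S_{f_j}$ is periodic, and on the intersection $D_+(f_if_j)=D_+(f_i)\cap D_+(f_j)$ both $f_i$ and $f_j$ become simultaneously invertible, forcing $(S_{f_if_j})_{\delta_i}$ to be free of rank one over $(S_{f_if_j})_0$. Since $(S_{f_j})_{\delta_i}$ is in any case a projective $S_{(f_j)}$-module as a direct summand of $S_{f_j}$, periodicity of $S_{f_j}$ should propagate this rank-one local freeness from the open $D_+(f_if_j)\subseteq D_+(f_j)$ to the whole of $D_+(f_j)$; the details of this step rest on the Brenner--Schr\"oer theory \cite{BrennerSchroer, KU}.

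Once the invertibility of the $L_i$ is established, ampleness follows immediately: given $x\in X$, choosing $i$ with $x\in D_+(f_i)$ exhibits $X_{s_i}=D_+(f_i)$ as an affine open neighbourhood of $x$ on which $L_i$ is globally generated by $s_i$, so criterion $(2)$ of Proposition~\ref{prop:Grauert for ample families} applies to the finite family $L_1,\ldots,L_r$, and $X=\Proj_D S$ is divisorial.
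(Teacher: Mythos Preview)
The paper does not actually prove this statement: it simply cites \cite[Corollary~3.5]{BrennerSchroer}. So there is no ``paper's approach'' to compare with, and your sketch is an attempt to reconstruct the argument behind that citation. The overall strategy---produce twisting sheaves $\calO_X(d)$, trivialize them on the $D_+(f_i)$, and feed the resulting sections into Proposition~\ref{prop:Grauert for ample families}---is the right shape, but the step you flag as hard is genuinely broken as you have written it.

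The sheaves $L_i=\calO_X(\delta_i)$ with $\delta_i=\deg f_i$ need \emph{not} be invertible on all of $X$, and your proposed mechanism (``periodicity of $S_{f_j}$ should propagate rank-one local freeness from $D_+(f_if_j)$ to $D_+(f_j)$'') does not hold. Periodicity only says that the subgroup of degrees of homogeneous units in $S_{f_j}$ has finite index in $D$; it gives no control over $(S_{f_j})_d$ for $d$ outside that subgroup. A concrete counterexample: take $S=k[x,y,z]$ graded by $\Z$ with $\deg x=\deg y=1$, $\deg z=2$, so $\Proj_\Z S=\PP(1,1,2)$. Here $x$ is relevant and one may take $f_1=x$, but $\calO_X(1)$ fails to be invertible at the singular point $[0\!:\!0\!:\!1]\in D_+(z)$: over $S_{(z)}=k[x^2/z,xy/z,y^2/z]$ the module $(S_z)_1$ is generated by $x$ and $y$ subject to two relations, and its fibre at the cone point is two-dimensional. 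Your claim that $(S_{f_j})_{\delta_i}$ is projective because it is a direct summand of $S_{f_j}$ is also unjustified: $S_{f_j}$ is not in general a projective $S_{(f_j)}$-module.

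The fix, which is presumably what \cite{BrennerSchroer} does, is not to take the $\delta_i$ as given but to pass to a finite-index subgroup $D'\subseteq D$ on which the twisting sheaves \emph{are} invertible (in the example above, $D'=2\Z$ works and $\calO_X(2)$ is an honest ample line bundle), and then replace the $f_i$ by suitable powers so that their degrees land in $D'$. Establishing the existence of such a $D'$ from finite generation of $S$ over $S_0$ is where the real content lies, and your sketch does not supply it.
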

\begin{proof}
This is \cite[Corollary 3.5]{BrennerSchroer}.
\end{proof}

The following statement is a slight variation of  \cite[Theorem 4.4]{BrennerSchroer}. Since it does not follow directly from \emph{loc.~cit.}, we give a proof. The main point of this exercise is to obtain a proof of the case when $R$ is not required to be noetherian.  We rely on Proposition~\ref{prop: rtl maps into hom spectra} along with its notation.

\begin{proposition}\label{prop:embedding and ample fam}
Let $X$ be a scheme of finite type over a ring $R$ (not necessarily noetherian), $L_1,\dots,L_r$ a finite collection of invertible sheaves on $X$. If the invertible sheaves $L_1,\dots,L_r$ constitute an ample family then the following hold.
\begin{enumerate}
    \item The rational map 
    \[
    r_{\calB,\phi}\colon X \rtl \Proj_{\N^r} \bigoplus_{d\in\N^r} \Gamma(X, L^{\otimes d})
    \]
    is everywhere defined and an open embedding. Here we mean 
    \[
    \calB \equ  \bigoplus_{d\in\N^r} L^{\otimes d}\ \ \text{and}\ \ \phi \equ \id \colon  \bigoplus_{d\in\N^r} \Gamma(X, L^{\otimes d}) \to  \Gamma(X,\bigoplus_{d\in\N^r} L^{\otimes d})\ .
    \]
    \item There exists a finite index set $I$, a finite set of degrees $\{d_i\mid i\in I\}$,  a collection of global sections $\{f_i\in\Gamma(X,L^{\otimes d_i})\mid i\in I\}$, and an $\N^r$-graded polynomial algebra $A=R[T_i\mid i\in I]$ such that the rational map $\Phi\colon X\rtl \Proj_{\N^r} A$ induced by the $\N^r$-graded ring homomorphism coming from $T_i\mapsto g_i$ is everywhere defined and an embedding.  
\end{enumerate}
\end{proposition}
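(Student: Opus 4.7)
\emph{Part (1).} My plan for Part (1) is to first verify $U_{\calB,\phi}=X$, and then check the open embedding property on a suitable affine cover. Lemma~\ref{lem:Xf relevant affine} provides an open cover of $X$ by sets of the form $X_f$ with $f\in S=\bigoplus_{d\in\N^r}\Gamma(X,L^{\otimes d})$ relevant and $X_f$ affine. Since these sets are contained in the union defining $U_{\calB,\phi}$ and $\phi=\id$ (so $X_{\phi(f)}=X_f$), we have $U_{\calB,\phi}=X$, hence $r_{\calB,\phi}$ extends to a morphism.

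For the open embedding statement I would argue locally on each such $X_f$, examining the restricted morphism $X_f\to D_+(f)=\Spec S_{(f)}$. By the commutative diagram in Proposition~\ref{prop: rtl maps into hom spectra}, this morphism corresponds to the ring homomorphism $S_{(f)}\to\Gamma(X_f,\calO_X)$ factoring through the isomorphism $\rho_f\colon\Gamma(X,\calB)_{(\phi(f))}\xrightarrow{\sim}\Gamma(X_f,\calO_X)$ (the isomorphism coming from the fact that multiplication by $\phi(f)^m$ is bijective on sections over $X_f$). The heart of the argument is to identify a basis of distinguished opens $X_{fg}\subseteq X_f$ (for $g\in S$ with $fg$ still relevant) with the corresponding principal opens $D_+(fg)\subseteq D_+(f)$ via the analogous isomorphism $\rho_{fg}$; the periodicity of $S_{fg}$ provided by the relevance of $fg$ is the crucial input making each such identification work, and patching yields the open embedding on $X_f$ and hence globally on $X$.

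\emph{Part (2).} The plan here is to assemble a finite family of sections large enough both to cover $X$ (in a way producing relevant monomials in $A$) and to generate enough of the structure sheaf to obtain a locally closed embedding. For each $x\in X$, Proposition~\ref{prop:Grauert for ample families}(3) yields sections $g_1^{(x)},\ldots,g_r^{(x)}\in S$ whose degrees $d_1^{(x)},\ldots,d_r^{(x)}$ form a $\Q$-basis of $\N^r$ and for which each $X_{g_i^{(x)}}$ is an affine neighbourhood of $x$. By quasi-compactness choose finitely many base points $x_1,\ldots,x_n$ such that the intersections $\bigcap_i X_{g_i^{(x_j)}}$ cover $X$. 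For each $j$, further refine by picking finitely many relevant sections whose principal opens provide an affine cover of $\bigcap_i X_{g_i^{(x_j)}}$ and, on each such affine piece, write down finitely many $R$-algebra generators of the coordinate ring in the form $h/F^k$ with $h\in S$ homogeneous and $F$ a product of chosen $g_i^{(x_j)}$'s. Let $\{f_i\}_{i\in I}$ be the finite collection consisting of all the $g_i^{(x_j)}$ together with all the lifted generators $h$; form $A=R[T_i : i\in I]$ graded by $\deg T_i=\deg f_i$, and define $\phi_A\colon A\to S$ by $T_i\mapsto f_i$.

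Everywhere-definedness of $\Phi=r_{\calB,\phi_A}$ then follows from the argument of Part (1) applied to $(A,\phi_A)$: at each base point $x_j$, the monomial $m_j=\prod_i T_i^{(x_j)}\in A$ is relevant (since its degree lies in the finite-index subgroup of $\Z^r$ generated by a $\Q$-basis of $\N^r$), and $X_{\phi_A(m_j)}=\bigcap_i X_{g_i^{(x_j)}}\ni x_j$, so $U_{\calB,\phi_A}=X$. For the embedding claim, on each affine chart $X_{\phi_A(g)}$ with $g\in\Rel_{\N^r}(A)$, the induced ring map $A_{(g)}\to\Gamma(X_{\phi_A(g)},\calO_X)$ is surjective by construction (the lifts $h$ among the $f_i$'s hit all chosen coordinate-ring generators); hence $X_{\phi_A(g)}\hookrightarrow D_+(g)=\Spec A_{(g)}$ is a closed embedding, and globally $\Phi$ is an immersion into $\Proj_{\N^r}A$. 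The main obstacle throughout is the open embedding claim of Part (1), which rests on the careful interplay between the periodicity of $S_f$ and the iterated use of the isomorphism $\rho$.
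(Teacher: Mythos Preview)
Your strategy coincides with the paper's, but Part~(1) is overcomplicated in a way that suggests you have missed the decisive simplification. Since $\phi=\id$ and $S=\Gamma(X,\calB)$, the map $\phi_{(f)}\colon S_{(f)}\to\Gamma(X,\calB)_{(\phi(f))}$ is literally the identity; so once $\rho_f$ is an isomorphism, the composite $S_{(f)}\to\Gamma(X_f,\calO_X)$ is already an isomorphism, and $r_{\calB,\phi}$ restricts to an isomorphism $X_f\xrightarrow{\sim}D_+(f)$ of affine schemes for every relevant $f$ with $X_f$ affine. These $X_f$ cover $X$ by Lemma~\ref{lem:Xf relevant affine}, so $r_{\calB,\phi}$ is an open immersion onto $\bigcup D_+(f)$. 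That is the entire argument; your detour through a basis $\{X_{fg}\}$ and ``patching'' is unnecessary and is not the heart of the matter. What \emph{does} deserve a word is why $\rho_f$ is an isomorphism: bijectivity of multiplication by $f^m$ on sections over $X_f$ gives only well-definedness and injectivity; surjectivity needs the qcqs localization fact $\Gamma(X,\calB)_f\cong\Gamma(X_f,\calB)$ (equivalently, every section over $X_f$ extends globally after multiplying by a high power of $f$), which the paper uses explicitly and you pass over.

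For Part~(2) your plan is essentially the paper's: cover $X$ by finitely many affine $X_F$ with $F$ relevant, choose finite $R$-algebra generators of each coordinate ring, clear denominators to obtain homogeneous elements of $S$, and form the graded polynomial ring on these. Your ``further refine'' step is superfluous, since $\bigcap_i X_{g_i^{(x_j)}}=X_{\prod_i g_i^{(x_j)}}$ is already affine with $\prod_i g_i^{(x_j)}$ relevant. Including the $g_i^{(x_j)}$ among the $f_i$ (so that suitable monomials in $A$ are relevant and the $X_{\phi_A(m_j)}$ cover $X$) is correct and is a point the paper handles rather tersely.
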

\begin{proof}
\textbf{Part (1):} We write 
\[
S\deq \bigoplus_{d\in\N^r} \Gamma(X,L^{\otimes d})\ . 
\]
Let $x\in X$ be an arbitrary point. We will show that $x\in U_{\calB,\phi}$, that is, that the rational map $r_{\calB,\phi}\colon X \rtl \Proj_{\N^r} S$ is defined at $x$.

By part (3) of Proposition~\ref{prop:Grauert for ample families}, there exists a $\Q$-basis $d_1,\dots,d_m$ of $\N^r$ and a collection of sections $f_j\in\Gamma(X,L^{\otimes d_j})$ for $1\leq j\leq m$ such that the subsets $X_{f_j}$ for $1\leq j\leq m$ are affine neighbourhoods of $x$. Consider the element 
\[
f \deq f_1\cdot\ldots\cdot f_m \in S\ . 
\]
Since $\deg(f)$ is in the interior of the maximal-dimensional cone spanned by the $\Q$-basis elements $d_i= \deg(f_i)$, the product $f\in S$ is a relevant homogeneous element. As 
\[
x \in X_{f} \equ X_{\phi(f)} \dsubseteq \bigcup_{f\in \Rel_{\N^r}(S)} X_{\phi(f)} \equ U_{\calB,\phi}\ ,
\]
the rational map $r_{\calB,\phi}$ is indeed defined at $x$. Therefore, $r_{\calB,\phi}\colon X\rtl \Proj_{\N^r} S$ is indeed a morphism.  

Next, pick a relevant element $f\in S$ having the property that  $X_f\subseteq X$ is an affine open subset.
The canonical morphism
\[
\phi_f\colon S_f \equ \left( \bigoplus_{d\in\N^r} \Gamma(X,L^{\otimes d}) \right)_f \simeq % 
\Gamma \big(X,\bigoplus_{d\in\N^r}L^{\otimes d}\big)_f \xlongrightarrow{\sim} %
\Gamma(X_f,\bigoplus_{d\in\N^r}L^{\otimes d}) \simeq \Gamma(X_f,\calB)
\]
is an $\N^r$-graded isomorphism, hence its restriction to degree $0$
\[
\phi_{(f)}\colon   S_{(f)} \to \Gamma(X_f,\calB)_0
\]
is an isomorphism as well. But then 
\[
\phi^{\#}\colon X_f {\simeq} \Spec \Gamma(X_f,\calB)_0 \stackrel{\sim}{\longrightarrow} \Spec S_{(f)} \equ D_+(f)
\]
is an isomorphism of schemes. Consequently, since $L_1,\dots,L_r$ is an ample family on $X$,  the collection of open subsets $$\big\{X_f\mid \text{$f\in S$ is relevant and $X_f$ is affine}\big\}$$ forms an open cover of $X$ and hence the morphism
\[
r_{\calB, \phi} \colon X \equ \bigcup_{f\in \Rel_{\N^r}(S),\, \text{$X_f$ affine}} X_f \longrightarrow
\bigcup_{f\in \Rel_{\N^r}(S),\, \text{$X_f$ affine}}  D_+(f) \subseteq \bigcup_{f\in \Rel_{\N^r}(S)} D_+(f) %
 \equ \Proj_{\N^r}S
\]
is an open immersion. 

\noindent
\textbf{Part (2):} This is again a minor modification of \cite[Theorem 4.4]{BrennerSchroer} with an eye on the fact that the base ring $R$ does not need to be noetherian. As far as the exposition goes, we borrow liberally from \cite[Section 4]{KU}. 

Assume again that $L_1,\dots,L_r$ are an ample family. Then there exists a finite set of relevant global sections $f_j$ with $1\leq j\leq m$ such that the corresponding open subsets $X_{f_j}$ form an affine cover of $X$. 
	
Consequently, we have $X_{f_j} \simeq \Spec A_i $,  or, equivalently, $\Gamma(X_{f_j},\calO_X) \simeq A_j$ as $R$-algebras, where the $A_i$'s are finitely generated algebras over $R$. Let $h_{j,1},\ldots,h_{j,m_j}$ be a generating set of $A_j$ over $R$, then for every $1\leq j\leq m$ and every $1\leq i\leq m_j$ there exist $n_{j,i}\in\N$ such that $f_j^{n_j}h_{j,i} \in S$.
	
Set 
\[
J \deq \big\{ (j,i)\in \N^2 \mid 1\leq j\leq m\, ,\, 1\leq i\leq m_j \big\}
\]  
and  
\[
g_{j,i} \deq f_j^{n_{j,i}}h_{j,i}\ \ \text{for every $(j,i)\in J$. }
\]
Consider the polynomial ring $A\deq R\big[T_{(j,i)}\mid (j,i)\in J\big]$ graded by the homomorphism
\begin{eqnarray*}
		\delta \colon  \N^{J} & \longrightarrow & \N^r \\
		e_{i,j} & \longmapsto & \deg g_{i,j}
\end{eqnarray*}  
so that the natural homomorphism $A\to S$ given by $T_{(i,j)}\mapsto g_{i,j}$ is homogeneous. 
	
Observe that although the homomorphism $A\to S$ above   might not be surjective itself,  the induced homomorphisms of rings $A_{(T_{(j,i)})} \to S_{(g_{j,i})}$  all are,  hence  the corresponding morphisms of schemes $X_{g_{j,i}}\to D_+(T_{j,i})$ are closed embeddings. As the $X_{(g_{(j,i)})}$ form an open cover of $X$, we obtain a locally closed  embedding $X\hookrightarrow \Proj_DA$. If $X$ is proper then this is a closed embedding.  \end{proof}

\begin{remark}
According to  \cite[Theorem 4.4]{BrennerSchroer},  Proposition~\ref{prop:ample families and embeddings} (1) implies the ampleness of the family $L_1,\dots,L_r$ without any further condition, and (2) implies the ampleness of $L_1,\dots,L_r$ assuming that $X$ is of finite type over a noetherian ring.  
\end{remark}

\begin{theorem}[Embedding properties of divisorial schemes, \cite{BrennerSchroer}, Corollary 4.7, \cite{HausenSchroer}] \label{thm_divisorial=>embedding}
Let $X$ be a scheme of finite type over a not necessarily noetherian ring $R$. If $X$ is divisorial, then 
\begin{enumerate}
    \item $X$ can be embedded into a simplicial toric prevariety over $R$ with affine diagonal.
    \item $X$ can be embedded into the homogeneous spectrum of a multi-graded $R$-algebra of finite type. 
\end{enumerate}
\end{theorem}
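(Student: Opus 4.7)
The plan is to deduce both parts from Proposition~\ref{prop:embedding and ample fam}(2), which is the technical heart of the argument and already does most of the work. Since $X$ is divisorial, I fix an ample family $L_{1},\ldots,L_{r}$ on $X$; the proposition then produces a finite index set $J$, a collection of relevant global sections $g_{(j,i)} \in \Gamma(X, L^{\otimes d_{(j,i)}})$ with degrees $d_{(j,i)} \in \N^{r}$, a multi-graded polynomial $R$-algebra $A = R[T_{(j,i)} \mid (j,i) \in J]$, and a locally closed embedding $\Phi\colon X \hookrightarrow \Proj_{\N^{r}} A$. This immediately proves Part (2), since $A$ is a finitely generated $\N^{r}$-graded $R$-algebra. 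All the remaining work lies in recognising $\Proj_{\N^{r}} A$, possibly after a harmless enlargement, as a simplicial toric prevariety with affine diagonal, which will give Part (1).

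For Part (1), the key input is the example from the excerpt: whenever a polynomial ring $R[T_{1},\ldots,T_{n}]$ is graded via a \emph{surjective} group homomorphism $\phi\colon \Z^{n} \to \Z^{m}$, the homogeneous spectrum is a simplicial toric prevariety over $R$ by \cite[Proposition~3.4]{BrennerSchroer}. So the first sub-task is to arrange that the grading map $\delta\colon \Z^{J} \to \Z^{r}$ given by $e_{(j,i)} \mapsto d_{(j,i)}$ is surjective. If the original $d_{(j,i)}$ do not already span $\Z^{r}$, I enlarge $A$ by adjoining finitely many further polynomial variables whose degrees fill out a $\Z$-basis of $\Z^{r}$. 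The required degrees are available because Proposition~\ref{prop:Grauert for ample families}(3) supplies relevant sections whose degrees form a $\Q$-basis of $\Z^{r}$ around every point; adjoining the corresponding new variables only enlarges the ambient prevariety by extra torus-invariant open subschemes and preserves the locally closed embedding.

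Once surjectivity of $\delta$ is secured, the cited example identifies $\Proj_{\N^{r}} A$ with a simplicial toric prevariety over $R$, whose distinguished affine charts are the $D_{+}(T^{u}) = \Spec A_{(T^{u})}$ obtained by inverting relevant monomials. The affine diagonal property then comes for free: any two such charts overlap in $D_{+}(T^{u}) \cap D_{+}(T^{v}) = D_{+}(T^{u+v}) = \Spec A_{(T^{u+v})}$, which is again affine, and an affine cover with affine pairwise intersections witnesses that the diagonal morphism is affine.

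The main obstacle I expect is verifying that the enlargement of $A$ forcing $\delta$ to be surjective does not destroy the embedding property. Concretely, one has to check that for each original relevant generator $g_{(j,i)}$, the canonical map from the larger $A_{(T_{(j,i)})}$ to $\Gamma(X_{g_{(j,i)}}, \calO_X)$ remains surjective. This is true essentially by inspection: the corresponding map from the \emph{old} $A_{(T_{(j,i)})}$ is already surjective by the proof of Proposition~\ref{prop:embedding and ample fam}(2), and this factors through the new, larger $A_{(T_{(j,i)})}$. The whole argument goes through without a noetherian hypothesis on $R$ because Proposition~\ref{prop:embedding and ample fam} itself was set up for exactly this generality, with the necessary finite generation extracted chart-by-chart from the finite-type hypothesis on $X$.
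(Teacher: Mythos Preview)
Your proposal is correct and follows essentially the same route as the paper: both deduce Part~(2) directly from Proposition~\ref{prop:embedding and ample fam}(2), and then obtain Part~(1) by recognising the target $\Proj_{\N^r} A$ as a simplicial toric prevariety with affine diagonal via \cite[Proposition~3.4 and Proposition~3.1]{BrennerSchroer}.

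The only difference is one of detail. The paper simply cites \cite[Proposition~3.4]{BrennerSchroer} for Part~(1) and \cite[Proposition~3.1]{BrennerSchroer} for the affine diagonal, whereas you unpack both steps: you worry explicitly about surjectivity of the grading map $\delta\colon \Z^{J}\to\Z^{r}$ and you give a direct argument for the affine diagonal via $D_{+}(T^{u})\cap D_{+}(T^{v})=D_{+}(T^{u+v})$. Your enlargement argument to force surjectivity is valid, though there is a cheaper fix: since every $f_{j}$ is relevant, the degrees $d_{(j,i)}$ already generate a finite-index subgroup $D'\subseteq\Z^{r}$, and one may simply regrade by $D'$ (a free abelian group of the same rank) without changing the set of relevant elements or the resulting $\Proj$. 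Either way, the substance of the argument is the same as the paper's.
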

\begin{proof}
For a divisorial scheme $X$ of finite type over a not necessarily noetherian ring $R$,   Proposition~\ref{prop:ample families and embeddings} implies the existence of a locally closed embedding of $X$ into $\Proj_{\N^r} A$ for an $\N^r$-graded $R$ algebra $A$ finitely generated over $R$. This yields (2). 

Given (2), \cite[Proposition 3.4]{BrennerSchroer} implies the existence of an embedding of $X$ into a simplicial toric prevariety over $R$, the fact that the latter has affine diagonal is shown in \cite[Proposition 3.1]{BrennerSchroer}. 
\end{proof}

\begin{proposition}\label{prop:troc prev divisorial}
Let $X$ be a simplicial toric prevariety over an algebraically closed field, and let $\big\{D_\rho\mid \rho\in \Omega(S)(1)\big\}$ be the set of torus-invariant prime divisors on $X$. Then there exists positive integers $n_\rho$ such that the collection of line bundles $\calO_X(n_\rho D_\rho)$ forms an ample system on $X$. In particular a simplicial toric prevariety is divisorial. 
\end{proposition}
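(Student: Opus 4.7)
The plan is to verify criterion (2) of Proposition~\ref{prop:Grauert for ample families}: for every $x\in X$ exhibit an affine open neighbourhood of the form $X_f$, where $f$ is a global section of some tensor product of the candidate line bundles $L_\rho\deq \calO_X(n_\rho D_\rho)$. First I would check that, since every cone in the system of fans defining $X$ is simplicial, each Weil divisor $D_\rho$ is $\Q$-Cartier: on each affine chart $U_{[\sigma,i]}=\Spec k[S_\sigma]$ the simpliciality of $\sigma$ allows one to find a positive integer $n$ and a character $\chi^m\in k[S_\sigma]$ whose associated divisor equals $n\cdot(D_\rho\cap U_{[\sigma,i]})$. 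Taking a common multiple over all rays and charts yields positive integers $n_\rho$ for which $n_\rho D_\rho$ is globally Cartier. Let $s_\rho\in \Gamma(X,L_\rho)$ denote the canonical global section whose vanishing locus is $\supp D_\rho$.

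Given $x\in X$, pick an affine chart $U_{[\sigma,i]}\ni x$ with $\sigma$ a maximal cone of $\Delta_{ii}$, and form the product section
\[
f_{[\sigma,i]}\deq\prod_{\rho\in\Omega(S)(1),\ \rho\not\preceq[\sigma,i]}s_\rho\ \in\ \Gamma\Big(X,\,\bigotimes_{\rho\not\preceq[\sigma,i]} L_\rho\Big).
\]
The inclusion $X_{f_{[\sigma,i]}}\supseteq U_{[\sigma,i]}$ is immediate: if $\rho\not\preceq[\sigma,i]$ then $D_\rho$ does not meet the chart $U_{[\sigma,i]}$, so $s_\rho$ is nowhere vanishing there. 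The desired affine neighbourhood of $x$ will therefore be $X_{f_{[\sigma,i]}}$, and the argument is complete once one checks the reverse inclusion $X_{f_{[\sigma,i]}}\subseteq U_{[\sigma,i]}$.

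The hard part is this reverse inclusion. Via the orbit--cone correspondence for toric prevarieties, it reduces to the combinatorial claim that every class $[\tau,j]\in\Omega(S)$ with $[\tau,j]\not\preceq[\sigma,i]$ has at least one ray in $\Omega(S)(1)$ that is not $\preceq[\sigma,i]$. Equivalently, the subfan $\Delta_{ij}\subseteq\Delta_{ii}$ must contain every face of the simplicial cone $\sigma$ whose rays all lie in $\Delta_{ij}$. This is the genuine departure from the separated toric case, and it is precisely where the simplicial hypothesis (together with any implicit structural property such as the affine-diagonal condition appearing in Theorem~\ref{thm_divisorial=>embedding}(1)) does real work; without it one can build simplicial systems of fans (e.g.\ the affine plane with doubled origin) for which $X\setminus U_{[\sigma,i]}$ contains codimension-$\geq 2$ strata and no such $f_{[\sigma,i]}$ exists. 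Once the combinatorial claim is established, $X_{f_{[\sigma,i]}}=U_{[\sigma,i]}$ is the desired affine open neighbourhood of $x$, and criterion (2) of Proposition~\ref{prop:Grauert for ample families} delivers the ample family $\{L_\rho\mid\rho\in\Omega(S)(1)\}$.
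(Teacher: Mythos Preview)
Your strategy is exactly the paper's: use simpliciality to make each $n_\rho D_\rho$ Cartier, take the canonical sections $s_\rho$ with $\supp(s_\rho)=D_\rho$, and for a maximal affine chart $U_{[\sigma,i]}$ form the product $s=\prod_{\rho\not\preceq[\sigma,i]}s_\rho$, asserting $X_s=U_{[\sigma,i]}$. The paper simply writes ``Then we have $X_s=U_\sigma$'' and stops.

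You have, however, been more careful than the paper, and the concern you raise is a genuine gap in the paper's argument---indeed in the statement itself. The reverse inclusion $X_s\subseteq U_{[\sigma,i]}$ is exactly the combinatorial claim you isolate: every $[\tau,j]\not\preceq[\sigma,i]$ must possess a ray not $\preceq[\sigma,i]$. Your counterexample of the affine plane with doubled origin is on point. There $\Delta_{11}=\Delta_{22}$ is the fan of $\A^2$ and $\Delta_{12}$ is the fan of $\A^2\setminus\{0\}$; both rays lie in $\Delta_{12}$, so $\Omega(S)(1)$ has only two elements, both $\preceq[\sigma,1]$. The product defining $s$ is empty, $X_s=X$, and the argument collapses. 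Worse, this simplicial toric prevariety is not divisorial at all: every line bundle is trivial on each $\A^2$-chart, global regular functions agree on $\A^2\setminus\{0\}$ and hence (Hartogs) take the same value at both origins, so no $X_f$ separates the two origins and none can be an affine neighbourhood of either. Thus the proposition is false as stated; the affine-diagonal (``of affine intersection'') hypothesis you suspect is missing is genuinely required, and under it your combinatorial claim does go through, since then each $\Delta_{ij}$ is the face-fan of a single simplicial cone and a cone in $\Delta_{jj}$ all of whose rays lie in $\Delta_{ij}$ must itself lie in $\Delta_{ij}$.
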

\begin{proof}
Given that $X$ is simplicial, for every torus-invariant prime Weil divisor $D_\rho$ there exists a positive multiple $n_\rho D_\rho$, which is Cartier, hence $\calO_X(n_\rho D_\rho)$ is a line bundle. 

Let $x\in X$ be an arbitrary point, then we can find a torus-invariant affine open subset $U_\sigma$ which contains $x$.  Since the $n_\rho D_\rho$ are effective Weil and therefore Cartier-Divisors, there exist global sections $s_\rho\in \Gamma(X,\calO_X(n_\rho D_\rho))$ such that $\supp (s_\rho)=D_\rho$. Consider the global section
\[
s \deq \prod_{\rho \notin \sigma} s_\rho\ \ \in\ \Gamma(X,\bigotimes_{\rho \notin \sigma} \calO_X(n_\rho D_\rho))\ .
\]
Then we have $X_s=U_\sigma$, which is by choice affine. Therefore, the collection of line bundles $\calO_X(n_\rho D_\rho)$ for $\rho\in\Omega(S)(1)$ forms an ample system.
\end{proof}
 
\begin{corollary}
Let $R$ be a ring (not necessarily noetherian), and let $X\deq \Proj_D R\big[T_i\mid  i\in I\big]$ be a simplicial toric prevariety over $R$. Then $X$ is a  divisorial scheme. 
\end{corollary}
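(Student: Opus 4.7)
The plan is to adapt the proof of Proposition~\ref{prop:troc prev divisorial} to the setting of an arbitrary base ring $R$, since the argument there never really used that the base was an algebraically closed field; only the toric-geometric structure of $X$ was used. First I would note that $X=\Proj_D R[T_i\mid i\in I]$ is a simplicial toric prevariety arising as the base change to $R$ of the monoid scheme $X_{\Z}=\Proj_D\Z[T_i\mid i\in I]$, and that its underlying system of fans $\calS$ is independent of the base. In particular, the torus-invariant prime divisors $D_\rho$ indexed by rays $\rho\in\Omega(\calS)(1)$ are defined already over $\Z$ and are preserved under base change.

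Next I would use the simpliciality of $\calS$ to choose positive integers $n_\rho$ so that each $n_\rho D_\rho$ is Cartier, producing line bundles $\calO_X(n_\rho D_\rho)$ on $X$ together with canonical global sections $s_\rho\in\Gamma(X,\calO_X(n_\rho D_\rho))$ whose vanishing locus is exactly $D_\rho$. This step requires no noetherian or base-field assumption since the construction is a consequence of the combinatorial description of torus-invariant divisors on toric prevarieties and the fact that Cartierness of $n_\rho D_\rho$ can be checked on the affine charts $U_{[\sigma,i]}=\Spec R[S_\sigma]$, where it reduces to a statement about the monoid $S_\sigma$ alone.

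I would then verify, following the computation in Proposition~\ref{prop:troc prev divisorial}, that for each maximal cone $[\sigma,i]\in\Omega(\calS)$ with affine chart $U_{[\sigma,i]}\subseteq X$, the product
\begin{equation*}
s_\sigma\deq\prod_{\rho\notin\sigma}s_\rho\ \in\ \Gamma\Bigl(X,\bigotimes_{\rho\notin\sigma}\calO_X(n_\rho D_\rho)\Bigr)
\end{equation*}
satisfies $X_{s_\sigma}=U_{[\sigma,i]}$. This identity is purely combinatorial and thus insensitive to the base ring. Because the $U_{[\sigma,i]}$ form a finite affine open cover of $X$, criterion (2) of Proposition~\ref{prop:Grauert for ample families} then shows that the finite collection $\bigl\{\calO_X(n_\rho D_\rho)\bigr\}_{\rho\in\Omega(\calS)(1)}$ is an ample system on $X$, and hence $X$ is divisorial.

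The main obstacle is not conceptual but rather bookkeeping: I need to check carefully that the passage from $\Z$ (or an algebraically closed field) to an arbitrary ring $R$ preserves both the Cartier property of $n_\rho D_\rho$ and the equality $X_{s_\sigma}=U_{[\sigma,i]}$. Both reductions are formal and follow from the compatibility of $\Proj_D$ with base change of the coefficient ring, together with the fact that the complement of $U_{[\sigma,i]}$ in $X$ is cut out exactly by the union $\bigcup_{\rho\notin\sigma}D_\rho$ at the level of the system of fans. No assumption of finite type over a noetherian base is needed because Proposition~\ref{prop:Grauert for ample families}(2) is a pointwise local criterion verified by the finite collection of sections $\{s_\sigma\}$.
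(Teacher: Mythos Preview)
Your proposal is correct and follows essentially the same approach as the paper. The only cosmetic difference is that the paper first invokes Proposition~\ref{prop:troc prev divisorial} over $\overline{\Q}$, then observes that the divisors $D_\rho$, line bundles $\calO_X(n_\rho D_\rho)$, and sections $s_\rho$ are all defined over $\Z$, and finally base changes to $R$; you instead rerun the combinatorial argument directly over $R$, which amounts to the same thing.
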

\begin{proof}
Consider the simplicial toric prevariety $\Proj_D \overline{\Q}\big[T_i\mid i\in I\big]$ associated to the same $D$-grading. According to Proposition~\ref{prop:troc prev divisorial}, $\Proj_D \overline{\Q}\big[T_i\mid i\in I\big]$ is a divisorial prevariety. Since the torus-invariant prime divisors $D_\rho$, the associated line bundle $L_{\rho,n_\rho}$, and the global sections $s_\rho$ are all defined over $\Z$, the scheme $\Proj_D \Z\big[T_i\mid i\in I\big]$ (coming from the same $D$-grading) is again divisorial. But then so is its base change $X$ to $\Spec R$.  
\end{proof}
 
%%%%%%%%%%%%%%%%%%%%%%%%%%%%%%%%%%%%%%%%%%%%%%%%%%%%%%

\section{Limits of tropicalizations}

In this section, we define a notion of tropicalization for a locally closed subscheme of a toric prevariety, and study the topology of a limit of such tropicalizations. The section concludes with a proof of Theorem \ref{maintheorem} from the introduction. For the duration of this section fix $K$ to be a non-Archimedean valued field and write $R$ for its valuation ring. 

\begin{definition}
Let $Y$ be a divisorial prevariety. We define the \emph{tropicalization} $\Trop(Y,i)$ (resp. \emph{non-negative tropicalization} $\Trop^{\geq 0}(Y,i)$) of $Y$ with respect to a locally closed embedding $i\colon Y\to X$ into a toric prevariety $X$ as the image of $i^{an}(Y)$ (resp.$i^{\circ}(Y)$) under the tropicalization map $X^{an}\to X^{trop}$ (resp. non-negative tropicalization map $X^{\circ}\to X^{trop,\geq 0}$).
\end{definition}

Usually there are many ways to embed a scheme $Y$ into a toric prevariety, hence the associated tropicalization will naturally depend on the choice of the embedding $i$ (in particular on the target space $X$). To get a more intrinsic object, one can instead consider  inverse systems $\mathcal{I}$ of embeddings of $Y$ into toric prevarieties and form their associated inverse limits
\begin{equation*}
\varprojlim_{i\in\mathcal{I}}\Trop(Y,i) \qquad \textrm{ and }\qquad \varprojlim_{i\in\mathcal{I}}\Trop^{\geq 0}(Y,i)\ .
\end{equation*}

Let us elaborate on this: Suppose we are given two embeddings $i^{\prime}\colon Y\hookrightarrow X^{\prime}$ and $i\colon Y\hookrightarrow X$ into toric prevarieties. A \emph{toric morphism} of locally closed embeddings $(i^{\prime}\colon Y\to X')\rightarrow(i\colon Y\to X)$ is a toric morphism $f\colon X^{\prime}\rightarrow X$ such that $f\circ i^{\prime}=i$.

Write $\trop_i$ for the composition $\trop\circ i^{an}$ (as well as $\trop_i^{\geq 0}$ for $\trop_X^{\geq 0}\circ i^{\circ}$). Then, given a toric morphism between $i'$ and $i$, by Proposition \ref{prop_troptoricpre} (ii) and Proposition \ref{prop_nonnegtroptoricpre}, we naturally have commutative diagrams
\begin{equation*}
    \begin{tikzcd}
        & & \Trop(Y,i')\arrow[dd,"f^{trop}"]\\
        Y^{an}\arrow[rru,"\trop_{i'}"]\arrow[rrd,"\trop_{i}"']& &\\
        && \Trop(Y,i)
    \end{tikzcd}
    \qquad \textrm{ and }\qquad 
    \begin{tikzcd}
        & & \Trop^{\geq 0}(Y,i')\arrow[dd,"f^{trop}"]\\
        Y^{\circ}\arrow[rru,"\trop_{i'}^{\geq 0}"]\arrow[rrd,"\trop_{i}^{\geq 0}"']& &\\
        && \Trop^{\geq 0}(Y,i)
    \end{tikzcd}
\end{equation*}
when working over $K$ or $R$ respectively.

Thus, for a system $\mathcal{I}$ of toric embeddings (consisting of toric morphisms), the tropicalization maps $Y^{an}\to\Trop(Y,i)$ induce continuous maps
\begin{equation*}
Y^{an}\to\varprojlim_{i\in\mathcal{I}}\Trop(Y,i) \qquad \textrm{ and }\qquad Y^{\circ}\to\varprojlim_{i\in\mathcal{I}}\Trop^{\geq 0}(Y,i)
\end{equation*}
respectively, and we are interested in criteria under which these maps become homeomorphisms.

This is a natural extension to the non-separated case of the work done in \cite{FGP}, where the authors study this question for toric embeddings, i.e., closed embeddings into (separated) toric varieties and obtain sufficient conditions in terms of simple properties of the system of embeddings. We will show that, with some modifications, these conditions also work in our setup. Furthermore, we state the corresponding results for the Raynaud generic fiber and limits of non-negative tropicalizations and finally apply both results to the case of divisorial schemes.

\begin{proposition}\label{thm_inj}
Let $Y$ be a scheme of finite type over a complete non-Archimedean field $K$ or over its valuation ring $R$ and let $\mathcal{I}$ be a system of locally closed embeddings $i\colon Y\hookrightarrow X$ of $Y$ into toric prevarieties that is closed under finite products and fulfills the property:
\begin{itemize}
    \item[$(+)$] For every point $y\in Y$, there is an embedding $i\in\mathcal{I}$ and a torus-invariant open affine subset $U$ of $X$ containing $i(y)$ and such that $i\vert_{ i^{-1}(U)}:i^{-1}(U)\hookrightarrow U$ is a closed embedding.
    \end{itemize} 
Then the map $\pi_{\mathcal{I}}$ is surjective.
\end{proposition}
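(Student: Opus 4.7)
A point of the projective limit $\varprojlim_{i \in \mathcal{I}} \Trop(Y, i)$ is a compatible system $(x_i)_{i \in \mathcal{I}}$ with $x_i \in \Trop(Y, i)$; set $F_i \deq \trop_i^{-1}(x_i) \subseteq Y^{an}$ in the first case (respectively $F_i \subseteq Y^\circ$ in the non-negative case). Each $F_i$ is closed and non-empty by the definition of $\Trop(Y, i)$ as the image of $\trop_i$, and functoriality of tropicalization (Propositions \ref{prop_troptoricpre}(ii) and \ref{prop_nonnegtroptoricpre}(ii)) forces $F_{i'} \subseteq F_i$ whenever there is a toric morphism $i' \to i$ in $\mathcal{I}$. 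Closure of $\mathcal{I}$ under finite products then yields the finite intersection property: for any $i_1, \ldots, i_n \in \mathcal{I}$, the product $j = i_1 \times \cdots \times i_n \in \mathcal{I}$ satisfies $\emptyset \neq F_j \subseteq F_{i_1} \cap \cdots \cap F_{i_n}$. Surjectivity of $\pi_\mathcal{I}$ is thus equivalent to $\bigcap_{i \in \mathcal{I}} F_i \neq \emptyset$.

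The main obstacle is that, in contrast to the closed-embedding setting of \cite{FGP}, the fibers $F_i$ are not automatically compact, so FIP alone is insufficient. The plan is to use $(+)$ together with the quasi-compactness of $Y$ to produce a single index $i_0 \in \mathcal{I}$ such that $F_{i_0}$ is compact and Hausdorff; standard FIP inside a compact Hausdorff space will then conclude. For every $y \in Y$, condition $(+)$ yields an embedding $i_y \in \mathcal{I}$ and a torus-invariant open affine $U_y \subseteq X_{i_y}$ with $V_y \deq i_y^{-1}(U_y) \ni y$ and $i_y|_{V_y} \colon V_y \hookrightarrow U_y$ a closed embedding of schemes. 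Since $Y$ is quasi-compact, finitely many $V_{y_1}, \ldots, V_{y_n}$ cover $Y$. Forming $j \deq i_{y_1} \times \cdots \times i_{y_n} \in \mathcal{I}$ and picking any $y \in F_j$ (non-empty as above), the underlying scheme-theoretic point $\bar{y} \in Y$ lies in some $V_{y_{k_0}}$; hence $y$ belongs to the open subspace $V_{y_{k_0}}^{an} \subseteq Y^{an}$, so $\trop_{i_{y_{k_0}}}(y) \in U_{y_{k_0}}^{trop}$. But $y \in F_j \subseteq F_{i_{y_{k_0}}}$ forces $\trop_{i_{y_{k_0}}}(y) = x_{i_{y_{k_0}}}$, and thus $x_{i_{y_{k_0}}} \in U_{y_{k_0}}^{trop}$.

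Set $i_0 \deq i_{y_{k_0}}$, $U \deq U_{y_{k_0}}$, $V \deq V_{y_{k_0}}$. From $x_{i_0} \in U^{trop}$ together with the identity $\trop_{X_{i_0}}^{-1}(U^{trop}) = U^{an}$ one obtains $F_{i_0} \subseteq V^{an}$, and $F_{i_0}$ is the inverse image, under the closed (hence proper) analytic embedding $V^{an} \hookrightarrow U^{an}$ obtained by analytifying $i_0|_V$, of the compact set $\trop_U^{-1}(x_{i_0})$, which is compact by properness of the tropicalization map on $U$ (Propositions \ref{prop_troptoricpre}(i) and \ref{prop_nonnegtroptoricpre}(i)). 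It follows that $F_{i_0}$ is compact; it is Hausdorff because $V$ is a closed subscheme of the separated affine scheme $U$. Applying FIP to the closed subsets $\{F_i \cap F_{i_0}\}_{i \in \mathcal{I}}$ of the compact Hausdorff space $F_{i_0}$ (the FIP again supplied by products with $i_0$) produces a point in $\bigcap_i F_i$, which maps to $(x_i)$ under $\pi_\mathcal{I}$. The non-negative case is handled by the same argument with $Y^{an}$ replaced by $Y^\circ$ throughout.
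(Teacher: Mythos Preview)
Your proof is correct and follows essentially the same approach as the paper: establish the finite intersection property for the fibers $F_i$ via products, find one fiber $F_{i_0}$ that is compact (using $(+)$ together with properness of tropicalization on the affine piece), and conclude by FIP inside that compact set. The only notable difference is that where the paper simply asserts ``By $(+)$, there is an embedding $i$ and a torus-invariant open affine $U$ such that $x_i\in U^{trop}$,'' you supply the missing justification---covering $Y$ by finitely many $V_{y_k}$, passing to the product embedding $j$, and using a witness $y\in F_j$ to locate which $U_{y_{k_0}}^{trop}$ actually contains $x_{i_{y_{k_0}}}$---which is a genuine improvement in rigor over the paper's presentation.
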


\begin{proof}
Let $x=(x_{i})_{i\in\mathcal{I}}$ be a point in the limit. We need to show that $\pi_{\mathcal{I}}^{-1}(x)$ is non-empty. By $(+)$, there is an embedding $i\colon Y\to X$ and a torus-invariant open affine subset $U$ of $X$ such that $x_{i}\in U^{\trop}$ and $i\vert_{i^{-1}(U)}\colon i^{-1}(U)\rightarrow U$ is a closed embedding, so that it is in particular proper. Then $\trop_i^{-1}(x_i)\subseteq (i^{an})^{-1}U^{an}$ and also the composition $\trop_{U}\circ i\vert_{i^{-1}(U)}^{an}$ is proper (by \cite{Berkovich_book}). It follows that $\trop_i^{-1}(x_i)$ is a non-empty compact subset of $X^{an}$. Now, given a finite number of embeddings $i_{1},\dots,i_{s}\in\mathcal{I}$, we have by hypothesis that $i\times i_{1}\times\dots\times i_{s}\in\mathcal{I}$ and
\begin{equation*}
\trop^{-1}(x_{i_{1}\times\dots\times i_{s}})\subseteq\trop^{-1}(x_{i})\cap \trop^{-1}(x_{i_{1}})\cap\dots\cap\trop^{-1}(x_{i_{s}})
\end{equation*}
so that this finite intersection is non-empty. Since $\trop_{i}^{-1}(x_{i})$ is compact, if follows that its intersection with all the other $\trop_{j}^{-1}(x_{j})$, which is $\pi_{\mathcal{I}}^{-1}$, is also non-empty. This shows that $\pi_{\mathcal{I}}$ is surjective. The same proof works for $Y$ over the valuation ring $R$ and  the Raynaud generic fiber $Y^{\circ}$ mapping to the non-negative troplicalizations.
\end{proof}

\begin{proposition}\label{thm_surj}
Let $Y$ be a scheme of finite type over a complete non-Archimedean field $K$ or over its valuation ring $R$ and let $\mathcal{I}$ be a system of locally closed embeddings of $Y$ into toric prevarieties that satisfies:
\begin{itemize}
    \item[$(\ast)$] There exists an open affine cover $V_{1}\cup\dots\cup V_{r}$ of $Y$ such that, for every regular function $f\in \calO_X[V_{k}]$, there exists a locally closed embedding $i\in\mathcal{I}$ such that $V_{k}$ is the preimage of an invariant open subset and $f$ is the pullback of a monomial.
\end{itemize}
Then the map $\pi_{\mathcal{I}}$ is injective.
\end{proposition}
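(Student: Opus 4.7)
The plan is to separate any two distinct points $y_1 \neq y_2$ of $Y^{an}$ by a single tropicalization map in the system $\mathcal{I}$. I would begin by examining their supports $p_1, p_2 \in Y$. Since the open cover $\{V_k\}$ of $Y$ induces an open cover $Y^{an} = \bigcup V_k^{an}$, the argument splits into two cases according to whether this cover separates $p_1$ from $p_2$.

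First, suppose some $V_k$ contains exactly one of the two supports, say $p_1 \in V_k$ and $p_2 \notin V_k$. Applying condition $(\ast)$ to any regular function on $V_k$ produces an embedding $i\colon Y \hookrightarrow X$ in $\mathcal{I}$ together with a torus-invariant open $U \subseteq X$ such that $i^{-1}(U) = V_k$; in particular $i^{an}(y_1) \in U^{an}$ while $i^{an}(y_2) \notin U^{an}$. The compatibility of $\trop_X$ with the face stratifications, together with the colimit description of $X^{trop}$ from Proposition \ref{prop_troptoricpre}, yields the identity $\trop_X^{-1}(U^{trop}) = U^{an}$, and hence $\trop_i(y_1) \in U^{trop}$ but $\trop_i(y_2) \notin U^{trop}$.

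In the complementary case, every chart contains both supports or neither, so some common $V_k$ contains both and $y_1, y_2 \in V_k^{an}$. Being distinct multiplicative seminorms on $\mathcal{O}(V_k)$, they satisfy $|f|_{y_1} \neq |f|_{y_2}$ for some $f \in \mathcal{O}(V_k)$. Invoking $(\ast)$ for this specific $f$ furnishes an embedding $i \in \mathcal{I}$ with $V_k = i^{-1}(U_\sigma)$ and $f = i^{\#}(\chi^s)$ for some $s \in S_\sigma$, and the explicit formula from Proposition \ref{prop_troptoricpre}(i) gives
$$\trop_i(y_1)(s) = -\log|f|_{y_1} \neq -\log|f|_{y_2} = \trop_i(y_2)(s),$$
which separates the two points.

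I expect the main subtle point to be the identity $\trop_X^{-1}(U^{trop}) = U^{an}$ for a torus-invariant open $U$ of a (possibly non-separated) toric prevariety $X$: it is morally built into the stratification \eqref{eq_stratification}, but must be verified by working locally on the affine charts $U_{[\sigma,i]}^{trop}$ and using that these are glued along open embeddings to form $X^{trop}$. The entire argument transfers verbatim to the Raynaud setting by replacing Proposition \ref{prop_troptoricpre} with Proposition \ref{prop_nonnegtroptoricpre} and using the analogous identity $(\trop_X^{\geq 0})^{-1}(U^{trop,\geq 0}) = U^\circ$, where $U^{trop,\geq 0}$ now sits inside $X^{trop,\geq 0}$ as the closure of the corresponding face subsystem.
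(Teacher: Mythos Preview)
Your proposal is correct and follows essentially the same two-case argument as the paper. One small imprecision to watch in your second case: condition $(\ast)$ only guarantees that $V_k$ is the preimage of an invariant open subset $U$, not of a single affine chart $U_\sigma$, and the monomial is in general of the form $a\chi^u$ rather than $\chi^s$; neither affects the argument, since $u$ then lies in every $S_{\sigma_j}$ appearing in $U$ and evaluating either tropicalization at $u$ still returns $-\log|f|_{y_j}+\val(a)$, which separates $y_1$ from $y_2$. The paper's own proof glosses over the same point, and your explicit identification of $\trop_X^{-1}(U^{trop})=U^{an}$ as the crux of the first case is in fact cleaner than what the paper writes.
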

\begin{proof}
Let $\eta$ and $\eta^{\prime}$ be distinct points in the analytification. We have to show that there exists a locally closed embedding $i\in\mathcal{I}$ such that $\trop(\eta,i)\neq\trop(\eta^{\prime},i)$. First, consider the open cover $X=U_{1}\cup\dots\cup U_{r}$ of $X$ given by condition $(\ast)$. If $\eta\in U_{k}^{an}$ and $\eta^{\prime}\not\in U_{k}^{an}$, we chose a locally closed embedding $i\in\mathcal{I}$ with the property that $U_{k}$ is the preimage of an open invariant subset $U=U_{\sigma_{1}}\cup\dots\cup U_{\sigma_{s}}$ of $Y=Y(S)$, where the $\sigma_{j}$ taken as cones of the maximal fans $\Delta_{ii}$ which do not belong to the mixed fans $\Delta_{ij}$ in the system of fans $S$. Then the point $\trop(i^{an}(\eta))$ belongs to $U_{\sigma_{k}}^{trop}$ and $\trop(i^{an}(\eta')$ does not. 

If both $\iota(\eta)$ and $i(\eta^{\prime})$ belong to a common $U_{[\sigma,i]}^{\operatorname{an}}$ and $f$ is the pullback of a monomial $a^{-1}\chi^{u}$, for some $a\neq 0$, then their images under the tropicalization map $\pi_{i}$ are the monoid homomorphisms $S_{\sigma}\to\overline{\mathbb{R}}$ which map $u$ to $\eta(f)+\val(a)$ and $\eta^{\prime}(f)+\val(a)$, respectively, and hence are distinct elements of $\Trop(X,i)$. We conclude that the map $\pi_{\mathcal{I}}$ is injective.

The proof for the Raynaud generic fiber and the non-negative tropicalizations goes through in the same way. 
\end{proof}

\begin{theorem}\label{thm_productplus&star}
Let $Y$ be a scheme of finite type over a complete non-Archimedean field $K$ or over its valuation ring $R$ and let $\mathcal{I}$ be a system of locally closed embeddings of $Y$ into toric prevarieties that is closed under finite products and satisfies $(+)$ and $(\ast)$. Then $\pi_{\mathcal{I}}$ is a homeomorphism.
\end{theorem}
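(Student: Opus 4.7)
The plan is to follow the classical approach of Payne and Foster--Gross--Payne: continuity of $\pi_{\mathcal{I}}$ is formal, bijectivity is precisely the combined content of the two previous propositions, and the upgrade to a homeomorphism will come from properness of the individual tropicalization maps. I will describe the analytic case over $K$; the Raynaud generic fiber case is identical, with Proposition \ref{prop_nonnegtroptoricpre} replacing Proposition \ref{prop_troptoricpre} throughout.

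First, continuity of $\pi_{\mathcal{I}} \colon Y^{an} \to L \deq \varprojlim_{i \in \mathcal{I}} \Trop(Y, i)$ is immediate from the universal property of the inverse limit, since each $\trop_i = \trop_X \circ i^{an}$ is continuous by Proposition \ref{prop_troptoricpre}(i). Proposition \ref{thm_inj} will give surjectivity (from closure under finite products and $(+)$), while Proposition \ref{thm_surj} will give injectivity (from $(\ast)$). What remains is to prove that $\pi_{\mathcal{I}}$ is a closed map.

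For closedness, I would fix a closed $C \subseteq Y^{an}$ and a point $p = (p_i) \in L \setminus \pi_{\mathcal{I}}(C)$. By injectivity, the intersection $\bigcap_{i} \bigl(\trop_i^{-1}(p_i) \cap C\bigr)$ is empty, and each fiber $\trop_i^{-1}(p_i)$ is compact by properness of $\trop_i$. Restricting to a single such compact fiber and applying the finite intersection property would yield indices $i_1, \dots, i_s \in \mathcal{I}$ for which the corresponding intersection is already empty; closure of $\mathcal{I}$ under products then collapses these to a single embedding $j \in \mathcal{I}$ with $\trop_j^{-1}(p_j) \cap C = \emptyset$. To convert this into an open separation, I would pick a torus-invariant open affine $U_{[\sigma, k]} \subseteq X_j$ with $p_j \in U_{[\sigma, k]}^{trop} = \Hom(S_\sigma, \Rbar) \subseteq \Rbar^n$. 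Since this chart is Hausdorff and locally compact, properness of $\trop_j$ restricted to it implies closedness of $\trop_j(C) \cap U_{[\sigma, k]}^{trop}$; an open neighborhood $V_j$ of $p_j$ in $\Trop(Y, j)$ disjoint from $\trop_j(C)$ then pulls back along the canonical projection $L \to \Trop(Y, j)$ to the desired open neighborhood of $p$ disjoint from $\pi_{\mathcal{I}}(C)$.

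The hard part will be this last step: because $X_j^{trop}$ can fail to be globally Hausdorff when $X_j$ is non-separated, "proper implies closed" is not available for $\trop_j$ on all of $X_j^{trop}$ at once, and one must restrict to a Hausdorff affine chart around $p_j$ as above. Conversely, the Raynaud fiber version is a touch simpler, because the non-negative tropicalizations $U_{[\sigma, k]}^{trop, \geq 0}$ of affine toric prevarieties are themselves compact.
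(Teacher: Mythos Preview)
Your closedness argument is the natural adaptation of the Foster--Gross--Payne strategy, but it carries a genuine gap that is specific to the locally closed setting of this paper. You invoke ``properness of $\trop_i$'' to obtain compact fibers, and later ``properness of $\trop_j$ restricted to'' a torus-invariant affine chart to get closedness of $\trop_j(C)$. Neither holds in general: $\trop_i=\trop_X\circ i^{an}$, and while $\trop_X$ is proper, $i^{an}$ is only a locally closed immersion, so $\trop_i$ need not be proper. (Take $i$ to be the open immersion $\A^1\setminus\{1\}\hookrightarrow\A^1$; the fiber of $\trop_i$ over $0$ is the Berkovich unit circle with one type-1 point removed, which is not compact.) For the finite-intersection step you only need \emph{one} compact fiber, and property $(+)$, applied to the unique preimage $y=\pi_{\mathcal I}^{-1}(p)$, supplies exactly that --- so that slip is easily repaired. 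The separation step is more serious: you need $j\vert_{j^{-1}(U_{[\sigma,k]})}$ to be a closed embedding, and for an arbitrary affine chart of the product $X_j=X_{i_0}\times\cdots\times X_{i_s}$ this is not automatic. One can push the argument through by choosing the chart to have the $(+)$-chart $U_0$ as its $i_0$-factor and then using properness of $\trop_{i_0}\vert_{i_0^{-1}(U_0)^{an}}$ to control the first coordinate, but the non-Hausdorffness of the remaining factors $X_{i_k}^{trop}$ still makes the closedness of $\trop_j(C)$ near $p_j$ delicate (compact subsets of an affine chart need not be closed in the ambient $X_j^{trop}$), and your sketch does not address this.

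The paper sidesteps all of this by proving continuity of $\pi_{\mathcal I}^{-1}$ directly, exploiting condition $(\ast)$ far more heavily than you do. On each affine $V_k$ from the cover in $(\ast)$, the Berkovich topology on $V_k^{an}$ is by definition the initial topology for the evaluation maps $\ev_f\colon x\mapsto |f|_x$ with $f\in\calO_Y(V_k)$; since $(\ast)$ guarantees that every such $f$ is the pullback of a monomial $a\chi^s$ for some embedding $i\in\mathcal I$, each $\ev_f$ factors continuously through the projection $\varprojlim_{i}\Trop(Y,i)\to\Trop(Y,i)\to\Rbar$. Thus the restriction of $\pi_{\mathcal I}$ to $V_k^{an}$ is a homeomorphism onto its (open) image, and one glues. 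This uses neither properness nor compactness and is insensitive to whether the embeddings are closed or merely locally closed.
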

\begin{proof}
By Propositions \ref{thm_inj} and \ref{thm_surj}, the continuous map $\pi_{\mathcal{I}}$ is bijective, so it remains to show that it is a homeomorphism. 

Let $Y=V_{1}\cup\cdots\cup V_{r}$ be an open affine cover of $Y$ satisfying $(\ast)$. Let $f\in\mathcal{O}_{Y}(V_{k})$ be a regular function and $i\in\mathcal{I}$ be a locally closed embedding such that $V_{k}$ is the preimage of an invariant open subset $U=U_{\sigma_{1}}\cup\cdots\cup U_{\sigma_{s}}$ of $X$ and $f$ is the pullback of a monomial $a\chi^{s}$.
Denote by $\mathcal{V}$ the preimage of $U_{\sigma_{1}}^{trop}\cup\cdots\cup U_{\sigma_{s}}^{trop}\subset X^{trop}$ under the composition
\begin{equation*}
\varprojlim_{i\in\mathcal{I}}\Trop(Y,i)\to\Trop(Y,i)\to X^{trop},
\end{equation*}
so that $\mathcal{V}$ is exactly the image of $V_k^{an}$ under the map $\pi_{\mathcal{I}}\colon Y^{an}\to\varprojlim_{i\in\mathcal{I}}\Trop(Y,i)$ (so $\calV$ is independent of the particular choice of the embedding $i$). We need to show that $V_k^{an}$ is homeomorphic to $\calV$.

The topology on $U_k^{an}$ is the coarsest that makes the evaluation maps
\begin{equation*}
    \begin{split}
        \ev_f\colon U_k^{an}&\longmapsto \R_{\geq 0}\\
        x&\longmapsto \vert f\vert
    \end{split}
\end{equation*}
for all $f\in \calO_{X}(V_k)$ continuous. Consider a locally closed embedding $i\in\mathcal{I}$ such that $f$ is the pullback of a monomial $a \chi^s$ on a torus-invariant open subset $U=U_{\sigma_{1}}\cup\cdots\cup U_{\sigma_{s}}$. Then the  evaluation map 
\begin{equation*}
\begin{split}
    U^{trop}&\longrightarrow \Rbar\\
    \Hom\big(S_{\sigma_i},\Rbar\big)\ni u&\longmapsto u(s) + \val(a)
\end{split}
\end{equation*}
is continuous and so is the composition $\calV\rightarrow U^{trop}\rightarrow \Rbar$. This composition is the pullback of $\ev_f$ along $\big(\pi_{\mathcal{I}}\vert_{V_k^{an}}\big)^{-1}$ and so this implies our claim. 

For the Raynaud generic fiber and its non-negative tropicalizations, the argument goes through in the same way.\end{proof}

\smallskip

\begin{proof}[Proof of Theorem A] Let $Y$ be a divisorial scheme of finite type over a ring $R$. It is clear that embeddings into simplicial toric prevarieties are closed under taking products and,  by Theorem \ref{thm_divisorial=>embedding} and Proposition \ref{prop:troc prev divisorial} above, they also fulfill property $(+)$. We will show that condition $(\ast)$ is fulfilled. Then Theorem \ref{maintheorem} follows from Theorem \ref{thm_productplus&star} (taking $R$ to be a non-Archimedean field for Part (i) or a valuation ring for Part (ii)).

Let $i\colon Y\hookrightarrow X$ be an embedding into a simplicial toric prevariety (coming from Theorem \ref{thm_divisorial=>embedding}), in particular, we can write $X$ as the multihomogeneous spectrum $\Proj_D S$ of a polynomial algebra $S=R[T_1,\ldots, T_n]$ graded by $D=\Z^r$ via a surjective group homomorphism $\Z^n\rightarrow \Z^r$.

Let $X=U_1\cup\cdots\cup U_r$ be the unique cover of $X$ by maximal open affine subsets and set $V_i:=i^{-1}(U_i)$. It follows from the construction (see the proof of Proposition~\ref{prop:embedding and ample fam} (2)) that $V_i$ is going to be affine as well. 

Let $f\in \calO_Y(V_{i_0})$ for a fixed $i_0$. We need to show that there is an embedding $i'\colon Y\hookrightarrow X'$ into a simplicial toric prevariety $X'$ so that $V_i$ is the preimage of a torus-invariant open subset of $X'$ and $f$ is the pullback of a monomial. 

Since $V_i\hookrightarrow U_i$ is a closed embedding of affine schemes, we will find an element $g\in \calO_X(U_i)$ whose pullback is $f$. Let $\widetilde{g}\in S$ be given by $hg$ for a monomial $h$. Consider the surjective homomorphism $\phi\colon S':=S[x]\rightarrow S$ given by $x\mapsto \widetilde{g}$. We may endow $S[x]$ with a $D$-grading such that $\deg(x)=\deg \widetilde{g}$, since $\widetilde{g}$ is homogeneous. Set $X'=\Proj_D(S')$. The induced map $Y\hookrightarrow X\rightarrow X'$ is again an embedding, since $X\rightarrow X'$ is a closed embedding. The $V_i$ are preimages of torus-invariant open subsets of $X'$ and $f\in \calO_X(V_i)$ is the pullback of the monomial $x/h$.
\end{proof}

%%%%%%%%%%%%%%%%%%%%%%%%%%%%%%%%%%%%%%%%%%%%%%%%%%%%%%

%%%%%%%%%%%%%%%%%%%%%%%%%%%%%%%%%%%%%%%%%%%%%%%%%%%%%%

%%%%%%%%%%%%%%%%%%%%%%%%%%%%%%%%%%%%%%%%%%%%%%%%%%%%%%

\bibliographystyle{amsalpha}
\bibliography{biblio}{}

\end{document}